\theoremstyle{definition}
\newenvironment{conjecture}[2][Property]{\begin{trivlist}
\item[\hskip \labelsep {\bfseries #1}\hskip \labelsep {\bfseries \hspace{-0.5mm}#2.}]}{\end{trivlist}}
\newcommand{\descref}[1]{\hyperref[#1]{#1}}
\theoremstyle{definition}
\newtheorem* {theorem*}{Theorem}
\newtheorem{theorem}{Theorem}[section]
\newtheorem{thmdef}[theorem]{Theorem-Definition}
\newtheorem{conj}[theorem]{Conjecture}
\theoremstyle{definition}
\newtheorem* {example*}{Example}
\newtheorem{lemma}[theorem]{Lemma}
\theoremstyle{definition}
\theoremstyle{definition}
\newtheorem* {notation}{Notation}
\newtheorem{proposition}[theorem]{Proposition}
\newtheorem{corollary}[theorem]{Corollary}
\newtheorem* {remark}{Remark}
\theoremstyle{definition}
\theoremstyle{definition}
\theoremstyle{definition}
\theoremstyle{definition}
\newtheorem*{algo-p}{Algorithm for computing the polynomials $\(P_{y,w}\)_{y,w \in W}$}
\newtheorem*{algo-psig}{Algorithm for computing the polynomials $P^\sigma_{y,w}$}
\newtheorem*{algo-htilde}{Algorithm for computing the structure constants $\(  h_{x,y;z}\)_{x,y,z\in W}$}
\newtheorem*{algo-hsigma}{Algorithm for computing the structure constants $h^\sigma_{x,y;z}$}
\numberwithin{equation}{section}
\def\({\left(}
\def\){\right)}
       \newcommand{\QQ}{\mathbb{Q}}    \newcommand{\cA}{\mathcal{A}}
\def\NN{\mathbb{N}}
    \def\ZZ{\mathbb{Z}} \def\Aut{\mathrm{Aut}}  
  \def\wt{\widetilde}
\newcommand{\cM}{\mathcal{M}}
\def\barr{\begin{array}}
\def\earr{\end{array}}
\def\ba{\begin{aligned}}
\def\ea{\end{aligned}}
\def\be{\begin{equation}}
\def\ee{\end{equation}}
\def\qquand{\qquad\text{and}\qquad}
\def\quand{\quad\text{and}\quad}
\def\I{\mathbf{I}_*}
\def\M{\mathcal{M}}
\def\Des{\mathrm{Des}_L}
\def\omdef{\overset{\mathrm{def}}}
\def\hs{\hspace{0.5mm}}
\def\act{\ltimes}
\newcommand{\Psig}{P^\sigma}
\def\cH{\mathcal H}
\def\cM{\mathcal M}
\def\M{\mathcal M_{q^2}}
\def\A{A}
\def\a{a}
\def\ben{\begin{enumerate}}
\def\een{\end{enumerate}}
\renewcommand{\@makefnmark}{\mbox{\textsuperscript{}}}
\begin{document}
\title{Positivity conjectures for Kazhdan-Lusztig theory on twisted involutions: the finite case}
\author{Eric Marberg
 \\ Department of Mathematics \\ Stanford University \\ \tt{emarberg@stanford.edu}
}
\date{}

\maketitle

\begin{abstract}
Let $(W,S)$ be any Coxeter system and let $w \mapsto w^*$ be an involution of $W$ which preserves the set of simple generators $S$. Lusztig and Vogan have shown that the corresponding set of twisted involutions (i.e., elements $w \in W$ with $w^{-1} = w^*$) naturally generates a module of the Hecke algebra of $(W,S)$ with two  distinguished bases. The  transition matrix between these bases defines  a family of polynomials $P^\sigma_{y,w}$ which one can view as a ``twisted'' analogue of the much-studied family of Kazhdan-Lusztig polynomials of $(W,S)$. The polynomials  $P^\sigma_{y,w}$ can have negative coefficients, but display several conjectural positivity properties of interest, which parallel positivity properties of the Kazhdan-Lusztig polynomials. This paper reports on some calculations which verify four such positivity conjectures in several finite cases of interest, in particular for  the non-crystallographic Coxeter systems of types $H_3$ and $H_4$. 
%We also 
%%prove some general results related to Lusztig and Vogan's constructions.
%%In particular, %we clarify the sense in  the definition of their module structure involves a canonical choice of parameters. %,  this choice has no effect on the content of our conjectures.
%%which show that the content of our conjectures is formally independent of the (non-canonical) choice of module structure on the set of twisted involutions. 
% %We also 
%  show that to check our conjectural positivity properties for all Coxeter system it suffices to check them just in the cases when $(W,S)$ is irreducible.
\end{abstract}

\setcounter{tocdepth}{2}
\tableofcontents

\section{Introduction}

\subsection{Overview} \label{overview-sect}

Let $(W,S)$ be any Coxeter system, and write $\cH_{q^2}$ for the associated \emph{Hecke algebra with parameter $q^2$}: this  is  the usual Hecke algebra (namely, a certain $\ZZ[q^{\pm 1/2}]$-algebra with a basis $\(T_w\)_{w\in W}$ indexed by $W$), but with $q$ replaced by $q^2$ in its defining relations.  (A precise definition appears in Section \ref{lv-intro}.)
%
%The \emph{$q^2$-Hecke algebra} $\H_{q^2}$ 
%
%
%
Next, fix
  % is then the free $\ZZ[q^{\pm 1/2}]$-module with basis $\{T_w : w \in W\}$, with the same multiploication as the usual Hecke algebra except that in the quadratic relation we replace $q$ by $q^2$.
%\[ T_s T_w = \begin{cases} T_{sw}, &\text{if $\ell(sw)=\ell(w)+1$}, \\ q^2T_{sw} + (q^2-1)T_w,&\text{if $\ell(sw)=\ell(w)-1$},\end{cases}\qquad\text{for }s \in S,\ w\in W.\]
an automorphism  $* : W \to W$ with order one or two which preserves the set of simple generators   $S$. Write $\I$ for the corresponding set of twisted involutions (i.e., elements $w \in W$ with $w^{-1} = w^*$), 
% = \{ w \in W : w^{-1} = w^*\}\] for the corresponding set of {twisted involutions},
 and let 
$\cM_{q^2}$ be the free $\ZZ[q^{\pm 1/2}]$-module which this set generates.

Lusztig and Vogan \cite{LV2, LV1} have shown that 
$\cM_{q^2}$ naturally carries a nontrivial $\cH_{q^2}$-module structure, which gives rise to a distinguished basis of $\cM_{q^2}$ sharing many formal properties with the much-studied \emph{Kahzdan-Lusztig basis} of $\cH_{q^2}$.
In particular, the transition matrix from the standard basis of $\cM_{q^2}$ to its distinguished basis defines a family of ``twisted Kazhdan-Lusztig polynomials'' $(P^\sigma_{y,w})_{y,w \in \I} \subset \ZZ[q]$, formally similar to the \emph{Kazhdan-Lusztig polynomials} $(P_{y,w})_{y,w \in W} \subset \ZZ[q]$ attached to $(W,S)$.  The details   of these constructions are given in Sections \ref{kl-intro} and \ref{lv-intro}.

 Several remarkable properties of the Kazhdan-Lusztig basis of $\cH_{q^2}$ appear to have ``twisted'' analogues for the module $\cM_{q^2}$.
For example, one of the most famous aspects of the original Kazhdan-Lusztig polynomials $(P_{y,w})_{y,w \in W}$, only recently proved in complete generality by Elias and Williamson \cite{EWpaper}, is that their   coefficients are always  nonnegative.
%Kazhdan and Lusztig \cite{KL} proved that this holds when $W$ is crystallographic using intersection cohomology some three decades ago, and  Elias and Williamson \cite{EWpaper} have only recently discovered an algebraic proof for all Coxeter groups.
The twisted Kazhdan-Lusztig polynomials $(P^\sigma_{y,w})_{y,w \in \I}$ can have negative coefficients, but  Lusztig and Vogan \cite{LV2}   have shown by  geometric arguments   that   the modified polynomials $ \frac{1}{2} (P_{y,w} \pm  P^\sigma_{y,w})$ for $y,w \in \I$  have nonnegative  coefficients whenever $(W,S)$ is crystallographic. In fact, for any choice of $(W,S)$ and $*$,  the polynomials $\frac{1}{2} (P_{y,w} \pm  P^\sigma_{y,w})$ belong to $\ZZ[q]$ by \cite[Theorem 9.10]{LV2}, and Lusztig \cite[Conjecture 9.12]{LV2} has conjectured that their coefficients are  always  nonnegative. 

Section \ref{conj-sect} presents three additional conjectural positivity properties related to the ``Kazhdan-Lusztig basis'' of the $\cH_{q^2}$-module $\cM_{q^2}$.
%All but the last of these four properties appeared in the precedent work \cite{EM1}, where they were proved for \emph{universal} Coxeter systems (that is, Coxeter systems $(W,S)$ for which the product $st$ has order 1 or $\infty$ for each $s,t \in S$).
We prove that these positivity properties hold for arbitrary Coxeter systems if they hold for irreducible Coxeter systems, provided that analogous positivity conjectures related to the ordinary Kazhdan-Lusztig polynomials hold.
In addition, we report on some calculations performed using extensions \cite{MyCode} to du Cloux's program {\tt Coxeter} \cite{Coxeter}, which verify our four positivity properties in several finite cases (in particular, for the non-crystallographic Coxeter systems of types $H_3$ and $H_4$).
A more  detailed summary of our results  appears in Section \ref{summary-sect} at the end of this introduction, after some minimal preliminaries in Sections \ref{kl-intro}, \ref{lv-intro}, and \ref{conj-sect}.

\subsection{Kazhdan-Lusztig theory}\label{kl-intro}

 Throughout we write $\ZZ$ for the integers and $\NN = \{0,1,2,\dots\}$ for the nonnegative integers, and we adopt the following conventions:

\begin{itemize}
\item Let $(W,S)$ be a Coxeter system with length function $\ell : W \to \NN$.

%\item Given $x,y \in W$, we define $\ell(x,y) = \ell(y) - \ell(x)$.

\item Let $\leq $ denote the Bruhat order on $W$.  %Recall that in this partial order,  $y \leq w$  if and only if for each reduced expression $w = s_1\cdots s_k$ with $s_i \in S$, there are indices $1\leq i_1 < \dots < i_m \leq k$ such that  $y = s_{i_1}\cdots s_{i_m}$.

% so that if $y,w \in W$ then $y \leq w$  if and only if for each reduced expression $(s_1,\dots, s_k)$ for $w$ there are indices $1\leq i_1 \leq \dots \leq i_m \leq k$ such that $y = s_{i_1}\cdots s_{i_m}$.

\item Let $\cA = \ZZ[v,v^{-1}]$ be the ring of Laurent polynomials over $\ZZ$ in an indeterminate $v$.

\item Let $q =v^2$.  In the sequel, we will refer to $v$ in place of the parameter $q^{1/2}$   in Section \ref{overview-sect}.

\end{itemize}
%Thus $W$ is a group; $S\subset W$ is a set of elements of order two which generate $W$; $\ell(w)$ is the minimum integer $k$ such that $w = s_1s_2\cdots s_k$ for some $s_i \in S$; and
%we have $y \leq w$ for two elements $y,w \in W$ if and only if whenever $w = s_1s_2\cdots s_{\ell(w)}$ for some $s_i \in S$, there are indices $1\leq i_1 \leq i_2 \leq \dots \leq i_m \leq \ell(w)$ such that $y = s_{i_1}s_{i_2}\cdots s_{i_m}$. In particular, if $y <w $ then $\ell(y) < \ell(w)$.
%Finally, the ring $\cA$ will now occupy the role which $\ZZ[q^{\pm1/2}]$ played in the previous section. 
For background on Coxeter systems and the Bruhat order, see for example \cite{CCG,Hu,Lu}.
%; see also \cite{H1,H2,H3} for a discussion of the special properties of the poset structure on $\I$ induced by the Bruhat order.

%Here we briefly review three longstanding positivity conjectures related to the Hecke algebra of a Coxeter system. These will motivate three analogous conjectures in the next section.
Here we briefly recall the definition of the Kazhdan-Lusztig polynomials attached to $(W,S)$.
Let $\cH_q$ denote the free $\cA$-module with basis $\{ t_w : w \in W\}$. 
This module has a unique $\cA$-algebra structure  with respect to which the multiplication rule
\[ t_s t_w = \begin{cases} t_{sw} & \text{if }sw>w \\ q t_{sw} + (q-1) t_w &\text{if }sw<w \end{cases}
\] holds 
for each $s \in S$ and $w \in W$. 
We remark that  the element $t_w \in \cH_q$ is  more often denoted in the literature by the symbol $T_w$, but here we reserve the latter notation for the   Hecke algebra $\cH_{q^2}$, to be introduced in the next section.

We refer to the algebra $\cH_q$ as the \emph{Hecke algebra of $(W,S)$ with parameter $q$.} Standard references  for this much-studied object include, for example, \cite{CCG,Hu,KL,Lu}.
The Hecke algebra possesses a unique ring involution  $\overline{\ } : \cH_q \to \cH_q$ with 
$\overline{v^n} = v^{-n}$ and $\overline {t_w} = \(t_{w^{-1}}\)^{-1}$ 
%\[ \overline{ v^n} = v^{-n} \qquand \overline{   t_w} = \(t_{w^{-1}}\)^{-1}\] for 
all $n \in \ZZ$ and $w \in W$, referred to as the \emph{bar operator}, and this gives rise to the following  theorem-definition from Kazhdan and Lusztig's seminal paper \cite{KL}.% of the Kazhdan-Lusztig elements $c_w \in \cH_q$.

\begin{thmdef}[Kazhdan and Lusztig \cite{KL}]  \label{kl-thmdef} For each $w \in W$ there is a unique family of polynomials $\( P_{y,w} \)_{y \in W} \subset \ZZ[q]$ 
with the following three properties:
\ben
\item[(a)] The element $ c_w \omdef =  v^{-\ell(w)} \cdot \sum_{y \in W}   P_{y,w}\cdot t_y \in \cH_q$
has $\overline{c_w} = c_w$.
\item[(b)] $P_{y,w} = \delta_{y,w}$ if $y \not < w$ in the Bruhat order.
\item[(c)] $P_{y,w}$ has degree at most $ \frac{1}{2} \( \ell(w)-\ell(y)-1\)$  as a polynomial in $q$ whenever $y < w$.
\een
\end{thmdef}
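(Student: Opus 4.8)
The plan is to treat uniqueness and existence separately; both rest on the expansion $\overline{t_y} = v^{-2\ell(y)} t_y + \sum_{x < y} a_{x,y} t_x$ with $a_{x,y} \in \cA$, obtained by inverting the multiplication rule, of which only the leading coefficient $v^{-2\ell(y)}$ will be used.

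\emph{Uniqueness.} I would first prove a cancellation lemma: if $h = \sum_y h_y t_y \in \cH_q$ is bar-invariant and $\deg_v h_y \le -\ell(y)-1$ for every $y$ (a vacuous condition when $h_y = 0$), then $h = 0$. Otherwise pick $y$ maximal in the Bruhat order with $h_y \ne 0$; since $\overline{t_z} \in \sum_{x \le z}\cA\, t_x$, the only contribution to the $t_y$-coefficient of $\overline h$ comes from $z = y$, whence $h_y = v^{-2\ell(y)}\,\overline{h_y}$. This is impossible for $h_y \ne 0$, since the two sides force $h_y$ to have all $v$-exponents simultaneously $\le -\ell(y)-1$ and $\ge -\ell(y)+1$. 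Applying the lemma to $h = c_w - c'_w$, where $c_w$ and $c'_w$ are formed as in (a) from any two families satisfying (a)--(c): then $h$ is bar-invariant by (a), and (b)--(c) give $\deg_v\big(v^{-\ell(w)}(P_{y,w}-P'_{y,w})\big) \le -\ell(y)-1$ for all $y$, so $h = 0$ and the two families coincide.

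\emph{Existence.} I would induct on $\ell(w)$. For $w = e$ take $c_e := t_e$, which is bar-invariant and gives $P_{y,e} = \delta_{y,e}$. For $\ell(w) > 0$ choose $s \in S$ with $sw < w$; then $c_{sw}$ exists by induction, and $c_s := v^{-1}(t_e + t_s)$ is bar-invariant (using $t_s^{-1} = v^{-2}t_s + (v^{-2}-1)t_e$), so $c_s c_{sw}$ is bar-invariant. Expanding $c_s c_{sw} = v^{-1}(c_{sw} + t_s c_{sw}) = \sum_y g_y t_y$ via the two branches of the rule $t_s t_x \in \{\, t_{sx},\ q t_{sx} + (q-1)t_x \,\}$ and the inductive degree bounds on $(P_{\bullet, sw})$, I would verify: $g_y = 0$ unless $y \le w$; $g_w = v^{-\ell(w)}$; $\deg_v g_y \le -\ell(y)-1$ whenever $sy > y$ and $y \ne w$; and $g_y - \mu_y v^{-\ell(y)}$ has $\deg_v \le -\ell(y)-1$ whenever $sy < y$ and $y \ne w$, where $\mu_y := [v^{-\ell(y)}]g_y \in \ZZ$ equals the coefficient of $q^{\frac12(\ell(sw)-\ell(y)-1)}$ in $P_{y,sw}$ and vanishes unless $y < sw$. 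Then set
\[
c_w \;:=\; c_s c_{sw} \;-\; \sum_{\substack{y < sw \\ sy < y}} \mu_y\, c_y ,
\]
a bar-invariant element because each $\mu_y \in \ZZ$. Defining $P_{y,w}$ by $c_w = v^{-\ell(w)}\sum_y P_{y,w} t_y$: property (b) follows since $g_x$ and each $P_{x,y}$ occurring vanish unless $x \le w$, with $t_w$-coefficient $v^{-\ell(w)}$; property (c) follows since the subtraction removes exactly the offending $v^{-\ell(y)}$ monomials of $c_s c_{sw}$ while, by the inductive bound $\deg_v(v^{-\ell(y)}P_{x,y}) \le -\ell(x)-1$ for $x < y$, introducing nothing of excess degree; and $P_{y,w} \in \ZZ[q]$ follows from a parity count (all $v$-exponents occurring in $c_w$ lie in $\ell(w) + 2\ZZ$ and are bounded below by $-\ell(w)$).

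The main obstacle is the degree bookkeeping in the inductive step: one must show precisely that the failure of (c) for $c_s c_{sw}$ is confined to the single top-degree coefficient $v^{-\ell(y)}$ at indices $y \ne w$ with $sy < y$, and that $\sum_y \mu_y c_y$ cancels these and nothing else of excess degree. This forces one to play the two cases of the multiplication rule against the inductive hypotheses on $\deg_v P_{\bullet, sw}$ and on $\deg_v P_{\bullet, y}$; the remaining verifications are formal, using standard properties of the Bruhat order (in particular the lifting property, which ensures that all indices appearing lie below $w$).
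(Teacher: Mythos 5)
Your proposal is correct and is essentially the classical argument of Kazhdan--Lusztig, which is exactly what the paper relies on: it states this result as a Theorem-Definition with a citation to \cite{KL} and gives no proof of its own. Both your uniqueness step (the cancellation lemma for bar-invariant elements whose coefficients satisfy $\deg_v h_y \le -\ell(y)-1$, using the triangularity of $\overline{t_y}$ with leading term $v^{-2\ell(y)}t_y$) and your existence step (induction on $\ell(w)$ via $c_w = c_s c_{sw} - \sum_{sz<z<sw}\mu(z,sw)\,c_z$, with the degree bookkeeping you describe) match the standard proof, so there is nothing to correct.
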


\begin{remark} Here and in the sequel,  the Kronecker delta $\delta_{y,w}$ has the usual meaning of   $\delta_{y,w} =1$ if $y=w$ and $\delta_{y,w} = 0$ otherwise. 
\end{remark}

%In part (b)  the Kronecker delta $\delta_{y,w}$ has the usual meaning:   $\delta_{y,w} =1$ if $y,w$ and $\delta_{y,w} = 0$ otherwise. 
The polynomials $(P_{y,w})_{y,w \in W}$ are the \emph{Kazhdan-Lusztig polynomials} of the Coxeter system $(W,S)$. Property (b) implies that the  
elements $\( c_w \)_{ w \in W}$ form an $\cA$-basis for  $\cH_q$, which one calls the \emph{Kazhdan-Lusztig basis}. %\qquad\text{for }x,y,z \in W.\]
%
%
%Given $y,w \in W$, we define 
%\[ \mu(y,w)=\text{ the coefficient of $v^{\ell(w)-\ell(y)-1}$ in the polynomial $P_{y,w} \in \ZZ[v^2]$.}\] 
We note the following well-known multiplication formula for use later.

\begin{theorem}[Kazhdan and Lusztig \cite{KL}] \label{kl-thm} Let $w \in W$ and $s \in S$. Then $c_s = v^{-1}(t_s+1)$ and
\[ c_s c_w = \begin{cases} (v+v^{-1}) c_w&\text{if $sw<w$} \\[-10pt]\\
 c_{sw} + \sum_{{z \in W ;\hs sz<z<w}} \mu(z,w) c_z&\text{if $sw>w$}\end{cases}\]
 where 
$ \mu(z,w)$ denotes the coefficient of $v^{\ell(w)-\ell(z)-1}$ in the polynomial $P_{z,w} \in \ZZ[v^2]$.
 \end{theorem}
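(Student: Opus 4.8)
The plan is to derive both assertions from the uniqueness characterization in Theorem-Definition~\ref{kl-thmdef} together with the fact that the bar operator is a ring involution of $\cH_q$. I would begin with the formula $c_s = v^{-1}(t_s+1)$: in any Coxeter system the only element strictly below a simple generator $s$ is the identity $e$, so property~(b) of Theorem-Definition~\ref{kl-thmdef} gives $c_s = v^{-1}(P_{e,s}\,t_e + t_s)$ and property~(c) forces $P_{e,s}$ to be a constant $a\in\ZZ$; from the quadratic relation $t_s^2 = q\,t_e + (q-1)t_s$ one computes $\overline{t_s} = (t_s)^{-1} = v^{-2}t_s + (v^{-2}-1)t_e$, and imposing $\overline{c_s} = c_s$ reduces to $a(v-v^{-1}) = v-v^{-1}$, whence $a = 1$. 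The identity $(t_s+1)^2 = (q+1)(t_s+1)$ in $\cH_q$ then gives $c_s^2 = (v+v^{-1})c_s$, which will serve as the base case for the multiplication rule.

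Next I would treat the case $sw > w$. Since $c_s$ and $c_w$ are bar-invariant and the bar operator is multiplicative, $c_s c_w$ is bar-invariant. Writing $v^{\ell(w)+1}c_s c_w = \sum_y P_{y,w}(t_s+1)t_y$, using $t_st_w = t_{sw}$, and using that $y < w$ together with $w < sw$ forces $sy < sw$ in the Bruhat order, one sees that $v^{\ell(sw)}c_sc_w = t_{sw} + (\text{an }\cA\text{-linear combination of the }t_x\text{ with }x<sw)$; since the change of basis between $\{t_x\}_{x\in W}$ and $\{v^{\ell(u)}c_u\}_{u\in W}$ is unitriangular for the Bruhat order, this gives $c_sc_w = c_{sw} + \sum_{z<sw}a_z c_z$ for some $a_z\in\cA$, and bar-invariance of both sides together with linear independence of the $c_z$ forces every $a_z$ to be a bar-invariant Laurent polynomial, i.e.\ an integer. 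To identify $a_z$, compare the coefficient of $t_z$ on the two sides: on the product side it is $v^{-\ell(w)-1}$ times $P_{z,w}+qP_{sz,w}$ if $sz>z$ and $qP_{z,w}+P_{sz,w}$ if $sz<z$ (the two contributions coming from the standard vectors $t_z$ and $t_{sz}$), while on the other side it is $v^{-\ell(sw)}P_{z,sw} + a_z v^{-\ell(z)} + \sum_{z<u<sw}a_u v^{-\ell(u)}P_{z,u}$. Multiplying through by $v^{\ell(sw)}$ and reading off the coefficient of $v^{\ell(w)+1-\ell(z)}$, the degree bound~(c) makes $P_{z,sw}$ (which has $v$-degree at most $\ell(w)-\ell(z)$) and each cross-term $a_u v^{\ell(w)+1-\ell(u)}P_{z,u}$ (since $\deg_v P_{z,u} < \ell(u)-\ell(z)$ when $z<u$) disappear, whereas the product side contributes $0$ when $sz>z$ and the coefficient of $v^{\ell(w)-\ell(z)-1}$ in $P_{z,w}$ — namely $\mu(z,w)$ — when $sz<z$. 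Thus $a_z = \mu(z,w)$ if $sz<z$ and $a_z = 0$ otherwise; since $\mu(z,w) = 0$ unless $z<w$, this is exactly the asserted identity.

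For the case $sw < w$ I would induct on $\ell(w)$, the base case $w = s$ being supplied by $c_s^2 = (v+v^{-1})c_s$. Applying the case just proved to the pair $(s,sw)$ — valid since $s(sw) = w > sw$ — gives $c_sc_{sw} = c_w + \sum_{z:\,sz<z<sw}\mu(z,sw)c_z$; multiplying on the left by $c_s$, the left side becomes $(v+v^{-1})c_sc_{sw}$ by $c_s^2 = (v+v^{-1})c_s$, while on the right each $c_sc_z$ that occurs has $sz<z$ and $\ell(z)<\ell(w)$, hence equals $(v+v^{-1})c_z$ by the inductive hypothesis. Cancelling the common terms leaves $c_sc_w = (v+v^{-1})c_w$.

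The main obstacle is the coefficient identification in the case $sw>w$: everything else is either formal or a two-line induction. The delicate point there is that bar-invariance alone only shows the $a_z$ are constants while the degree bounds alone do not determine them, so the two must be used together, and one must check that the Bruhat-order inequalities involved ($y<w\Rightarrow sy<sw$, and $\deg_v P_{z,u} < \ell(u)-\ell(z)$ for $z<u$) are sharp enough to make every unwanted contribution vanish.
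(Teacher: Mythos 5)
The paper itself gives no proof of this statement---it is quoted from Kazhdan--Lusztig \cite{KL}---so I compare your argument with the standard one. Your derivation of $c_s=v^{-1}(t_s+1)$, the unitriangular expansion $c_sc_w=c_{sw}+\sum_{z<sw}a_zc_z$, the coefficient-of-$t_z$ identity, and the reduction of the case $sw<w$ to the case $sw>w$ via $c_s^2=(v+v^{-1})c_s$ and induction are all correct and follow the classical route. However, there is a genuine gap at the key step: you assert that bar-invariance forces each $a_z$ to be ``a bar-invariant Laurent polynomial, i.e.\ an integer.'' Bar-invariance only says $a_z(v)=a_z(v^{-1})$; the bar-invariant elements of $\cA$ are the symmetric Laurent polynomials such as $v+v^{-1}$, not just the integers (and your closing remark that ``bar-invariance alone only shows the $a_z$ are constants'' repeats the same error). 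This matters twice in your coefficient extraction: to kill the cross terms $a_u v^{\ell(sw)-\ell(u)}P_{z,u}$ in degree $\ell(sw)-\ell(z)$ you need $\deg_v a_u\leq 0$, and to conclude $a_z=\mu(z,w)$ from the single coefficient of $v^{\ell(sw)-\ell(z)}$ you need $a_z$ itself to be constant; neither follows from symmetry alone.

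The missing ingredient is a degree bound complementing the symmetry, and it is easy to supply. One repair is a downward induction on $z$ in the Bruhat order: once $a_u\in\ZZ$ is known for all $u$ with $z<u<sw$, your displayed identity shows that $a_zv^{\ell(sw)-\ell(z)}$ has $v$-degree at most $\ell(sw)-\ell(z)$ (the product side has degree at most $\ell(sw)-\ell(z)$, while $P_{z,sw}$ and the cross terms have strictly smaller degree), so $a_z\in\ZZ[v^{-1}]$; combined with $a_z(v)=a_z(v^{-1})$ this gives $a_z\in\ZZ$, after which your top-coefficient computation correctly yields $a_z=\mu(z,w)$ if $sz<z$ and $a_z=0$ otherwise, and $\mu(z,w)\neq 0$ forces $z<w$. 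Alternatively, observe that the coefficient of $t_z$ in $c_sc_w$ is $v^{-\ell(w)-1}$ times a polynomial of $v$-degree at most $\ell(sw)-\ell(z)$, so $c_sc_w$ lies in the $\ZZ[v^{-1}]$-lattice $\bigoplus_y\ZZ[v^{-1}]\,v^{-\ell(y)}t_y$, of which the elements $c_u$ form a $\ZZ[v^{-1}]$-basis by the triangularity and degree conditions of Theorem-Definition \ref{kl-thmdef}; hence $a_z\in\ZZ[v^{-1}]$ for all $z$ simultaneously, and symmetry again gives $a_z\in\ZZ$. With either repair inserted, your argument is complete and is essentially the classical proof.
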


%\subsection{Twisted Kazhdan-Lusztig theory}\label{twisted-intro}

%The present work focuses not on Conjectures \descref{A}, \descref{B}, and \descref{C} but rather on  a set of analogous conjectures for a certain Hecke algebra module, which we  describe in  the present section.

%Above we defined an $\cA$-algebra $\cH_q$ possessing two $\cA$-bases indexed by $W$: the standard basis $\( t_w\)_{w \in W}$ and the Kazhdan-Lusztig basis $\(c_w\)_{w \in W}$. Here 

\subsection{Twisted Kazhdan-Lusztig theory}\label{lv-intro}

Following \cite{LV2,LV1}, we now introduce a slightly different Hecke algebra $\cH_{q^2}$, possessing an analogous pair of $W$-indexed $\cA$-bases
 which we will  denote using capital letters by  $\(T_w\)_{w \in W}$ and  $\(C_w\)_{w \in W}$. %Informally $\cH_{q^2}$ is given by just replacing $q$ by $q^2$ in the defining relations of $\cH_q$. The use of the parameter $q^2$ is not essential to what follows, but simplifies several statements and makes the ``twisted'' theory we are about to describe more closely parallel the constructions in the previous section.
Explicitly,
 let $\cH_{q^2}$ denote the free $\cA$-module with basis $\{ T_w : w \in W\}$. 
This module has a unique $\cA$-algebra structure  with respect to the slightly altered multiplication rule
\[ T_s T_w = \begin{cases} T_{sw} & \text{if }sw>w \\ q^2 T_{sw} + (q^2-1) T_w &\text{if }sw<w \end{cases}
\] holds 
for each $s \in S$ and $w \in W$.  We refer to $\cH_{q^2}$ with this structure as the \emph{Hecke algebra of $(W,S)$ with parameter $q^2$.}
This algebra likewise possesses a unique ring involution  $\overline{\ } : \cH_{q^2} \to \cH_{q^2}$ with 
$\overline{v^n} = v^{-n}$ and $\overline {T_w} = \(T_{w^{-1}}\)^{-1}$
%\[\overline{ v^n} = v^{-n} \qquand \overline{   T_w} = \(T_{w^{-1}}\)^{-1}\] 
for all $n \in \ZZ$ and $w \in W$, which fixes each of the elements
 \[C_w \omdef = q^{-\ell(w)} \cdot \sum_{y \in W} P_{y,w}(q^2) \cdot T_y \in \cH_{q^2}\qquad\text{for }w \in W.\] The elements  $\( C_w \)_{ w \in W}$ form an $\cA$-basis of $\cH_{q^2}$ which one refers to as its \emph{Kazhdan-Lusztig basis}.

The construction which is the main topic of this work is now given as follows.
Choose an automorphism $w \mapsto w^*$ of $W$ with order $\leq 2$ such that $s^* \in S$ for each $s \in S$,
and
write $\I$ for the corresponding set of \emph{twisted involutions}
\[\I = \{ w \in W : w^* = w^{-1}\}.\]
%Lusztig and Vogan \cite{LV2,LV1} have shown that the $\cA$-module generated by $\I$ has a certain nontrivial $\cH_{q^2}$-module structure, with its own notion of a ``bar operator,'' and with a distinguished bar invariant $\cA$-basis sharing many formal properties with the Kazhdan-Lusztig basis of $\cH_q$. %This  distinguished basis is our central object of study.
Lusztig and Vogan's paper \cite{LV1} first established the following trio of Theorem-Definitions in the  case that $W$ is a Weyl group or affine Weyl group and $*$ is trivial; Lusztig's paper \cite{LV2} then extended these results to arbitrary Coxeter systems. %, this result appeared first in Lusztig and Vogan's paper \cite{LV1}.

\begin{notation}
Given $s \in S$ and $ w\in \I$,   let $s \act w$ denote the unique element in the intersection of $\{ sw,sws^*\}$ with $\I\setminus \{w\}$. Explicitly, $s\act w = sw = ws^*$ if $w=sws^*$ and $s\act w = sws^*$ otherwise. 
%Note that while $s \act (s \act w) = w$, the operation $\act : S \times \I\to \I$ generally does not extend to a group action of $W$ on $\I$.
\end{notation}

\begin{thmdef}[Lusztig and Vogan \cite{LV1}; Lusztig \cite{LV2}]\label{lv-thmdef1}
Let $\M$ be the free $\cA$-module with basis $\{ \a_w : w \in \I\}$.  Then  $\M$ has a unique $\cH_{q^2}$-module structure with respect to which the following multiplication rule holds for each  $s \in S$ and $w \in \I$:
\[ T_s \a_w =
\left\{
\ba 
a_{s\act w} \ {\color{white}+}\ &  		&&\quad\text{if $s\act w =sws^* > w$} \\
(q+1)a_{s\act w} \ +\ & qa_w			&&\quad\text{if $s\act w =sw > w$} \\
(q^2-q)a_{s\act w} \ +\ &(q^2-q-1) a_w 	&&\quad\text{if $s\act w =sw < w$} \\
q^2a_{s\act w} \ +\  & (q^2-1)a_w 		&&\quad\text{if $s\act w =sws^* < w$.}
\ea
\right.
\]
\end{thmdef}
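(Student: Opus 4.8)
Since the elements $T_s$, $s\in S$, generate $\cH_{q^2}$ as an $\cA$-algebra, any $\cH_{q^2}$-module structure on $\M$ is completely determined by the operators $\a_w\mapsto T_s\a_w$; hence at most one structure can satisfy the displayed rule, and uniqueness is immediate. For existence I would use the standard presentation of $\cH_{q^2}$ as the associative $\cA$-algebra on generators $T_s$ ($s\in S$) subject to the quadratic relations $T_s^2=(q^2-1)T_s+q^2$ and the braid relations $\underbrace{T_sT_tT_s\cdots}_{m_{st}}=\underbrace{T_tT_sT_t\cdots}_{m_{st}}$ for all $s\ne t$ with $m_{st}<\infty$. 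So I would define $\cA$-linear maps $\tau_s\in\End_\cA(\M)$ by letting $\tau_s(\a_w)$ be the right-hand side of the displayed formula, and then check that the $\tau_s$ satisfy these two families of relations; granting that, $T_s\mapsto\tau_s$ extends to an $\cA$-algebra homomorphism $\cH_{q^2}\to\End_\cA(\M)$, i.e.\ to a module structure, which obeys the required rule by construction.

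\textbf{The quadratic relation.}
I would first record the basic combinatorics of the $\act$-action (due to Lusztig and Vogan): $w\mapsto s\act w$ is a well-defined involution of $\I$; for each $w$ exactly one of the four displayed cases occurs; and $\ell(s\act w)-\ell(w)$ equals $\pm1$ in the $sw$-cases and $\pm2$ in the $sws^*$-cases, with the sign reversed when $w$ is replaced by $s\act w$. Given this, $\tau_s^2=(q^2-1)\tau_s+q^2\cdot\mathrm{id}$ is verified by a short computation in each of the four cases inside the span of $\a_w$ and $\a_{s\act w}$. For example, if $s\act w=sws^*>w$ then $s\act(sws^*)=w<sws^*$, so $sws^*$ falls into the fourth case, giving $\tau_s(\a_{sws^*})=q^2\a_w+(q^2-1)\a_{sws^*}$ and hence $\tau_s^2(\a_w)=q^2\a_w+(q^2-1)\tau_s(\a_w)$, as required; the other three cases are analogous.

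\textbf{The braid relations (the main obstacle).}
Essentially all the difficulty is here. Fix $s\ne t$ with $m:=m_{st}<\infty$; it suffices to show that the two alternating length-$m$ products of $\tau_s$ and $\tau_t$ agree on each $\a_w$, $w\in\I$. Because $\tau_s(\a_{w'})$ is always an $\cA$-combination of $\a_{w'}$ and $\a_{s\act w'}$, applying any word of length $\le m$ in $\tau_s,\tau_t$ to $\a_w$ keeps us in the span of the $\a_{w'}$ with $w'$ in the part of the $\langle\act_s,\act_t\rangle$-orbit of $w$ reachable in $\le m$ steps, and the resulting coefficients depend only on $(\ell,\act_s,\act_t)$ restricted to that configuration. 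The plan is then to classify the possible local shapes of such a configuration around a given $w$: one shows that the joint behaviour of $\act_s$, $\act_t$ and $\ell$ on $\I$ locally reproduces one of a short list of standard patterns, each modelled on the $\act$-type action of $\langle s,t\rangle$ inside the dihedral group of order $2m$ — so that the list is finite and independent of where $s^*$ and $t^*$ sit in $S$. For each pattern the braid identity reduces to an explicit equality of two products of $m$ small matrices over $\cA$, which is checked directly. Marshalling the combinatorics of twisted involutions needed for this classification, and then discharging the resulting finite list of matrix computations, is the technical heart of the proof; everything else is formal.
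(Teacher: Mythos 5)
The paper itself does not prove this Theorem-Definition: it is quoted from Lusztig--Vogan \cite{LV1} (Weyl and affine Weyl groups, by geometric methods) and from Lusztig \cite{LV2} (arbitrary Coxeter systems, by an elementary algebraic argument), so the only meaningful comparison is with those sources. Your overall strategy --- uniqueness because the $T_s$ generate $\cH_{q^2}$, existence by invoking the presentation of $\cH_{q^2}$ by the quadratic and braid relations and checking both families for the operators $\tau_s$ defined by the displayed formula --- is exactly the route of the algebraic proof in \cite{LV2}, and your treatment of uniqueness, of the quadratic relation, and of the length facts about $s\act w$ that it relies on is correct.

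However, as written the proposal has a genuine gap: the braid relations are never actually verified, and that is where essentially all the content of the theorem lies (the quadratic relation and uniqueness are routine). You defer two substantive claims: (i) that the joint behaviour of $\ell$, $s\act(-)$ and $t\act(-)$ on the relevant part of the orbit of $w$ falls into a finite list of standard ``dihedral'' patterns, independent of where $s^*$ and $t^*$ sit in $S$; and (ii) that for each pattern the two alternating products of $m_{st}$ explicit matrices over $\cA$ coincide. Claim (i) in particular is not obvious: whether a given $w$ is in an $sw$-case or an $sws^*$-case is governed by the equation $sw=ws^*$, which involves $s^*$ and is not determined inside the subgroup $\langle s,t\rangle$, so the asserted classification requires a genuine analysis of the action of $W_{\{s,t\}}$ on $\I$ (in \cite{LV2} this orbit analysis and the ensuing case-by-case verification, organized by $m_{st}$ and by how $*$ interacts with $\{s,t\}$, occupy the bulk of the proof). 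So your write-up is a correct plan that matches the known argument, but it stops short of being a proof precisely at its technical heart.
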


\begin{thmdef}[Lusztig and Vogan \cite{LV1}; Lusztig \cite{LV2}]\label{lv-thmdef2}
 There is a unique $\ZZ$-linear involution $\overline{\ } : \M \to \M$ such that $\overline{a_1} = a_1$ and $\overline{h\cdot m} = \overline h \cdot \overline m$ for all $h \in \cH_{q^2}$ and $m \in \cM_{q^2}$.
%This bar operator acts on the standard basis of $\cM_{q^2}$ by the  formula $\overline{\a_w} =(-1)^{\ell(w)}\cdot  (T_{w^{-1}})^{-1}\cdot \a_{w^{-1}}$ for $w \in \I$. %, where $\sgn(w) = (-1)^{\ell(w)}$. 
\end{thmdef}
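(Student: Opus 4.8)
The statement is due to Lusztig and Vogan, and the plan is to reconstruct their argument, which splits into two parts: \emph{uniqueness}, deduced from a cyclicity property of $\M$ over $\cH_{q^2}$, and \emph{existence}, obtained from an explicit recursion on length whose well-definedness is reduced---in the manner of Matsumoto's theorem---to the dihedral case.

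\emph{Uniqueness.} First I would observe that $\M$ becomes a cyclic $\cH_{q^2}$-module, generated by $a_1$, after the element $q+1$ is inverted. Indeed, if $w\neq 1$ one may choose a left descent $s\in S$ of $w$ and set $v=s\act w$; then $\ell(v)<\ell(w)$ (a standard property of the action $\act$), and inspecting the four cases of Theorem-Definition~\ref{lv-thmdef1} applied with input $v$---so that $s\act v=w$---shows that either $a_w=T_s a_v$ when $v=sws^*$, or $(q+1)a_w=T_s a_v-q a_v$ when $v=sw=ws^*$. By induction on $\ell(w)$, every $a_w$ then lies, after inverting $q+1$, in the $\cH_{q^2}$-submodule generated by $a_1$. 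Now suppose $\overline{\ }$ is any $\ZZ$-linear involution of $\M$ with $\overline{a_1}=a_1$ and $\overline{hm}=\overline h\,\overline m$ for all $h\in\cH_{q^2}$ and $m\in\M$. Taking $h=v^nT_e$ shows $\overline{\ }$ is semilinear over $\cA$ for $v\mapsto v^{-1}$, and applying $\overline{\ }$ to the two relations above---using that the bar operator of $\cH_{q^2}$ carries $T_s$ to $T_s^{-1}=q^{-2}T_s-(1-q^{-2})T_e$---yields either $\overline{a_w}=T_s^{-1}\overline{a_v}$ or $(q^{-1}+1)\overline{a_w}=T_s^{-1}\overline{a_v}-q^{-1}\overline{a_v}$. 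Since $\M$ is a free, hence torsion-free, $\cA$-module and $q+1\neq 0$, each equation determines $\overline{a_w}$ from $\overline{a_v}$; so $\overline{\ }$ is unique by induction.

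\emph{Existence.} To build the operator I would turn these formulas into a definition: let $\overline{\ }:\M\to\M\otimes_\cA\QQ(v)$ be the $\ZZ$-linear map with $\overline{v^n a_1}:=v^{-n}a_1$ and, for each $w\neq 1$, a chosen left descent $s$, and $v:=s\act w$, with $\overline{a_w}:=T_s^{-1}\overline{a_v}$ if $v=sws^*$ and $\overline{a_w}:=\tfrac{q}{q+1}\bigl(T_s^{-1}-q^{-1}\bigr)\overline{a_v}$ if $v=sw=ws^*$. One must then check, by a single induction on $\ell(w)$, that (i) this is \emph{independent of the chosen descent}---if $s$ and $s'$ are both left descents of $w$, the two recursions agree---and that (ii) it is \emph{integral}---$\overline{a_w}$ in fact lies in $\M$, the only obstruction being the denominator $q+1$ in the second case, which cancels because $\bigl(T_s^{-1}-q^{-1}\bigr)$ carries $\overline{a_v}$ into $(q+1)\M$, as a direct check from the coefficients in Theorem-Definition~\ref{lv-thmdef1} shows. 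Granting (i) and (ii), a routine case analysis over the four possibilities of Theorem-Definition~\ref{lv-thmdef1} (using $T_s^{-2}=(q^{-2}-1)T_s^{-1}+q^{-2}$) gives $\overline{T_s a_v}=T_s^{-1}\overline{a_v}$ for all $s$ and $v$; since the $T_s$ generate $\cH_{q^2}$ as an $\cA$-algebra and the bar operator of $\cH_{q^2}$ is multiplicative, this upgrades to $\overline{hm}=\overline h\,\overline m$ for all $h\in\cH_{q^2}$ and $m\in\M$. Finally $\overline{\ }^2$ is then an $\cA$-linear $\cH_{q^2}$-module endomorphism of $\M$ fixing $a_1$, hence the identity by the cyclicity established above; so $\overline{\ }$ is an involution, and it satisfies $\overline{a_1}=a_1$ by construction, as required.

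\emph{The main obstacle.} The substantive point is (i): that the recursive value of $\overline{a_w}$ is independent of which left descent is used, which is where all the combinatorics of twisted involutions enters. I would treat it exactly as Matsumoto's theorem treats the word problem. Encode $\overline{a_w}$ as a composite of operators read off from a reduced $\I$-expression of $w$ (a shortest sequence $(s_1,\dots,s_r)$ with $w=s_1\act(\cdots\act(s_r\act 1))$); use the word property for such expressions (Richardson--Springer, Hultman) to reduce independence to invariance under the elementary braid moves; and observe that each braid move is confined to the rank-two parabolic subgroup generated by two of the $s_i$. It then suffices to verify the claim when $(W,S)$ is a dihedral group equipped with either the trivial involution or the involution interchanging its two generators---a finite explicit computation in each case. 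The integrality claim (ii), and the passage from $\overline{T_s a_v}=T_s^{-1}\overline{a_v}$ to the full semilinearity, are comparatively routine bookkeeping once (i) is in hand.
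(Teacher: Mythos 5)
The paper does not prove this statement at all: it is quoted from Lusztig--Vogan \cite{LV1} and Lusztig \cite{LV2}, and the only trace of the actual construction in the paper is the closed formula recorded immediately after the statement, $\overline{a_w}=(-1)^{\ell(w)}(T_{w^{-1}})^{-1}a_{w^{-1}}$, which is how Lusztig's existence proof really goes (define the map by this formula, or establish it inductively, and verify $\overline{T_s m}=\overline{T_s}\,\overline m$ case by case). So there is no in-paper proof to compare with; judged on its own, your uniqueness argument is correct and standard (the recursion $a_{s\act w}=T_sa_w$ or $(q+1)a_{s\act w}=T_sa_w-qa_w$, plus torsion-freeness of $\M$, pins down $\overline{a_w}$ by induction on length), but your existence argument has genuine gaps exactly at its two load-bearing points.

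First, the descent-independence step (i): reducing it to braid moves on reduced $\I$-expressions via Hultman's word property is a reasonable idea, but your conclusion ``it suffices to verify the claim when $(W,S)$ is a dihedral group'' is not correct as stated. A braid move in the middle of an $\I$-expression is applied to an arbitrary intermediate element $x\in\I$ of the \emph{full} group $W$, and which of the two recursion operators ($T_s^{-1}$ versus $\tfrac{q}{q+1}(T_s^{-1}-q^{-1})$) gets used at each step depends on whether $s\act x=sx$ or $sxs^*$ for that intermediate $x$. So what must be verified is an identity of operator composites on all of $\M$, governed by the $\{s,t\}$-orbit structure on $\I$ of $W$ (which contains configurations that never occur inside the dihedral parabolic itself), not merely a computation for dihedral $(W,S)$. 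Second, the integrality step (ii): the asserted justification---that $(T_s^{-1}-q^{-1})$ carries $\overline{a_v}$ into $(q+1)\M$ ``as a direct check from the coefficients in Theorem-Definition \ref{lv-thmdef1}''---does not work, because the divisibility is false termwise: for a single basis vector one has, e.g., $(T_s^{-1}-q^{-1})a_x=q^{-2}a_{sxs^*}-(1+q^{-1}-q^{-2})a_x$ when $s\act x=sxs^*>x$, and neither coefficient lies in $(q+1)\cA$. The divisibility holds only for the particular element $\overline{a_v}$, and proving it requires a stronger inductive hypothesis about $\overline{a_v}$ (e.g.\ a congruence $T_s\overline{a_v}\equiv-\overline{a_v}$ mod $q+1$, or the closed formula itself), which is essentially the compatibility relation you are trying to establish; as written the argument is circular at this point. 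The cleanest repair is in fact to prove inductively that your recursion produces $\overline{a_w}=(-1)^{\ell(w)}(T_{w^{-1}})^{-1}a_{w^{-1}}$, which yields (i) and (ii) simultaneously---but that is precisely Lusztig's route, and it is the part of the proof your sketch defers.
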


Lusztig \cite{LV2} has shown moreover  that the bar operator just defined acts on the standard basis of $\cM_{q^2}$ by the  formula $\overline{\a_w} =(-1)^{\ell(w)}\cdot  (T_{w^{-1}})^{-1}\cdot \a_{w^{-1}}$ for $w \in \I$.

\begin{thmdef}[Lusztig and Vogan \cite{LV1}; Lusztig \cite{LV2}]\label{lv-thmdef3} For each $w \in \I$ there is a unique family of polynomials $\( \Psig_{y,w} \)_{y \in \I} \subset \ZZ[q]$ 
with the following three properties:
\ben
\item[(a)] The element $ %\label{A_w-def} 
A_w \omdef=  v^{-\ell(w)} \cdot \sum_{y \in \I}   \Psig_{y,w}\cdot a_y\in \M$
has $\overline{A_w} = A_w$.
\item[(b)] $\Psig_{y,w} = \delta_{y,w}$ if $y \not < w$ in the Bruhat order.
\item[(c)] $\Psig_{y,w}$ has degree at most $ \frac{1}{2} \( \ell(w)-\ell(y)-1\)$  as a polynomial in $q$ whenever $y < w$.
\een
\end{thmdef}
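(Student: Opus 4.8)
\emph{Plan of proof.} The plan is to follow the classical template of Theorem-Definition \ref{kl-thmdef}, transported from $\cH_{q^2}$ to the module $\M$, with left multiplication replaced by the $\cH_{q^2}$-action of Theorem-Definition \ref{lv-thmdef1}. Everything rests on two structural inputs. The first is that, after the rescaling $\widehat a_w:=v^{-\ell(w)}a_w$, the formula $\overline{a_w}=(-1)^{\ell(w)}(T_{w^{-1}})^{-1}a_{w^{-1}}$ quoted after Theorem-Definition \ref{lv-thmdef2} takes the \emph{Bruhat-unitriangular} shape $\overline{\widehat a_w}=\widehat a_w+\sum_{y<w}r_{y,w}\widehat a_y$ with $r_{y,w}\in\cA$, and moreover that each $r_{y,w}$ is a $\ZZ$-linear combination of powers $v^k$ with $k\equiv \ell(w)-\ell(y)\pmod 2$; this should follow from the triangularity and parity of the bar operator on $\cH_{q^2}$, the four-case action formula, and the equality $\ell(w^{-1})=\ell(w^*)=\ell(w)$. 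The second is the elementary Bruhat-order combinatorics of $\I$ under the maps $s\act(-)$ proved in \cite{LV2}: that $s\act(s\act w)=w$, that $\ell(s\act w)\ne\ell(w)$ with $s\act w<w\iff sw<w$ and $\ell(w)-\ell(s\act w)\in\{1,2\}$ when $s\act w<w$, and that the standard-basis expansion of $T_sa_{w'}$ is supported on $\{y\in\I:y\le s\act w'\}$ whenever $s\act w'>w'$.

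\emph{Uniqueness.} Given two families satisfying (a)--(c), form $D:=A_w-\widetilde A_w=\sum_{y<w}d_y\widehat a_y$, where by (c) the coefficient $d_y=v^{\ell(y)-\ell(w)}\bigl(\Psig_{y,w}(v^2)-\widetilde P_{y,w}(v^2)\bigr)$ is a Laurent polynomial with all $v$-exponents $\le-1$, i.e.\ $d_y\in v^{-1}\ZZ[v^{-1}]$. If $D\ne0$, pick $y_0$ maximal in its finite Bruhat support. Because the bar operator is Bruhat-unitriangular, the coefficient of $\widehat a_{y_0}$ in $\overline D$ is exactly $\overline{d_{y_0}}$, so $\overline D=D$ forces $\overline{d_{y_0}}=d_{y_0}$; since $\overline{d_{y_0}}\in v\ZZ[v]$ this gives $d_{y_0}=0$, a contradiction. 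Hence $(\Psig_{y,w})_y$ is unique.

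\emph{Existence.} The bar-invariant element itself, in the form $A_w=\widehat a_w+\sum_{y<w}p_{y,w}\widehat a_y$ with each $p_{y,w}\in v^{-1}\ZZ[v^{-1}]$, exists and is unique by the standard ``unitriangular self-dual basis'' lemma applied to $\bigoplus_{y\in\I}\cA\widehat a_y$ with the Bruhat order on $\I$ and the bar operator of Theorem-Definition \ref{lv-thmdef2} --- this is purely formal once Bruhat-unitriangularity is known. Reading off coefficients in the original basis yields $\Psig_{y,w}$ with $\Psig_{w,w}=1$, property (b) (the lemma's output is supported on $\{y\le w\}$), and the degree bound (c) (from $p_{y,w}\in v^{-1}\ZZ[v^{-1}]$). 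The one remaining point, that $\Psig_{y,w}\in\ZZ[q]$ rather than merely $\ZZ[v,v^{-1}]$, is where the $\cH_{q^2}$-action enters. Arguing by induction on $\ell(w)$, choose $s$ with $sw<w$, so $w':=s\act w$ has $w'<w$, $s\act w'=w$, and $\ell(w)-\ell(w')\in\{1,2\}$; the element $C_sA_{w'}$, where $C_s=v^{-2}(T_s+1)\in\cH_{q^2}$ satisfies $\overline{C_s}=C_s$, is bar-invariant, supported on $\{y\le w\}$, and --- since $T_s+1$ acts on the standard basis through matrices with entries in $\ZZ[q]$ --- lies in $v^{-\ell(w')-2}\cdot\bigl(\ZZ[q]\text{-span of }\{a_y\}\bigr)$. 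Subtracting appropriate bar-invariant multiples of the inductively available $A_z$ ($z<w$), and when $\ell(w)-\ell(w')=1$ dividing out the factor $v+v^{-1}=v^{-1}(1+q)$, recovers $A_w$; tracking the powers of $v$ through these operations shows $\Psig_{y,w}\in\ZZ[q]$, closing the induction.

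\emph{Main obstacle.} The skeleton above is routine once the two structural inputs are available, so the crux is establishing them: deducing the Bruhat-unitriangular-and-parity shape of the bar operator on $\M$ from $\overline{a_w}=(-1)^{\ell(w)}(T_{w^{-1}})^{-1}a_{w^{-1}}$, the length and Bruhat comparisons for $s\act w$, and --- most delicately --- the degree bookkeeping that forces $\Psig_{y,w}$ to be a polynomial in $q$. It is precisely in the length-drop-by-one case, where the factor $v+v^{-1}$ intervenes in place of $1$, that the ``$\mu$-like'' coefficients one must subtract land a priori in $\ZZ[v+v^{-1}]$ rather than $\ZZ$; this failure of integrality --- absent in the classical setting of Theorems \ref{kl-thmdef} and \ref{kl-thm} --- is exactly the mechanism by which $\Psig_{y,w}$ can acquire negative coefficients, and controlling it in the induction is the one genuinely subtle point.
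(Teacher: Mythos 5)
This Theorem-Definition is not proved in the paper at all: it is quoted from Lusztig--Vogan \cite{LV1} and Lusztig \cite{LV2}, so your plan has to be measured against their argument. In outline you have reproduced it: uniqueness via Bruhat-unitriangularity of the bar operator on $\M$, and existence via the formal ``unitriangular self-dual basis'' lemma applied to $\bigoplus_{y\in\I}\cA\,\widehat a_y$. Your uniqueness argument is correct as written (granting unitriangularity), and the reduction of (b) and the degree bound (c) to $p_{y,w}\in v^{-1}\ZZ[v^{-1}]$ is fine.

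The gap is in the two places you yourself flag, and it is real. First, the unitriangularity-with-parity of $\overline{\ }$ on $\M$ is not a formal consequence of the quoted formula $\overline{a_w}=(-1)^{\ell(w)}(T_{w^{-1}})^{-1}a_{w^{-1}}$: one must expand $(T_{w^{-1}})^{-1}$ in the $T_x$-basis and prove that the $T_x$-action on the $a_y$ is compatible with Bruhat order with unit leading coefficient; this is a genuine inductive computation (the module analogue of the $R$-polynomials in \cite{LV2}), not something that ``should follow.'' Second, your mechanism for $\Psig_{y,w}\in\ZZ[q]$ is underdetermined and, as stated, circular: to ``recover $A_w$'' from $C_sA_{w'}$ in the case $sw'=w's^*$ you must know which bar-invariant multiples of the $A_z$ to subtract and that the result is divisible by $v+v^{-1}$ with the quotient landing in $v^{-\ell(w)}\ZZ[q]\text{-span}$; identifying the coefficient of $A_w$ as $v+v^{-1}$ and controlling the correction coefficients is essentially the content of Theorem \ref{mult-thm}, which is derived \emph{after} the basis is constructed. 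Moreover $\ZZ[q]$-membership requires excluding both odd powers of $v$ \emph{and} negative powers, and the formal lemma alone bounds neither. The clean repair (and in substance the route of \cite{LV2}) is to prove, along with unitriangularity, that the coefficient $\rho_{y,z}$ of $\widehat a_y$ in $\overline{\widehat a_z}$ lies in $\ZZ[v,v^{-1}]$ with all $v$-exponents of parity $\ell(z)-\ell(y)$ and of absolute value at most $\ell(z)-\ell(y)$; then the inductive solution of $p_{y,w}-\overline{p_{y,w}}=\sum_{y<z\le w}\rho_{y,z}\,p_{z,w}$ automatically inherits the parity and the lower valuation bound $\ell(y)-\ell(w)$, giving $\Psig_{y,w}=v^{\ell(w)-\ell(y)}p_{y,w}\in\ZZ[q]$ together with (a)--(c) in one pass, with no division by $v+v^{-1}$ anywhere. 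Your $C_sA_{w'}$ induction can be made to work, but only after this bookkeeping is done, at which point it is no longer needed for the existence statement.
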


The elements $\( A_w \)_{ w \in \I}$ form an $\cA$-basis for the module $\cM_{q^2}$, which we sometimes refer to this as the ``twisted Kazhdan-Lusztig basis.'' 
%of $\cM_{q^2}$.
Likewise, we call the polynomials $\Psig_{y,w}$ the \emph{twisted Kazhdan-Lusztig polynomials} of the triple $(W,S,*)$.

\subsection{Positivity properties}
\label{conj-sect} 

The primary results of this paper concern four conjectural positivity properties of the twisted Kazhdan-Lusztig polynomials $\Psig_{y,w}$. These 
are patterned on the following (partially conjectural) properties of the Kazhdan-Lusztig polynomials $P_{y,w}$ of an arbitrary Coxeter system $(W,S)$. 

\begin{conjecture}{A}\label{A}
The polynomials $P_{y,w}$ have nonnegative  coefficients for all $y,w \in W$.
\end{conjecture}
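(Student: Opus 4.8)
The plan is to reinterpret the coefficients of $P_{y,w}$ as \emph{dimensions} of naturally occurring vector spaces, which makes nonnegativity manifest; the argument splits according to whether $(W,S)$ admits a geometric model.

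\textbf{The crystallographic case.} Suppose first that $(W,S)$ is a Weyl group, with associated reductive group $G$, Borel subgroup $B$, flag variety $G/B$, and Schubert varieties $\overline{X_w}$ indexed by $w \in W$. Here I would invoke the foundational result of Kazhdan and Lusztig that $P_{y,w}$ computes the local intersection cohomology of $\overline{X_w}$ along the cell $X_y$, so that
\[ P_{y,w}(q) \;=\; \sum_{i \geq 0} \dim \mathcal H^{2i}\bigl(\mathrm{IC}(\overline{X_w})\bigr)_{p}\, q^{i} \]
for any point $p \in X_y$. The steps are: (1) equip the $B$-equivariant derived category of sheaves on $G/B$ with the convolution action of $\cH_q$, matching $t_s$ with the standard complex on $\overline{X_s}$; (2) identify the Kazhdan--Lusztig basis element $c_w$ with the class of the suitably shifted, Verdier self-dual intersection cohomology complex $\mathrm{IC}(\overline{X_w})$, by checking that this class satisfies bar-invariance (Verdier duality), the support condition, and the stalk-degree bound --- exactly properties (a), (b), (c) of Theorem-Definition \ref{kl-thmdef} --- where the degree bound uses Deligne's pointwise purity and the weight filtration; (3) read off that the coefficients of $P_{y,w}$ are dimensions of stalk cohomology groups, hence lie in $\NN$.

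\textbf{The general case.} For an arbitrary Coxeter system there is no flag variety, and this is precisely the main obstacle: one must supply an algebraic replacement for intersection cohomology. Following Soergel and then Elias--Williamson, I would: (1) fix a reflection-faithful realization of $(W,S)$ over $\mathbb R$ and form the category $\mathbb B$ of Soergel bimodules over the associated polynomial ring, which categorifies $\cH_q$ via $\langle B_s\rangle \leftrightarrow c_s$; (2) reduce Property A to \emph{Soergel's conjecture}, the assertion that the indecomposable bimodule $B_w$ has graded character $c_w$, since then the coefficients of $P_{y,w}(q)$ record graded multiplicities of standard bimodules in $B_w$, which are nonnegative; (3) prove Soergel's conjecture by the Hodge-theoretic induction on $\ell(w)$ of \cite{EWpaper}: one carries along, as a single inductive package, the statements that $B_w$ is indecomposable with the correct character together with the Hard Lefschetz theorem and the Hodge--Riemann bilinear relations for an invariant form on $B_w$, and propagates all of these through tensoring with $B_s$ and passage to a direct summand, using Rouquier complexes to control the combinatorics.

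The genuinely hard step is (3) in the general case --- specifically the Hodge--Riemann induction, i.e.\ controlling the \emph{signature} of the intersection form under $(-)\otimes B_s$ and under splitting off a direct summand, with no variety to appeal to. This is the technical heart of Elias and Williamson's proof, which I would not attempt to reproduce here; accordingly, in what follows Property A is treated as the theorem of \cite{EWpaper} and is invoked only as a hypothesis in the subsequent reduction arguments for the twisted polynomials $\Psig_{y,w}$.
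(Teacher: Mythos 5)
Your proposal matches the paper's treatment: the paper does not prove Property \descref{A} itself, but simply records that it holds for all Coxeter systems by Elias and Williamson's proof of Soergel's conjecture \cite{EWpaper} (with the crystallographic case going back to Kazhdan--Lusztig's intersection cohomology interpretation), which is exactly the route you sketch before deferring the Hodge--Riemann induction to \cite{EWpaper}. So your outline is accurate and, like the paper, ultimately rests on citing that external theorem rather than reproving it.
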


\begin{conjecture}{B}\label{B}
 The   polynomials $P_{y,w}$ are decreasing for fixed $w$, in the sense that the difference $P_{y,w} - P_{z,w}$ has nonnegative  coefficients whenever $y,z,w \in W$ and $y \leq z$.
\end{conjecture}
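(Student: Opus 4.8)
Since \descref{B} is a conjecture, a uniform proof is not available; what I would aim for is (i) a reduction to irreducible Coxeter systems, (ii) a reduction to Bruhat covering relations together with an induction on $\ell(w)$ through the Kazhdan--Lusztig recursion, and (iii) direct computation in the finite non-crystallographic cases, which is the route actually carried out in the body of the paper.

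For (i): if $(W,S) = (W_1,S_1) \times (W_2,S_2)$, then the Bruhat order on $W$ is the product order, each $w$ factors as $(w_1,w_2)$, and $P_{(y_1,y_2),(w_1,w_2)} = P_{y_1,w_1}\, P_{y_2,w_2}$; since $a_1 a_2 - b_1 b_2 = a_1(a_2 - b_2) + b_2(a_1 - b_1)$ has nonnegative coefficients whenever the $a_i - b_i$ and $b_i$ do, monotonicity descends to the irreducible factors, and one may assume $(W,S)$ irreducible. For (ii): by transitivity of the Bruhat order it suffices to treat a covering relation $y \lessdot z$, and one may assume $z \le w$ (else $P_{z,w} = 0 \le P_{y,w}$ and we are done). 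Now induct on $\ell(w)$: fix $s \in S$ with $sw < w$ and apply the Kazhdan--Lusztig recursion (a consequence of Theorem~\ref{kl-thm}), which writes $P_{v,w}$, for $v \in \{y,z\}$, as $P_{sv,sw} + q^{c_v} P_{v,sw}$ minus a sum $\sum_u \mu(u,sw)\, q^{(\ell(w)-\ell(u))/2} P_{v,u}$, where $c_v \in \{0,1\}$ records whether $sv>v$. In favorable configurations the classical descent identity $P_{sv,w} = P_{v,w}$ (valid whenever $sw<w$) collapses the comparison outright; otherwise one would hope to conclude from the inductive hypothesis together with nonnegativity of the $\mu$-coefficients and of the polynomials $P_{v,u}$ with $\ell(u) < \ell(w)$ --- all now unconditional, by Elias--Williamson \cite{EWpaper}.

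The main obstacle is that this last step does not close in general. The recursion contributes a subtracted term $\sum_u \mu(u,sw)\, q^{(\ell(w)-\ell(u))/2}\,(P_{y,u} - P_{z,u})$, which is nonnegative by the inductive hypothesis but works \emph{against} the desired inequality; and the ``main terms'' $P_{sy,sw} - P_{sz,sw}$ and $q^{c_y}P_{y,sw} - q^{c_z}P_{z,sw}$ are controlled by induction only when $sy \le sz$ and $c_y = c_z$, conditions that can fail --- precisely when $y$ and $z$ behave differently under $s$, or when no single simple reflection lies in the left descent sets of both $z$ and $w$. For finite and affine Weyl groups this obstacle is bypassed: $P_{y,w}$ is the Poincar\'e polynomial of the stalk of the intersection cohomology complex of the Schubert variety $X_w$ along the cell indexed by $y$, and geometric input (properties of such intersection cohomology sheaves, or the representation theory of a graded category $\cO$) yields monotonicity along Bruhat edges directly. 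In the non-crystallographic finite types $H_3$ and $H_4$, where no such tools are available and \descref{B} remains in principle conjectural, the plan is instead to enumerate $W$, compute every $P_{y,w}$ using du Cloux's algorithm as extended in \cite{MyCode}, and verify the finitely many inequalities $P_{y,w} - P_{z,w} \ge 0$ with $y \le z \le w$ by inspection --- which is exactly what the calculations reported here accomplish.
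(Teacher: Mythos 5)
Property \descref{B} is stated in the paper as a conjecture, not proved there, and your treatment matches the paper's: your product-factorization argument for descending to irreducible factors is essentially the identity used in Proposition \ref{reduction-prop}, and the finite case rests on outside input rather than a uniform argument, with your honest account of why a naive induction through the Kazhdan--Lusztig recursion fails being accurate. One small correction of attribution: the paper does not itself verify Property \descref{B} in types $H_3$ and $H_4$ by the computations it reports --- it cites Irving \cite{Ir} and du Cloux \cite{Fokko} for all finite Coxeter systems, while its own calculations concern the twisted analogue \descref{B$'$} (and Property \descref{D} for low-rank factors).
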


%Denote the structure coefficients of $\cH_q$ in the Kazhdan-Lusztig basis by $\( h_{x,y;z} \)_{x,y,z \in W}$; i.e., these are the Laurent polynomials in $ \cA$ satisfying $c_x c_y = \sum_{z \in W} h_{x,y;z} c_z $ for $x,y,z \in W$.

Let $\( h_{x,y;z} \)_{x,y,z \in W}$ denote  the structure constants of $\cH_q$ in the Kazhdan-Lusztig basis, i.e.,  the Laurent polynomials in $ \cA$ satisfying 
$c_x c_y = \sum_{z \in W} h_{x,y;z} c_z $ for $x,y,z \in W$.

\begin{conjecture}{C}\label{C}
The Laurent polynomials $h_{x,y;z}$ have nonnegative coefficients for all $x,y,z \in W$.
\end{conjecture}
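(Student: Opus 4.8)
The plan is to deduce this from the categorification of the Hecke algebra $\cH_q$ by Soergel bimodules, together with the Soergel conjecture as proved by Elias and Williamson \cite{EWpaper} (the same deep input that lies behind Conjecture \descref{A}); the result for Weyl groups is originally due to Soergel, and the argument below is the standard one.

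First I would fix a categorical model for $\cH_q$ that applies to an arbitrary Coxeter system $(W,S)$ --- in particular to the non-crystallographic types $H_3$ and $H_4$ relevant here. The convenient choice is the Elias--Williamson diagrammatic Hecke category $\cB$ associated with a reflection-faithful realization of $(W,S)$ over $\mathbb{R}$: this is a graded, additive, Karoubian, Krull--Schmidt monoidal category --- write $(k)$ for the shift of the grading by $k \in \ZZ$ and $\otimes$ for the monoidal product --- whose split Grothendieck group carries an isomorphism of $\ZZ[v,v^{-1}]$-algebras $[\cB] \xrightarrow{\ \sim\ } \cH_q$ under which $[\,\cdot\,]$ intertwines $\otimes$ with multiplication, $v$ corresponds to the shift $(1)$, and the classes of the Bott--Samelson generators map to the elements $c_s$. (For finite crystallographic $(W,S)$ one may use honest Soergel bimodules over a polynomial ring instead.)

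The essential ingredient is then the statement of \cite{EWpaper}: the indecomposable objects of $\cB$ are, up to isomorphism and shift, a family $\{B_w : w \in W\}$ indexed by $W$, and the Grothendieck group isomorphism above satisfies $[B_w] = c_w$ for every $w \in W$. Granting this, fix $x,y \in W$. Since $\cB$ is Krull--Schmidt and $B_x \otimes B_y$ is a finite direct sum of indecomposables, there are unique nonnegative integers $m^{k}_{x,y;z}$, all but finitely many of them zero, with $B_x \otimes B_y \cong \bigoplus_{z \in W}\bigoplus_{k \in \ZZ} B_z(k)^{\oplus m^{k}_{x,y;z}}$. Applying $[\,\cdot\,]$ and using the previous paragraph,
\[ c_x c_y \ =\ [B_x \otimes B_y] \ =\ \sum_{z \in W}\Bigl(\sum_{k \in \ZZ} m^{k}_{x,y;z}\, v^{k}\Bigr)\, c_z \]
(up to the sign convention on the shift). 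Comparing with the defining relation $c_x c_y = \sum_{z} h_{x,y;z}\,c_z$ and using that $(c_z)_{z \in W}$ is an $\cA$-basis of $\cH_q$ identifies $h_{x,y;z}$ with $\sum_{k} m^{k}_{x,y;z}\, v^{k}$, a Laurent polynomial whose coefficients are the nonnegative integers $m^{k}_{x,y;z}$, as desired.

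The hard part is entirely the equality $[B_w] = c_w$ in the third paragraph: this is Soergel's conjecture, and its only known proof at this level of generality is the Hodge-theoretic argument of Elias and Williamson, which establishes hard Lefschetz for Soergel bimodules and is far from elementary --- for the present purpose it may simply be cited. Everything else is formal bookkeeping in the split Grothendieck group; the only minor subtlety is selecting a categorical model general enough to handle all Coxeter systems rather than just crystallographic ones, which is why one works diagrammatically with an $\mathbb{R}$-realization, where a reflection-faithful realization exists for every $(W,S)$ and no finiteness or crystallographicity hypothesis is needed.
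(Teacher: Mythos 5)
Your argument is correct and amounts to precisely the reasoning the paper itself relies on: the paper gives no independent proof of Property C, asserting only that it follows from Elias and Williamson's proof of Soergel's conjecture \cite{EWpaper}, and your Grothendieck-group bookkeeping (decomposing $B_x \otimes B_y$ into shifted indecomposables and reading off $h_{x,y;z}$ as graded multiplicities) is the standard way that implication is obtained. The only cosmetic caveat is that Elias and Williamson's original argument is phrased for genuine Soergel bimodules over a suitable real representation rather than the diagrammatic category, but this does not affect the deduction.
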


\begin{conjecture}{D}\label{D}
For any $x,y,z \in W$, if the Laurent polynomial $h_{x,y;z}$ has degree $d$ in $v$ (where we consider $0$ to have degree $0$), then $v^d h_{x,y;z}$ is a unimodal polynomial in $q=v^2$.
\end{conjecture}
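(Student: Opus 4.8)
One natural approach to Conjecture D runs through the Hodge theory of the Hecke category; we outline it here, but do not attempt to complete it, and instead verify the conjecture computationally in the finite cases of interest. For any Coxeter system $(W,S)$ one has the diagrammatic Hecke category of Elias and Williamson, whose indecomposable objects $B_w$ ($w \in W$), normalized to be self-dual, satisfy $\ch B_w = c_w$ by the main theorem of \cite{EWpaper}, where $\ch$ denotes the character map to $\cH_q$. Since $\ch$ is an algebra homomorphism we have $\ch(B_x \otimes B_y) = c_x c_y$; writing $B_x \otimes B_y \cong \bigoplus_z B_z \otimes V_z$ with each $V_z$ a finite-dimensional graded vector space and comparing characters on the two sides, one finds that $h_{x,y;z}$ is exactly the graded dimension of $V_z$. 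Because $B_x$, $B_y$, and $B_z$ are self-dual, so is $B_x \otimes B_y$, and hence so is $V_z$; therefore $h_{x,y;z}$ is invariant under $v \mapsto v^{-1}$, and --- using that all of its exponents share a common parity, a standard fact which is in any case built into the formulation of the conjecture --- the element $v^d h_{x,y;z}$ of the conjecture is a genuine palindromic polynomial in $q = v^2$. Conjecture D is the assertion that this palindromic sequence of graded multiplicities is single-peaked, and the plan is to equip each multiplicity space $V_z$ with a degree-$2$ operator $L$ for which $L^k \colon (V_z)_{-k} \to (V_z)_{k}$ is an isomorphism for every $k \geq 0$.

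When $(W,S)$ is crystallographic such an operator comes from geometry. There $B_x \otimes B_y$ corresponds to the convolution $\mathrm{IC}_x \star \mathrm{IC}_y = \mathbf{R} m_*(\mathrm{IC}_x \boxtimes \mathrm{IC}_y)$ of intersection cohomology sheaves along the proper multiplication map $m$ between the relevant Schubert-type varieties; the decomposition theorem expresses $\mathbf{R} m_*(\mathrm{IC}_x \boxtimes \mathrm{IC}_y)$ as a direct sum of shifted copies of the sheaves $\mathrm{IC}_z$, with multiplicities recording $h_{x,y;z}$, and relative hard Lefschetz for $m$ (with respect to an ample class on its source) shows that for each $z$ these multiplicities form a symmetric, unimodal sequence, which is exactly Conjecture D in this case. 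For a general Coxeter system one would want the purely algebraic analogue of this input: a \emph{relative} hard Lefschetz theorem, together with the accompanying Hodge--Riemann bilinear relations, for convolutions in the Hecke category, of which the hard Lefschetz and Hodge--Riemann results of Elias and Williamson would be the ``absolute'' instance, corresponding to pushing $B_x \otimes B_y$ forward to a point.

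The main obstacle is precisely the construction of this relative Hodge structure in full generality. The naive candidate for $L$ --- left multiplication by an ample degree-$2$ element of the underlying polynomial ring --- acts only on the $B_z$ tensor factor in $B_x \otimes B_y \cong \bigoplus_z B_z \otimes V_z$ and is inert on the multiplicity space $V_z$, so it is blind to the grading shifts that enumerate the multiplicities; and the hard Lefschetz and Hodge--Riemann theorems of Elias and Williamson are themselves statements about the indecomposable bimodules $B_w$, not about multiplicity spaces inside a tensor product. A suitable relative theory is known in special situations --- geometrically for Weyl groups, as above, and in certain cases through work of Patimo on the N\'eron--Severi Lie algebra of a Soergel module --- but it does not appear to be available for arbitrary $(W,S)$. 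For this reason we do not pursue Conjecture D in general; instead, in each finite case treated here we compute all of the structure constants $h_{x,y;z}$ explicitly, using du Cloux's program \texttt{Coxeter} \cite{Coxeter} together with the extensions of \cite{MyCode}, and check the unimodality of $v^d h_{x,y;z}$ directly, along with Conjectures A, B, and C.
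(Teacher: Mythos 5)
Your proposal is, in its operative content, the same as the paper's treatment: Property \descref{D} is stated there only as a conjecture, with no general proof attempted, and is verified by explicitly computing the structure constants $h_{x,y;z}$ with du Cloux's {\tt Coxeter} and checking unimodality directly in the finite cases considered (du Cloux's own computations already cover the dihedral types, $H_3$ and $H_4$). Two points of comparison are worth noting. First, the paper does not stop at case-by-case computation: it proves a reduction to irreducible factors (Proposition \ref{reduction-prop}), which rests on the elementary parity/balancedness statement for the $h_{x,y;z}$ (Proposition \ref{h-vv} and Corollary \ref{vv-cor}) together with the product lemma for balanced unimodal Laurent polynomials (Lemma \ref{bal-lem}); this is what upgrades the rank-at-most-five computations to all finite Coxeter systems whose irreducible factors have rank at most five, and your purely computational plan omits this step. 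Relatedly, the palindromicity you deduce from self-duality of the multiplicity spaces in the Hecke category is obtained in the paper by an elementary induction on the recurrence for the $c_w$, without categorical input. Second, your Hodge-theoretic outline --- $h_{x,y;z}$ as the graded dimension of $V_z$ in $B_x\otimes B_y\cong\bigoplus_z B_z\otimes V_z$, unimodality from a relative hard Lefschetz theorem, known geometrically in the crystallographic case via the decomposition theorem --- is a sound account of why the property is expected and correctly identifies the missing ingredient in general, but, as you acknowledge, it is not a proof and does not appear in the paper; if incorporated, it should be clearly flagged as motivation rather than as part of the verification.
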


\begin{remark}
Recall that a polynomial $a_0 + a_1 x + a_2x^2 + \dots + a_n x^n \in \ZZ[x]$, where each $a_i \in \ZZ$, is \emph{unimodal} if 
$ 0 \leq a_0 \leq \dots\leq a_{i-1} \leq a_i \geq a_{i+1} \geq \dots \geq a_n \geq 0$ for some index $i$. 
%Following \cite{zel}, we define the \emph{darga} of $f$ to be the sum $m+n$, where $m$ is the smallest index such that $a_m \neq 0$. Finally, we say that $f$  is \emph{symmetric} if $a_i = a_{N-i}$ for every $i$, where $N = darga(f)$.
%(The zero polynomial is thus unimodal, and all unimodal polynomials have nonnegative coefficients.)
%We adopt the convention in this work that the degree of the zero polynomial is $-1$. 
%This removes any potential confusion about references to the product of  an arbitrary polynomial $f\in \cA$ by $v$ to the power of its degree.
%Finally, we should remark that 
The content of Property \descref{D} really only concerns unimodality, for it always holds   that   $v^dh_{x,y;z}$  is   a polynomial in $q$ if $d$ is the degree of $h_{x,y;z}$ as a Laurent polynomial in $v$; see Corollary \ref{vv-cor}.
\end{remark}

Naturally accompanying the preceding properties is this conjecture.

\begin{conj} Properties A, B, C, and D hold for all Coxeter systems $(W,S)$.
\end{conj}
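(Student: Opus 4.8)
The plan is to attack the four properties uniformly through the \emph{Hecke category} of $(W,S)$ --- the category $\mathcal{B}$ of Soergel bimodules, or its diagrammatic avatar --- rather than by manipulating the polynomials $P_{y,w}$ and the structure constants $h_{x,y;z}$ directly. A preliminary reduction: since Kazhdan--Lusztig polynomials and the $h_{x,y;z}$ factor over a direct product of Coxeter systems, it suffices to treat irreducible $(W,S)$. The central tool is Soergel's categorification theorem, which identifies the split Grothendieck ring of $\mathcal{B}$ with $\cH_q$ so that, after the normalizing shift, the class of the indecomposable bimodule $B_w$ corresponds to $c_w$. Granting this dictionary, Properties \descref{A} and \descref{C} become statements about graded dimensions of $\Hom$-spaces and about direct-sum decompositions in $\mathcal{B}$, hence are automatically nonnegative.

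For Property \descref{A} I would simply invoke this: under the dictionary $P_{y,w}$ is, up to a shift, the graded multiplicity of a standard bimodule in a $\Delta$-flag of $B_w$, so its coefficients count dimensions. The non-formal ingredient is that $B_w$ really is indecomposable with the expected character --- Soergel's conjecture --- which is the theorem of Elias and Williamson \cite{EWpaper}, proved via the Hodge-theoretic package (hard Lefschetz and the Hodge--Riemann relations for $\mathcal{B}$) established by an intertwined induction over $W$. Property \descref{C} then follows from the same input together with the Krull--Schmidt property of $\mathcal{B}$: the identity $c_x c_y = \sum_z h_{x,y;z}\, c_z$ lifts to a decomposition $B_x \otimes_R B_y \cong \bigoplus_z B_z \otimes_\ZZ V_{x,y,z}$ of graded bimodules, and $h_{x,y;z}$ is the graded dimension of the $\ZZ$-vector space $V_{x,y,z}$, hence nonnegative.

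Properties \descref{B} and \descref{D} require genuinely new input, and this is where the difficulty lies. For monotonicity (\descref{B}), given $y \leq z \leq w$ one would try to produce an explicit injection of graded vector spaces realizing $P_{y,w}$ as ``containing'' $P_{z,w}$ --- geometrically, a restriction map between the stalks of the intersection cohomology sheaf of $\overline{X_w}$ at points of the strata $X_y$ and $X_z$, with the deeper stratum seeing more cohomology --- which is known in the crystallographic case by such arguments but in general needs a compatibility between the Hodge-theoretic filtrations at different strata that is not currently available. For unimodality (\descref{D}), the natural route is to equip the multiplicity space $V_{x,y,z}$ (or a closely related bimodule) with a Lefschetz operator of degree $2$ satisfying hard Lefschetz, which would force the coefficient sequence of $v^d h_{x,y;z}$ to be unimodal just as classical hard Lefschetz forces unimodality of Poincar\'e polynomials of smooth projective varieties. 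Establishing such a relative hard Lefschetz theorem on these structure-constant multiplicity spaces --- and the analogous monotonicity statement for \descref{B} --- is the main obstacle, and is precisely why the conjecture remains open beyond Property \descref{A} (now a theorem) and Property \descref{C}, and beyond the finite and crystallographic cases that can be checked directly.
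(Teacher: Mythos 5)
The statement you were asked to prove is a conjecture, and the paper does not prove it either: it only records the known partial results (Properties \descref{A} and \descref{C} hold for all Coxeter systems by Elias--Williamson \cite{EWpaper}; Property \descref{B} holds for all finite Coxeter systems by Irving \cite{Ir} and du Cloux \cite{Fokko}; Property \descref{D} is verified computationally for dihedral types, $H_3$, $H_4$, and finite systems whose irreducible factors have rank at most five) together with a reduction to the irreducible case (Theorem \ref{x-thm}, Proposition \ref{reduction-prop}). Measured against that, your proposal is an accurate status report rather than a proof: your treatment of \descref{A} and \descref{C} via Soergel's categorification and \cite{EWpaper} is exactly the input the paper relies on, and you are candid that \descref{B} and \descref{D} are not established by your Lefschetz/restriction-map program. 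So there is no error of reasoning, but as a proof of the stated conjecture it is necessarily incomplete, and you could have strengthened the partial picture by citing what is actually known for \descref{B} (Irving's socle-filtration argument in the Weyl-group case and du Cloux's verification in the remaining finite types, not merely ``the crystallographic case by such arguments'').

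One substantive caution on the step you treat as formal: the reduction to irreducible $(W,S)$ is immediate for \descref{A} and \descref{C} from the factorizations $P_{y,w}=P_{y',w'}P_{y'',w''}$ and $h_{x,y;z}=h_{x',y';z'}h_{x'',y'';z''}$, but for \descref{B} and \descref{D} it is not purely formal. For \descref{B} one needs an identity such as
\[
P_{y,w}-P_{z,w}=\tfrac{1}{2}\(P_{y',w'}+P_{z',w'}\)\(P_{y'',w''}-P_{z'',w''}\)+\tfrac{1}{2}\(P_{y',w'}-P_{z',w'}\)\(P_{y'',w''}+P_{z'',w''}\),
\]
together with Property \descref{A} for the factors, and for \descref{D} one needs that the $h_{x,y;z}$ are not just unimodal but \emph{balanced} (symmetric) in $v$ --- this is Proposition \ref{h-vv} and Corollary \ref{vv-cor} in the paper --- since a product of unimodal polynomials need not be unimodal without the symmetry hypothesis (Lemma \ref{bal-lem}). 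Your one-sentence reduction silently assumes these points; if you intend unimodality of $v^d h_{x,y;z}$ to follow from a hard Lefschetz operator, the same mechanism would in fact give the symmetric-unimodal statement, and you should say so, because that is what the multiplicative reduction actually requires.
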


%The  structure constants of the Kazhdan-Lusztig basis have the property that if $h_{x,y;z}$ has degree $d$ in $v$, then $v^dh_{x,y;z}$  is   a polynomial in $v$ of degree $2d$, which is symmetric in the sense that its coefficients of degrees $i$ and $2d-i$ coincide for every $i$; see Corollary \ref{vv-cor}. More appears to be true:

Elias and Williamson's recent proof of Soergel's conjecture \cite{EWpaper} shows that at least Properties \descref{A} and \descref{C}  hold for all Coxeter systems. 
Property \descref{B} is known to hold for all finite Coxeter systems by work of Irving \cite{Ir} and du Cloux \cite{Fokko}. 
Property \descref{D} has received the least attention in the literature. 
%It is expected that this statement can be proved using the  methods introduced in \cite{EWpaper}; 
Computations of du Cloux \cite{Fokko} at least show that this property holds for dihedral Coxeter systems and in types $H_3$ and $H_4$. Using du Cloux's program {\tt Coxeter} \cite{Coxeter} we have in turn  checked (appealing to Proposition \ref{reduction-prop}  below) that Property \descref{D} holds 
for all finite Coxeter systems whose irreducible factors have rank at most five. (Recall that the \emph{rank} of $(W,S)$ is the size of $S$.)

%Concerning this last conjecture, we remark that if $h_{x,y;z}$ has degree $d$ in $v$, then $v^dh_{x,y;z}$  is always  a polynomial in $q$ of degree $d$, which is symmetric in the sense that its coefficients of degrees $i$ and $d-i$ coincide for every $i$; see Corollary \ref{vv-cor}.

%The parallels between Theorem-Definitions \ref{kl-thmdef} and \ref{twisted-thmdef} suggest some obvious analogues of the conjectures in the previous section, but these statements turn out not  to be the right ones.
The appropriate ``twisted'' analogues of the preceding properties are not the obvious ones suggested by the formal parallels between Theorem-Definitions \ref{kl-thmdef} and \ref{lv-thmdef3}. 
%(Those statements, that $\(\Psig_{y,w}\)_{y,w \in \I} \subset \NN[q]$, etc., turn out to be false.)
Instead we proceed as follows.
Define $P^+_{y,w}, P^-_{y,w} \in \QQ[q]$ by \[P^\pm_{y,w} = \tfrac{1}{2} \( P_{y,w} \pm \Psig_{y,w}\)\qquad\text{for each }y,w \in \I.\] While  the polynomials $\Psig_{y,w}$ may have negative coefficients, Lusztig proves that the polynomials $P^\pm_{y,w}$ actually have integer coefficients  \cite[Theorem 9.10]{LV2}.
Consider the following conjectural properties related to these polynomials:

\begin{conjecture}{A$'$}\label{A$'$}
Both $P^+_{y,w}$ and $P^-_{y,w}$ have nonnegative  coefficients for all $y,w \in \I$.
\end{conjecture}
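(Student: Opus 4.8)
The plan is to deduce Property~A$'$ for an arbitrary Coxeter system from the case of irreducible systems, and then to settle the irreducible finite types using Lusztig and Vogan's geometric results for the crystallographic ones and a direct computation for the rest. For infinite non-crystallographic systems the conjecture seems genuinely open, so this strategy establishes Property~A$'$ only in the finite case.

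The reduction step rests on understanding how the twisted Kazhdan--Lusztig polynomials behave under direct products. If $(W,S,*)$ is the product of $(W_1,S_1,*_1)$ and $(W_2,S_2,*_2)$, so that every $w\in\I$ has the form $w=(w_1,w_2)$, one expects the module $\cM_{q^2}$ to be the tensor product over $\cA$ of the corresponding modules for $W_1$ and $W_2$, over the Hecke algebra $\cH_{q^2}$ of $(W,S)$ --- itself the tensor product over $\cA$ of the two factor Hecke algebras --- with $a_{(w_1,w_2)}$ identified with $a_{w_1}\otimes a_{w_2}$ and with compatible bar operators. Granting this, the uniqueness clause of Theorem-Definition~\ref{lv-thmdef3} forces $A_{(w_1,w_2)}=A_{w_1}\otimes A_{w_2}$, and comparing coefficients (with $\ell(w)=\ell(w_1)+\ell(w_2)$) gives the product formula $\Psig_{(y_1,y_2),(w_1,w_2)}=\Psig_{y_1,w_1}\cdot\Psig_{y_2,w_2}$; the corresponding identity for the ordinary $P_{y,w}$ is classical. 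Substituting $P_{y_i,w_i}=P^+_{y_i,w_i}+P^-_{y_i,w_i}$ and $\Psig_{y_i,w_i}=P^+_{y_i,w_i}-P^-_{y_i,w_i}$ and multiplying out, the two product formulas give
\[ \begin{aligned} P^+_{(y_1,y_2),(w_1,w_2)} &= P^+_{y_1,w_1}P^+_{y_2,w_2}+P^-_{y_1,w_1}P^-_{y_2,w_2}, \\ P^-_{(y_1,y_2),(w_1,w_2)} &= P^+_{y_1,w_1}P^-_{y_2,w_2}+P^-_{y_1,w_1}P^+_{y_2,w_2}, \end{aligned} \]
so nonnegativity of the $P^\pm$ for the two factors propagates to their product; this is (the relevant part of) Proposition~\ref{reduction-prop}, and for Property~A$'$ alone it does not even require Property~\descref{A} of the factors. (One must also allow $*$ to interchange two isomorphic irreducible factors, but there $\cM_{q^2}$ is a form of the Hecke algebra of a single factor, so Property~A$'$ reduces to Property~\descref{A}, which holds by Elias and Williamson's theorem \cite{EWpaper}.)

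It then suffices to treat irreducible $(W,S)$. When $(W,S)$ is crystallographic, Property~A$'$ is a theorem of Lusztig and Vogan \cite{LV2}, whose geometric argument realizes $P^+_{y,w}$ and $P^-_{y,w}$ as Poincar\'e polynomials of intersection cohomology sheaves attached to orbits on an associated symmetric variety. This leaves only the finite irreducible non-crystallographic types: the dihedral groups $I_2(m)$ for $m=5$ and $m\ge 7$, and the groups $H_3$ and $H_4$, each equipped with every diagram automorphism of order at most two (the transposition of the two generators for $I_2(m)$, and only the identity for $H_3$ and $H_4$, whose diagrams are rigid). For $I_2(m)$ the ordinary polynomials $P_{y,w}$ are trivial and $\I$ is small, so I would compute $\Psig_{y,w}$ by hand from the multiplication rule of Theorem-Definition~\ref{lv-thmdef1} and the defining properties of Theorem-Definition~\ref{lv-thmdef3}, reading off $P^\pm_{y,w}$ in closed form. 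For $H_3$ and $H_4$ I would enumerate all pairs $y\le w$ in $\I$ and compute $\Psig_{y,w}$ and $P_{y,w}$ with du Cloux's program {\tt Coxeter} \cite{Coxeter}, extended as in \cite{MyCode}, and then check that every coefficient of $\tfrac12(P_{y,w}\pm\Psig_{y,w})$ is nonnegative.

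I expect the main obstacle to be twofold. Conceptually, the tensor-product compatibility of the Lusztig--Vogan construction must be genuinely proved, not merely read off from the Hecke-algebra analogy: one has to check that the bar involution of Theorem-Definition~\ref{lv-thmdef2} and the characterizing properties of Theorem-Definition~\ref{lv-thmdef3} really descend to the tensor product, although the uniqueness statements make the outcome easy to guess. Computationally, the bottleneck is $H_4$, which has order $14400$ and a correspondingly large set $\I$ of twisted involutions, so that building $\cM_{q^2}$ and computing all of its twisted Kazhdan--Lusztig polynomials to the required precision is by far the most expensive part of the verification. Finally, even carrying all of this out proves Property~A$'$ only when every irreducible component of $(W,S)$ is finite or crystallographic; the conjecture for infinite non-crystallographic Coxeter systems appears to lie outside the scope of these methods.
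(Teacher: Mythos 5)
Your proposal takes essentially the same route as the paper: reduce to irreducible factors via the product formula for the twisted polynomials (Lemma \ref{split-lem} and Proposition \ref{reductionprime-prop}), treat a $*$-swapped pair of isomorphic factors by identifying $\cM_{q^2}$ with the Hecke algebra of a single factor (Proposition \ref{mod-prop}), invoke Lusztig--Vogan for the crystallographic types, settle the dihedral case by direct computation (Theorem \ref{dihedral-AB}), and verify $H_3$ and $H_4$ with the extended {\tt Coxeter} code, which is exactly how Theorem \ref{last-thm}(a) is obtained. The one point you gloss is that in the swapped-factor case the identification gives $P^\pm_{(y,y^{-1}),(w,w^{-1})}=\tfrac{1}{2}\bigl(P_{y,w}(q)^2\pm P_{y,w}(q^2)\bigr)$, so nonnegativity is not literally ``Property \descref{A} of the factor'' but also needs the easy observation of Lemma \ref{myown-lem}(b) that $\tfrac{1}{2}\bigl(f(q)^2\pm f(q^2)\bigr)\in\NN[q]$ whenever $f\in\NN[q]$ has integer coefficients.
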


%This statement refines Conjecture \descref{A} in the sense that $P^+_{y,w} + P^-_{y,w} = P_{y,w}$ for $y,w \in \I$. We  introduced in \cite{EM1} the following stronger conjecture, which is likewise a refinement of Conjecture \descref{B}.

\begin{conjecture}{B$'$}\label{B$'$} The   polynomials $P^\pm_{y,w}$ are decreasing for fixed $w$, in the sense that the differences
$P^+_{y,w} - P^+_{z,w}$ and  $P^-_{y,w} - P^-_{z,w}$ have nonnegative  coefficients whenever $y,z,w \in \I$ and $y \leq z$.
\end{conjecture}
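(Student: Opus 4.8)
The plan is twofold. Since Property B$'$ in full generality is open, I would (i) prove a reduction showing that B$'$ for an arbitrary Coxeter system follows from Properties A$'$ and B$'$ for its irreducible factors together with the ordinary Properties \descref{A} and \descref{B}, and (ii) verify the irreducible finite cases --- in particular types $H_3$ and $H_4$ --- by direct computation. For the reduction, I would write $(W,S)=\prod_i (W_i,S_i)$ as a product of irreducible Coxeter systems; the automorphism $*$ permutes the factors with orbits of size $1$ or $2$, and I would group them into $*$-orbits. A singleton orbit $\{W_i\}$ contributes the twisted-involution module attached to the restriction of $*$ to $W_i$; a two-element orbit $\{W_i,W_j\}$ with $W_i\cong W_j$ contributes, under the length-doubling bijection between its twisted involutions and $W_i$, essentially the regular $\cH_{q^2}(W_i)$-module, so that its $\Psig$ and $P$ are reparametrizations of ordinary Kazhdan-Lusztig polynomials of $W_i$. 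In every case $\M$ factors as a tensor product over the orbits of modules compatible with the standard bases; matching the bar operators (via the uniqueness in Theorem-Definition \ref{lv-thmdef2}) and then the defining properties (a)--(c) (via the uniqueness in Theorem-Definition \ref{lv-thmdef3}) shows that $\Psig_{y,w}$ is multiplicative across orbits, and passing to $P^\pm=\tfrac12(P\pm\Psig)$ yields
\[ P^+_{y,w}=\sum_{|J|\text{ even}}\ \prod_{i\notin J}P^+_{y_i,w_i}\ \prod_{i\in J}P^-_{y_i,w_i},\qquad P^-_{y,w}=\sum_{|J|\text{ odd}}\ \prod_{i\notin J}P^+_{y_i,w_i}\ \prod_{i\in J}P^-_{y_i,w_i}, \]
where $J$ runs over subsets of the set of $*$-orbits and $y_i,w_i$ denote the components of $y,w$ in orbit $i$.

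To deduce B$'$, I would fix $y\le z\le w$ in $\I$, using that the Bruhat order on $\I$ is the restriction of that on $W$, so $y_i\le z_i$ for every orbit. Subtracting the displayed expression for $z$ from the one for $y$ and telescoping each product one factor at a time, $P^+_{y,w}-P^+_{z,w}$ and $P^-_{y,w}-P^-_{z,w}$ become sums of terms, each a product of several factors of the form $P^\pm_{y_i,w_i}$ or $P^\pm_{z_i,w_i}$ times exactly one factor of the form $P^\pm_{y_i,w_i}-P^\pm_{z_i,w_i}$. Properties A$'$ and \descref{A} for the irreducible factors make the former have nonnegative coefficients; Property B$'$ for the irreducible factors --- and, for a swapped pair, ordinary Property \descref{B} --- makes the latter have nonnegative coefficients. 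Hence every term, and therefore each of $P^\pm_{y,w}-P^\pm_{z,w}$, has nonnegative coefficients. This is exactly the reduction recorded in Proposition \ref{reduction-prop}.

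For the irreducible finite cases --- the dihedral types, $H_3$, and $H_4$ --- I would enumerate $\I$ and compute the ordinary $P_{y,w}$ together with the twisted $\Psig_{y,w}$, the latter from the recursion implicit in Theorem-Definition \ref{lv-thmdef3} and the $T_s$-action of Theorem-Definition \ref{lv-thmdef1} (equivalently, the twisted analogue of the multiplication formula of Theorem \ref{kl-thm}, with $\mu^\sigma$-coefficients), using the extensions \cite{MyCode} to du Cloux's program {\tt Coxeter} \cite{Coxeter}. I would then form $P^\pm_{y,w}=\tfrac12(P_{y,w}\pm\Psig_{y,w})$, which are integral by \cite[Theorem 9.10]{LV2}, and check for every comparable pair $y\le z$ with common $w$ that $P^+_{y,w}-P^+_{z,w}$ and $P^-_{y,w}-P^-_{z,w}$ have nonnegative coefficients. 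Since ordinary Property \descref{B} is already known in finite type (Irving, du Cloux), $P_{y,w}-P_{z,w}$ is automatically nonnegative, so it suffices to verify that the absolute value of each coefficient of $\Psig_{y,w}-\Psig_{z,w}$ is bounded by the corresponding coefficient of $P_{y,w}-P_{z,w}$.

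The hard part is of two kinds. On the theoretical side, the only step in the reduction that is not purely formal is identifying the tensor decomposition of $\M$ explicitly and showing that a swapped pair of irreducible factors collapses $\Psig$ to an ordinary Kazhdan-Lusztig polynomial; making these identifications precise via the uniqueness clauses in Theorem-Definitions \ref{lv-thmdef2} and \ref{lv-thmdef3} is where the real work of the reduction lies. On the computational side, type $H_4$ is the bottleneck: $|W|=14400$ and $|\I|$ runs into the thousands, so one must construct and store a large triangular array of polynomials, compute all the $\mu^\sigma$-invariants, and sweep over every comparable pair in the induced Bruhat order on $\I$ --- scale and arithmetic verification, rather than any new idea, being the real obstacle there.
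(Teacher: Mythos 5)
Your proposal is correct and takes essentially the same route as the paper: a reduction to irreducible factors in which a $*$-swapped pair of isomorphic factors collapses to ordinary Kazhdan--Lusztig data via the length-doubling bijection (Proposition \ref{mod-prop} and Lemma \ref{recovery-thm2}, where the swapped-pair difference is handled by writing $\tfrac12\((f^2-g^2)\pm(f(q^2)-g(q^2))\)$ as $g(f-g)$ plus a term controlled by Lemma \ref{myown-lem}), multiplicativity of $P$ and $\Psig$ across commuting $*$-stable pieces (Lemma \ref{split-lem}, organized in the paper as a rank induction in Proposition \ref{reductionprime-prop} --- which is the result you mean, rather than Proposition \ref{reduction-prop} --- instead of your one-shot orbit formula), followed by machine verification of the finite irreducible cases including $H_3$ and $H_4$. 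The one substantive difference is rank two: the paper does not rely on computation there but proves $\Psig_{y,w}=1$ uniformly for all finite dihedral types $I_2(m)$ (Theorem \ref{dihedral-AB}), which a case-by-case computation as in your plan could not cover for infinitely many $m$.
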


To give analogues of Properties \descref{C} and \descref{D}, for each $x \in W$ and $y \in \I$ define $\bigl( \wt h_{x,y;z}\bigr)_{z \in W}\subset  \cA$ and $ \bigl(h^\sigma_{x,y;z}\bigr)_{z \in \I} \subset  \cA$ as the Laurent polynomials satisfying
\be\label{h-def} c_x c_y c_{{x^*}^{-1}} = \sum_{z \in W} \wt h_{x,y;z} c_z
\qquand
C_x \A_y = \sum_{z \in\I} h^\sigma_{x,y;z}\A_z.
\ee
Note  that $c_x,c_y,c_z \in \cH_q$  while $C_x \in \cH_{q^2}$ and $A_y \in \cM_{q^2}$. Now, %given $x \in W$ and $y,z \in \I$, 
 define $h^+_{x,y;z}, h^-_{x,y;z} \in \QQ[v,v^{-1}]$ by \be\label{hpm-def}
 h^\pm_{x,y;z}  = \tfrac{1}{2}\( \wt h_{x,y;z} \pm  h^\sigma_{x,y;z}\)
 \qquad\text{for each $x \in W$ and $y,z \in \I$.}
 \ee Though not clear \emph{a priori}, these Laurent polynomials too always  have integer coefficients \cite[Proposition 2.11]{EM1}. % and we have the following analogues of Conjectures \descref{C} and \descref{D}.
 
\begin{conjecture}{C$'$}\label{C$'$}
Both $h^+_{x,y;z}$ and $h^-_{x,y;z}$ have nonnegative  coefficients for all $x \in W$ and $y,z \in \I$.
\end{conjecture}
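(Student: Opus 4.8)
The plan is to establish Property~C$'$ in two stages: first reduce it to the case of irreducible Coxeter systems, and then verify it by direct computation in the finite cases within reach, in particular types $H_3$ and $H_4$.

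For the reduction, suppose $(W,S)=(W_1,S_1)\times(W_2,S_2)$ and that $*$ preserves each factor, so that $*=*_1\times*_2$ and $\I=\I_1\times\I_2$. One checks that under the resulting tensor factorizations $\cH_{q^2}\cong\cH^{(1)}_{q^2}\otimes\cH^{(2)}_{q^2}$ and $\M\cong\M^{(1)}\otimes\M^{(2)}$ the distinguished elements multiply, $C_{(w_1,w_2)}=C_{w_1}\otimes C_{w_2}$ and $A_{(w_1,w_2)}=A_{w_1}\otimes A_{w_2}$ (the first because the Kazhdan-Lusztig polynomials of a product factor, the second by the uniqueness in Theorem-Definition~\ref{lv-thmdef3}, since the module structure and the bar involution factor). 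Hence the structure constants factor, $\wt h_{x,y;z}=\wt h^{(1)}_{x_1,y_1;z_1}\,\wt h^{(2)}_{x_2,y_2;z_2}$ and $h^\sigma_{x,y;z}=h^{\sigma,(1)}_{x_1,y_1;z_1}\,h^{\sigma,(2)}_{x_2,y_2;z_2}$, and writing $h^{\pm,(i)}$ for the corresponding half-sum and half-difference (as in \eqref{hpm-def}) of the $i$-th factor, a short manipulation gives
\[
h^+_{x,y;z}=h^{+,(1)}h^{+,(2)}+h^{-,(1)}h^{-,(2)},\qquad
h^-_{x,y;z}=h^{+,(1)}h^{-,(2)}+h^{-,(1)}h^{+,(2)}.
\]
Thus Property~C$'$ for $(W,S)$ follows from Property~C$'$ for each irreducible factor; this step uses nothing about the ordinary polynomials beyond what is needed to make sense of the factorizations above.

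It then remains to treat irreducible $(W,S)$. For the finite irreducible systems of small enough rank --- certainly $H_3$ and $H_4$ --- I would compute the Kazhdan-Lusztig polynomials $P_{y,w}(q^2)$, hence the elements $C_w\in\cH_{q^2}$; then the bar-invariant elements $A_w\in\M$, via the multiplication rule of Theorem-Definition~\ref{lv-thmdef1} together with the formula $\overline{a_w}=(-1)^{\ell(w)}(T_{w^{-1}})^{-1}a_{w^{-1}}$; and finally expand $c_xc_yc_{{x^*}^{-1}}$ in $\cH_q$ and $C_xA_y$ in $\M$ to read off $\wt h_{x,y;z}$ and $h^\sigma_{x,y;z}$. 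Property~C$'$ is then the assertion that $\wt h_{x,y;z}\pm h^\sigma_{x,y;z}$ has nonnegative coefficients, which is checked by enumeration over triples. (One should note that $\wt h_{x,y;z}=\sum_{w\in W}h_{x,y;w}\,h_{w,{x^*}^{-1};z}$ already has nonnegative coefficients granting Property~\descref{C}, so the genuinely new content is the comparison with $h^\sigma_{x,y;z}$.) In practice the computation is carried out with du Cloux's program {\tt Coxeter} \cite{Coxeter} together with the extensions \cite{MyCode}; the size of $W$ makes $H_4$, with $|W|=14400$, the largest feasible case.

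The main obstacle is that this approach is silent on the infinite families --- the dihedral systems $I_2(m)$ for all $m$, the classical series $A_n,B_n,D_n$, and all infinite $W$. The dihedral case might yield to an explicit formula for $h^\sigma$, but in general there is at present no geometric or Soergel-bimodule model of the twisted basis from which positivity of its structure constants could be deduced: Lusztig and Vogan's geometry of $K$-orbits on the flag variety supplies such a model only when $(W,S)$ is crystallographic, and even there it is Property~A$'$, not Property~C$'$, that has so far been extracted from it. Accordingly, beyond the reduction to the irreducible case and the finite verifications, Property~C$'$ should be expected to remain conjectural.
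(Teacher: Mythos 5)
Your overall strategy --- factor the problem over irreducible components, verify what you can by machine in finite types, and concede that the general statement remains conjectural --- is the same as the paper's (Theorem \ref{x-thm} together with the computations recorded in Table \ref{maxcoeff-h-tbl}, yielding Theorem \ref{last-thm}(c)). However, your reduction step has a genuine gap: you only treat involutions $*$ that preserve each factor. By the paper's convention, ``Property \descref{C$'$} holds for $(W,S)$'' means it holds for \emph{every} $S$-preserving involution, and an $S$-preserving involution of a reducible system may interchange two isomorphic irreducible factors. In that case, say $(W,S)=(W',S')\times(W',S')$ with $(x,y)^*=(y,x)$, the set $\I$ is not a product of twisted-involution sets of the factors but the diagonal copy $\{(w,w^{-1}):w\in W'\}$; your tensor factorization $A_{(w_1,w_2)}=A_{w_1}\otimes A_{w_2}$ has no meaning there, and the identities $h^{\pm}=h^{+,(1)}h^{\pm,(2)}+h^{-,(1)}h^{\mp,(2)}$ do not apply. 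The paper handles this case separately (Proposition \ref{mod-prop}, Lemma \ref{lastbit-lem}, Proposition \ref{recovery-thm}): there $h^\pm_{x,y;z}$ takes the form $\tfrac{1}{2}\bigl(f(v)^2\pm f(v^2)\bigr)$ with $f=f_{w,x,y;z}$ an ordinary triple-product structure constant of $(W',S')$, and nonnegativity is deduced from Property \descref{C} for $(W',S')$ via Lemma \ref{myown-lem}(b). So your remark that the reduction ``uses nothing about the ordinary polynomials'' is precisely what fails: the full reduction (Proposition \ref{reductionprime-prop}) needs Property C of the irreducible factors as an input alongside Property C$'$, which is why Theorem \ref{x-thm} hypothesizes both.

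Apart from this, your argument tracks the paper. In the factor-preserving situation your factorizations agree with Lemma \ref{split-lem}(c) and the identity used in Proposition \ref{reductionprime-prop}; the computational scheme is in essence the paper's, although in practice $P^\sigma_{y,w}$ and $h^\sigma_{x,y;z}$ are computed from the recurrences of Corollary \ref{main-recurrence}, Theorem \ref{mult-thm}, and \eqref{hsig-reccc} rather than by solving directly for bar-invariant elements, and the verified range is all irreducible finite types of rank $1,3,4,5$ together with $A_6$ and $I_2(m)$ for $m\leq 100$, not only $H_3$ and $H_4$. Your closing assessment that Property \descref{C$'$} remains open in general matches the paper's.
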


%Like the structure constants $h_{x,y;z}$, the Laurent polynomials $h^\pm_{x,y;z}$ also belong to the subring $\ZZ[v+v^{-1}] \subset \cA$ (see Proposition \ref{rest-vv}), and we have this analog of Conjecture \descref{D}, to which no counter-examples are yet known:

\begin{conjecture}{D$'$}\label{D$'$}
For any $x \in W$ and $y,z \in \I$, if the Laurent polynomials $h^+_{x,y;z}$ and $h^-_{x,y;z}$ have degrees $d_+$ and $d_-$ in $v$, then $v^{d_+} h^+_{x,y;z}$ and $v^{d_-} h^-_{x,y;z}$ are  unimodal polynomials in $q=v^2$.
\end{conjecture}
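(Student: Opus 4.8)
Since Property~\descref{D$'$} is not known to hold in general, the realistic plan I would follow has two steps: (1)~reduce the statement, for an arbitrary triple $(W,S,*)$, to the case of \emph{irreducible} $(W,S,*)$, granting the ordinary Property~\descref{D}; and (2)~verify the reduced statement by direct computation in the finite cases of interest, notably the non-crystallographic types $H_3$ and $H_4$.

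For step~(1) I would argue as follows. Suppose $(W,S)=(W_1,S_1)\times(W_2,S_2)$ with $*$ restricting to automorphisms $*_1,*_2$ of the two factors, and write $\wt h_i,h^\sigma_i,h^\pm_i$ for the analogous structure constants of $(W_i,S_i,*_i)$. Then $\cH_{q^2}$ and $\cM_{q^2}$ factor as tensor products over $\cA$ compatibly with their standard bases, bar operators, and hence Kazhdan-Lusztig bases $\(C_w\)_{w\in W}$ and $\(A_w\)_{w\in\I}$, so the structure constants of \eqref{h-def} are multiplicative under the corresponding factorization of indices: $\wt h=\wt h_1\wt h_2$ and $h^\sigma=h^\sigma_1h^\sigma_2$. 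Substituting $\wt h_i=h^+_i+h^-_i$ and $h^\sigma_i=h^+_i-h^-_i$ into $h^\pm=\tfrac12(\wt h\pm h^\sigma)$ gives
\[ h^+ = h^+_1h^+_2 + h^-_1h^-_2 \qquand h^- = h^+_1h^-_2 + h^-_1h^+_2. \]
All of $\wt h,h^\sigma,h^\pm$ are bar-invariant, being built from bar-invariant elements, so after the normalization $v^{d}(\cdot)$ of the statement---which lands in $\ZZ[q]$, just as in Property~\descref{D}---each is a \emph{palindromic} polynomial in $q$; thus ``unimodal'' here really means ``palindromic and unimodal''. Granting Properties~\descref{D$'$} and~\descref{C$'$} for the factors, each normalized $h^\pm_i$ is a palindromic, unimodal polynomial in $q$ with nonnegative coefficients; by the classical fact that a product of such polynomials is again palindromic and unimodal, the (suitably shifted) products $h^+_1h^+_2$ and $h^-_1h^-_2$ are palindromic unimodal, and a short argument using bar-invariance together with the fact that the $v^{d}(\cdot)$-normalizations lie in $\ZZ[q]$ shows the two shifted summands are palindromic about the same center. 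Since a sum of palindromic unimodal polynomials with nonnegative coefficients and a common center is palindromic and unimodal, this yields Property~\descref{D$'$} for $(W,S,*)$; the argument for $h^-$ is identical, and the whole reduction parallels Proposition~\ref{reduction-prop}.

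For step~(2), I would first note that when $(W,S,*)$ is finite irreducible the automorphism $*$ is either trivial or the unique nontrivial diagram automorphism; since the Coxeter diagrams of $H_3$ and $H_4$ are rigid, $*$ is trivial there and $\I$ is the set of involutions of $W$. I would then run the following pipeline: compute $P_{y,w}$ with du Cloux's \texttt{Coxeter}~\cite{Coxeter}; compute $\Psig_{y,w}$ from the recursion furnished by Theorem-Definitions~\ref{lv-thmdef1}--\ref{lv-thmdef3}, as implemented in the extensions~\cite{MyCode}; assemble $\wt h_{x,y;z}$ from products of three Kazhdan-Lusztig basis elements of $\cH_q$ and $h^\sigma_{x,y;z}$ from the $\cH_{q^2}$-action on $\cM_{q^2}$; form $h^\pm_{x,y;z}$ via \eqref{hpm-def}; and finally inspect each coefficient sequence of $v^{d_\pm}h^\pm_{x,y;z}$ for palindromicity and unimodality.

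The step I expect to be the true obstacle is not the reduction, which is fairly routine once the preceding facts are in hand, but the absence of a base case for the induction: no conceptual argument for unimodality is known even for the ordinary structure constants $h_{x,y;z}$ (Property~\descref{D}), so in full generality step~(1) rests on input one cannot currently supply. Within the cases one can handle, the verification in step~(2) is still a substantial computation---in type $H_4$ already, where $|W|=14400$ and one ranges over triples $(x,y,z)$.
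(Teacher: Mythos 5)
Your overall strategy is the same as the paper's: factor the structure constants over a decomposition of $(W,S)$ (the paper's Lemma \ref{split-lem}(c) gives exactly your $\wt h=\wt h_1\wt h_2$ and $h^\sigma=h^\sigma_1h^\sigma_2$, hence your identities for $h^\pm$, which appear verbatim in the proof of Proposition \ref{reductionprime-prop}), and then verify the irreducible finite cases by machine. The genuine gap is in your reduction step: you assume that $*$ restricts to an involution of each factor. An $S$-preserving involution may instead \emph{interchange} two isomorphic irreducible factors, and this case is covered neither by your tensor-factorization argument nor by your computational step, since $(W'\times W',\mathrm{swap})$ is not irreducible. This is precisely where the hypothesis ``ordinary Property \descref{D} for the factors'' enters --- you grant it but never actually use it. In the swap case (Proposition \ref{mod-prop}, Proposition \ref{recovery-thm}(c)) one has $h^\pm_{x,y;z}=\tfrac{1}{2}\left(f(v)^2\pm f(v^2)\right)$, where $f=f_{w,x,y;z}$ is a structure constant of a \emph{triple} product $c_wc_xc_y$ in $(W',S')$. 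Unimodality here, especially for the minus sign, cannot be deduced by multiplying and adding nonnegative unimodal polynomials because of the subtraction; the paper needs the ad hoc Lemma \ref{myown-lem}(c) (write a balanced unimodal $f$ as a nonnegative combination of the blocks $v^{-d}(q^i+\dots+q^{d-i})$ and check the resulting atoms), plus Property \descref{D} and the parity/balancedness results (Proposition \ref{h-vv}, Lemma \ref{bal-lem}) to know that $f$ itself is balanced unimodal. Without this branch, your induction on the number of factors, which parallels Proposition \ref{reductionprime-prop}, is missing one of its two base cases.

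A smaller point: bar-invariance of $\wt h$, $h^\sigma$, $h^\pm$ only gives symmetry of the coefficient sequence under $v\mapsto v^{-1}$; the claim that $v^{d_\pm}h^\pm_{x,y;z}$ actually lies in $\ZZ[q]$ (i.e.\ that only exponents of $v$ of a single parity occur) is the content of Proposition \ref{rest-vv} and Corollary \ref{vv-cor}, proved by an inductive argument in $u=v+v^{-1}$ using the recurrences \eqref{h-rec} and \eqref{hsig-reccc}, not by bar-invariance alone. That same parity statement is what legitimizes your ``common center'' step when adding the two summands $h^+_1h^\pm_2$ and $h^-_1h^\mp_2$ (this is Lemma \ref{bal-lem}: a sum of balanced unimodal Laurent polynomials is balanced unimodal precisely when their degrees have equal parity, and here the parity depends only on $\ell(y')+\ell(z')$ and $\ell(y'')+\ell(z'')$, not on the signs). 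Your computational step (2) and the remark that $*$ is trivial in types $H_3$ and $H_4$ agree with what the paper does.
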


The main purpose of this paper is to provide  evidence to support the following conjecture:

\begin{conj} Properties A$'$, B$'$, C$'$, and D$'$ hold for all triples $(W,S,*)$ where $(W,S)$ is a Coxeter system and $* \in \Aut(W)$ is an $S$-preserving involution.
\end{conj}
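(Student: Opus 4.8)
The plan is as follows. Properties A$'$, B$'$, C$'$, D$'$ are not accessible in full generality by elementary means, so I would (i) reduce the conjecture for an arbitrary triple $(W,S,*)$ to the case where $(W,S)$ is irreducible, assuming the untwisted Properties \descref{A}, \descref{B}, \descref{C}, \descref{D}, and then (ii) settle the finite irreducible cases, chiefly by computation, leaving the infinite case as the genuinely open part.

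For step (i) I would prove a product formula --- this is Proposition~\ref{reduction-prop} below. Since $*$ permutes $S$, it permutes the connected components of the Coxeter graph, so writing $(W,S)=\prod_i(W_i,S_i)$ each component is either fixed by $*$ or swapped with an isomorphic partner. Restricting $*$ to a fixed component gives an $S_i$-preserving involution there; for a swapped pair $(W_i,S_i)\cong(W_j,S_j)$ the twisted involutions supported on $W_i\times W_j$ are the diagonal-type elements $\{(w,(w^*)^{-1}):w\in W_i\}$, and the corresponding summand of $\cM_{q^2}$ is isomorphic, as a module over $\cH_{q^2}(W_i)\otimes\cH_{q^2}(W_j)$, to $\cH_{q^2}(W_i)$ with its Kazhdan--Lusztig basis. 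In either case $\cM_{q^2}$ for $(W,S,*)$ is the $\cA$-tensor product of the modules attached to the factors, the standard basis $a_w$ and the twisted basis $A_w$ tensor accordingly, and hence both $\Psig_{y,w}$ and the structure constants $h^\sigma_{x,y;z}$ of Theorem-Definition~\ref{lv-thmdef1} factor as products over the components; the ordinary $P_{y,w}$ and $h_{x,y;z}$ factor the same way. An easy sign computation then shows that $P^+$, $P^-$, $h^+$, $h^-$ for $(W,S,*)$ are built from the $P^\pm$, $h^\pm$ of the $*$-fixed factors and the ordinary $P$, $h$ of the remaining factors by multiplication and addition. Nonnegativity of coefficients is preserved by these operations, so Properties A$'$, B$'$, C$'$ for $(W,S,*)$ follow from A$'$, B$'$, C$'$ for the irreducible factors together with untwisted \descref{A}, \descref{B}, \descref{C} (which hold for finite factors by Elias--Williamson \cite{EWpaper}, Irving \cite{Ir}, and du Cloux \cite{Fokko}). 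Property D$'$ is more delicate: all the Laurent polynomials in play are palindromic in $v$ because the relevant bases are bar-invariant, but a sum of palindromic unimodal polynomials of \emph{different} degrees need not be unimodal, so here one must show that the degrees of the competing products agree or else argue by hand --- this is the one point of the reduction that is not formal.

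For step (ii) I would verify Properties A$'$--D$'$ for each irreducible finite Coxeter system and each $S$-preserving $*$. The new, non-crystallographic types $I_2(m)$, $H_3$, $H_4$ I would check by explicit computation, using the extension \cite{MyCode} of du Cloux's program {\tt Coxeter} \cite{Coxeter}: compute $\Psig_{y,w}$ from the recursive characterization of Theorem-Definition~\ref{lv-thmdef3} and the $h^\sigma_{x,y;z}$ from $C_xA_y=\sum_{z\in\I}h^\sigma_{x,y;z}A_z$, form $P^\pm$ and $h^\pm$, and test the four positivity, unimodality, and monotonicity statements on the resulting coefficient sequences. For the crystallographic finite types, Lusztig--Vogan's geometric results \cite{LV2} already supply Property A$'$, and the remaining properties in low rank are again verified computationally.

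The main obstacle is the infinite case, where this approach leaves a genuine gap: after the reduction one is in exactly the situation of the untwisted Property \descref{D}, which is open beyond low rank, and a uniform proof of A$'$ and C$'$ would require a Hodge-theoretic input for the module $\cM_{q^2}$ parallel to the Elias--Williamson theory for $\cH_{q^2}$ --- presumably a category of ``twisted'' Soergel-type bimodules categorifying $\cM_{q^2}$ whose indecomposable objects realize the $A_w$. Absent that, the conjecture can only be confirmed for the irreducible factors one type at a time, which is what the present paper does in the finite case.
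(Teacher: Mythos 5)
Your plan is essentially the paper's own program: the statement is stated there only as a conjecture, and the paper's evidence consists of exactly your two steps --- a reduction to irreducible factors (Propositions \ref{reduction-prop} and \ref{reductionprime-prop}, with the swapped-component case handled via Proposition \ref{mod-prop}, Proposition \ref{recovery-thm}, and Lemma \ref{recovery-thm2}), plus computer verification of the finite irreducible types using \cite{MyCode}, with the infinite case left open. The one point you flag as non-formal --- matching degrees so that sums of balanced unimodal terms remain unimodal in Property D$'$ --- is in fact closed in the paper by the parity statements in $u=v+v^{-1}$ (Propositions \ref{h-vv} and \ref{rest-vv}, Corollary \ref{vv-cor}) combined with Lemmas \ref{bal-lem} and \ref{myown-lem}.
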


%The  status of these conjectures is less definitive than that of their   predecessors.
Lusztig and Vogan  have shown that Property \descref{A$'$} holds when $W$ is a Weyl group or affine Weyl group; see \cite[\S3.2 and \S7]{LV1}. In these cases, \cite[Section 5]{LV1} also 
mentions without proof that Property \descref{C$'$}  holds (when $*$ is trivial). The companion work \cite{EM1} establishes  Properties \descref{A$'$}, \descref{B$'$}, and \descref{C$'$}  in the case that  $(W,S)$ is a {universal} Coxeter system. %, i.e., one  in which the order of the product $st$ is infinite for any distinct $s,t \in S$. 
We did not  consider Property \descref{D$'$} in \cite{EM1}, but one can likely adapt the arguments in \cite{EM1} to also prove this fourth property in the universal case.
%The next section summarizes our main results continuing this progress.

\subsection{Outline of main results}
\label{summary-sect}

%The rest of this paper is divided into three sections.
%Section \ref{unique-sect} examines  the question of whether one is justified in considering the  module structure \eqref{module-def} to be ``canonical.''
%It turns out that one can replace the parameters in \eqref{module-def} in a number  of different ways and still be left with a valid $\cH_{q^2}$-module structure on $\cM_{q^2}$; see Theorem \ref{unique-thm}. 
%Nevertheless there is a sense in which the choice of parameters made in the statement of Theorem-Definition \ref{lv-thmdef} above \emph{is} canonical; see Corollary \ref{unique-cor}. . . 
%
%
Section \ref{recover-sect} reviews several formulas concerning the module $\cM_{q^2}$ from Lusztig's paper \cite{LV2}. Notably, Corollary \ref{main-recurrence} gives a recurrence for the polynomials $\Psig_{y,w}$. In Section \ref{recurrence-sect} we discuss how this recurrence leads to an algorithm capable of verifying our positivity properties in finite cases.

In Section \ref{reduction-sect} we show that the
eight properties in Section \ref{conj-sect} hold for all Coxeter systems if they hold for all \emph{irreducible} Coxeter systems. (For the definition of irreducibility, see \cite[Section 6.1]{Hu}.)
%Recall that the \emph{Coxeter diagram} of $(W,S)$ is the weighted graph with vertex set $S$ which has an edge between $s,t \in S$ if and only if $st\neq ts$. 
%The weight of the edge between $s,t \in S$ is then  the order of the product $st \in W$.
%If $S'\subset S$ is   the (nonempty) set of vertices of a connected component of this graph and $W' = \langle S'\rangle $ is the subgroup of $W$ which $S'$ generates, then $(W',S')$ is a Coxeter system which one calls an \emph{irreducible factor} of $(W,S)$. A Coxeter system is irreducible if it has exactly one irreducible factor.
Table \ref{types-tbl}  enumerates all triples $(W,S,*)$  where $(W,S)$ is an irreducible finite Coxeter system and $*\in \Aut(W)$ is an $S$-preserving involution.
In more detail,
let X be one of the letters A, B, C, or D, so that our positivity properties may each be referred to  as either Property X or Property X$'$.
We adopt the following convention:   whenever we say that Property X$'$ holds for a Coxeter system $(W,S)$, we 
mean that the Property X$'$ holds for $(W,S)$ with respect to all  choices of $S$-preserving involution $* \in \Aut(W)$.
Propositions \ref{reduction-prop} and \ref{reductionprime-prop} together imply the following.

\begin{theorem}\label{x-thm} Let $\text{X} \in \{ \text{A}, \text{B},\text{C},\text{D}\}$ and let $(W,S)$ be a Coxeter system. If Properties X and X$'$ hold for all irreducible factors of $(W,S)$, then Properties X and X$'$ hold for $(W,S)$.
\end{theorem}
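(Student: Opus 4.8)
The plan is to reduce everything to the irreducible-factor decomposition of $(W,S)$ and then to show that each of the eight properties behaves multiplicatively with respect to direct products. Recall that if $(W,S)$ decomposes as a direct product $(W_1,S_1)\times (W_2,S_2)$ of Coxeter systems, then $\cH_q \cong \cH_q^{(1)} \otimes_{\cA} \cH_q^{(2)}$, the Bruhat order is the product order, the length function is additive, and the Kazhdan--Lusztig polynomials factor as $P_{(y_1,y_2),(w_1,w_2)} = P_{y_1,w_1}\cdot P_{y_2,w_2}$; consequently the Kazhdan--Lusztig basis elements satisfy $c_{(w_1,w_2)} = c_{w_1}\otimes c_{w_2}$. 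By induction on the number of irreducible factors it suffices to treat the case of two factors, say an irreducible factor $(W_1,S_1)$ and the ``remainder'' $(W_2,S_2)$, and to show that if Property X (resp.\ X$'$) holds for each of $(W_1,S_1)$ and $(W_2,S_2)$, then it holds for the product. Since the $S$-preserving involution $*$ must permute the irreducible factors and restrict to an $S_i$-preserving involution on each factor it fixes (and pair up factors it swaps), one also has a compatible tensor decomposition $\cM_{q^2} \cong \cM_{q^2}^{(1)} \otimes_{\cA} \cM_{q^2}^{(2)}$ with $\I = \I_1 \times \I_2$, $A_{(w_1,w_2)} = A_{w_1}\otimes A_{w_2}$, and $\Psig_{(y_1,y_2),(w_1,w_2)} = \Psig_{y_1,w_1}\cdot \Psig_{y_2,w_2}$ (when a pair of factors is swapped by $*$, that block contributes ordinary Kazhdan--Lusztig data for a single copy, so it is still covered by Properties X).

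The key computational steps, carried out property by property, are then elementary once the product factorizations above are in hand. For Property \descref{A}: a product of polynomials with nonnegative coefficients has nonnegative coefficients, and the same for Property \descref{A$'$} after expanding $P^\pm_{(y_1,y_2),(w_1,w_2)}$ in terms of the factor-wise $P^\pm$ via the identity $\tfrac12(ab\pm cd) = \tfrac12(a\!+\!c)\tfrac12(b\!+\!d) + \tfrac12(a\!-\!c)\tfrac12(b\!-\!d)$ applied with $a=P_{y_1,w_1}$, $c=\Psig_{y_1,w_1}$, etc.; this writes $P^+$ (and similarly $P^-$) as a sum of products of factor-wise $P^\pm$'s, each of which has nonnegative coefficients by hypothesis. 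For Property \descref{B}, use the telescoping identity $P_{(y_1,y_2),w} - P_{(z_1,z_2),w} = (P_{y_1,w_1}-P_{z_1,w_1})P_{y_2,w_2} + P_{z_1,w_1}(P_{y_2,w_2}-P_{z_2,w_2})$ valid when $y_1\le z_1$ and $y_2\le z_2$, and the analogous expansion for $P^\pm$ combining the two displayed tricks. For Properties \descref{C} and \descref{D}, the structure constants factor as $h_{(x_1,x_2),(y_1,y_2);(z_1,z_2)} = h_{x_1,y_1;z_1}\cdot h_{x_2,y_2;z_2}$ directly from $c_{(x_1,x_2)}c_{(y_1,y_2)} = (c_{x_1}c_{y_1})\otimes(c_{x_2}c_{y_2})$; nonnegativity of a product gives \descref{C}, and for \descref{D} one uses the standard fact that a product of two polynomials in $q$ that are unimodal and have nonnegative coefficients is again unimodal (this is where the ``nonnegative coefficients'' hypothesis from Property \descref{C} is essential), after checking that the degree of $v^{d}h_{x,y;z}$ behaves additively so that the reindexing by $q=v^2$ is consistent. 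The primed versions \descref{C$'$} and \descref{D$'$} follow by the same $\tfrac12(ab\pm cd)$-expansion applied to $\wt h$ and $h^\sigma$ (using the product factorization $\wt h_{x,y;z} = \wt h_{x_1,y_1;z_1}\wt h_{x_2,y_2;z_2}$ coming from $c_x c_y c_{(x^*)^{-1}}$ respecting the tensor decomposition, and similarly $h^\sigma_{(x_1,x_2),(y_1,y_2);(z_1,z_2)} = h^\sigma_{x_1,y_1;z_1} h^\sigma_{x_2,y_2;z_2}$ from $C_x A_y$), reducing each of $h^\pm$ to a sum of products of factor-wise $h^\pm$'s.

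I expect the main obstacle to be \descref{D} and \descref{D$'$}: unimodality is not preserved under arbitrary products or sums, so the argument genuinely relies on the conjunction of \descref{C} with \descref{D} (nonnegative \emph{and} unimodal is product-closed), and for \descref{D$'$} one further needs to control how a sum of several unimodal-with-nonnegative-coefficients polynomials can fail to be unimodal --- so the expansion of $h^\pm$ as $h^+_{x_1,\cdot}h^+_{x_2,\cdot} + h^-_{x_1,\cdot}h^-_{x_2,\cdot}$ must be shown to be a sum of at most the ``right'' number of terms whose supports and peaks are compatible, or else one must argue directly that $h^\pm$ inherits the product structure in a way that keeps it a single unimodal block. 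A secondary technical point is bookkeeping the action of $*$ on the factors: one must verify that when $*$ swaps two isomorphic irreducible factors, the corresponding block of $\cM_{q^2}$ is isomorphic (as the relevant module) to the regular representation-type object on a single factor, so that the primed properties on that block reduce to the \emph{unprimed} Properties X for that factor; I would state this as a lemma and check it against the explicit multiplication rule in Theorem-Definition~\ref{lv-thmdef1}. Given the paper cites Propositions \ref{reduction-prop} and \ref{reductionprime-prop} as the source of Theorem \ref{x-thm}, the actual write-up will simply invoke those two propositions, whose proofs carry out exactly the factorization-and-multiplicativity argument sketched here.
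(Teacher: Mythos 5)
Your overall architecture (induct on the number of factors, factor the polynomials over the decomposition, expand $P^\pm$ and $h^\pm$ via $\tfrac12(ab\pm cd)=\tfrac12(a+c)\tfrac12(b+d)+\tfrac12(a-c)\tfrac12(b-d)$, telescope for \descref{B} and \descref{B$'$}) is the same as the paper's (Lemma \ref{split-lem} feeding Propositions \ref{reduction-prop} and \ref{reductionprime-prop}), but the step you lean on for \descref{D} and \descref{D$'$} is a false lemma: polynomials in $q$ with nonnegative coefficients that are unimodal are \emph{not} closed under multiplication. For instance $f=1+q+3q^2$ is unimodal with nonnegative coefficients, yet $f^2=1+2q+7q^2+6q^3+9q^4$ is not unimodal. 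What actually rescues the product (and sum) arguments is that the relevant Laurent polynomials are \emph{balanced}, i.e.\ palindromic after centering: Propositions \ref{h-vv} and \ref{rest-vv} and Corollary \ref{vv-cor} show that $h_{x,y;z}$, $\wt h_{x,y;z}$, $h^\sigma_{x,y;z}$, $h^\pm_{x,y;z}$ all lie in $\ZZ[u^2]$ or $u\ZZ[u^2]$ with $u=v+v^{-1}$, so Property \descref{D} really delivers \emph{symmetric} unimodality, and products of symmetric unimodal polynomials are symmetric unimodal (Zeilberger; Lemma \ref{bal-lem}). The same parity statements are what make your two-term expansion $h^\pm_{x,y;z}=h^+_{x',y';z'}h^\pm_{x'',y'';z''}+h^-_{x',y';z'}h^\mp_{x'',y'';z''}$ usable for \descref{D$'$}: a sum of two symmetric unimodal Laurent polynomials is unimodal only when their degrees in $v$ have the same parity, and that match is exactly Proposition \ref{rest-vv}. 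You correctly flag the danger but the fix you propose does not exist; the missing ingredient is the balancedness/parity results, which require a separate inductive proof.

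The second gap is that the case where $*$ swaps two isomorphic irreducible factors is not a bookkeeping lemma; it is where most of the paper's work lies (Section \ref{specialcase-sect} together with Lemma \ref{recovery-thm2}). In that case there is no primed data on a single factor (the factor is not $*$-stable), so your $\tfrac12(ab\pm cd)$ expansion has nothing to expand into. Instead one computes (Propositions \ref{mod-prop} and \ref{recovery-thm}) that $\Psig_{(y,y^{-1}),(w,w^{-1})}=P_{y,w}(q^2)$, hence $P^\pm=\tfrac12\(P_{y,w}(q)^2\pm P_{y,w}(q^2)\)$, and that $h^\pm=\tfrac12\(f(v)^2\pm f(v^2)\)$ where $f=f_{w,x,y;z}$ is a structure constant of the \emph{triple} product $c_wc_xc_y$ (this is the reason $\wt h_{x,y;z}$ is defined through $c_xc_yc_{(x^*)^{-1}}$). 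Reducing the primed properties to the unprimed ones therefore needs (i) Lemma \ref{myown-lem}, i.e.\ that $f\mapsto\tfrac12\(f(v)^2-f(v^2)\)$ preserves nonnegativity and balanced unimodality --- the minus sign is not automatic --- and (ii) for \descref{D$'$}, that Property \descref{D} for pairwise constants forces $f_{w,x,y;z}=\sum_g h_{w,x;g}h_{g,y;z}$ to be balanced unimodal, which again uses Proposition \ref{h-vv} to guarantee all summands are centered alike. None of this is captured by saying the swapped block ``contributes ordinary Kazhdan--Lusztig data for a single copy''; it is the main content behind Proposition \ref{reductionprime-prop}.
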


%With this statement in hand, we may finally  report on our progress towards verifying our conjectures for finite Coxeter systems.
 %Lusztig and Vogan have already shown that Conjecture \descref{A$'$}  holds whenever $(W,S)$ is a Weyl group (see \cite[\S3.2 and \S7]{LV1}).
 In Section \ref{dihedral-sect} we prove that Properties \descref{A$'$} and \descref{B$'$} hold for all Coxeter systems of rank two. %(Recall that the \emph{rank} of $(W,S)$ is   the cardinality of $S$.) 
 Our further results are computational in nature. We have obtained these from extensions \cite{MyCode} we have written
 to the final version du Cloux's C++ program {\tt Coxeter} \cite{Coxeter}. Our extended version of {\tt Coxeter} allows a user to compute the polynomials $\Psig_{y,w}$, $\wt h_{x,y;z}$, $h^\sigma_{x,y;z}$, and $h^\pm_{x,y;z}$ for a given finite Coxeter system with involution.
%One can access these extensions to {\tt Coxeter}  online \cite{MyCode}.
%; this code includes detailed  comments describing our implementation, which is outlined in Section \ref{recurrence-sect}. 

  Using the extended program,
  we have been able to check directly that Properties \descref{A$'$} and \descref{B$'$} hold for each of the triples $(W,S,*)$ listed in Table \ref{maxcoeff-P-tbl} and that Properties \descref{C$'$} and \descref{D$'$} hold for the triples listed in Table \ref{maxcoeff-h-tbl}.
Of the cases considered, type $H_4$ is by far the most computationally intensive, requiring for the calculation of the polynomials $\(h^\pm_{x,y;z}\)_{x\in W;\hs y,z \in \I}$  around 10 days' computing time (on a 2.26 GHz MacBook Pro with 4 GB of main memory) and around 93 GB of memory to store all uncompressed output files.  Even in this type, verifying Properties \descref{A$'$} and \descref{B$'$} only takes a few minutes, however. 
%Tables \ref{maxcoeff-P-tbl} abd  \ref{maxcoeff-h-tbl} show the greatest nonzero coefficients of the polynomials thus computed, for the finite Coxeter systems with involution where our algorithms are effective.
%
Combining this discussion with the results of \cite{LV1} and with Theorem \ref{x-thm}, we 
 arrive at the following statement.

\begin{theorem}\label{last-thm} Let $(W,S)$ be a   Coxeter system with an $S$-preserving involution $* \in \Aut(W)$. Properties \descref{A$'$}, \descref{B$'$}, \descref{C$'$}, \descref{D$'$} then hold in at least the following cases:
\ben
\item[(a)] Property \descref{A$'$} holds whenever $(W,S)$ is finite.
\item[(b)] Property \descref{B$'$} holds if all irreducible factors of $(W,S)$ are finite with rank at most 6.
\item[(c)] Properties \descref{C$'$}, \descref{D$'$} hold if all irreducible factors of $(W,S)$ are finite with rank 1, 3, 4, or 5.
\een
\end{theorem}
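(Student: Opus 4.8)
The plan is to assemble the statement from three independent sources, matched to the three parts. For part (a), I would invoke Theorem~\ref{x-thm} with $\text{X}=\text{A}$: it suffices to check Properties~\descref{A} and~\descref{A$'$} for every irreducible finite Coxeter system. Property~\descref{A} is a theorem of Elias and Williamson~\cite{EWpaper}, so the work is to verify Property~\descref{A$'$} case by case over the list in Table~\ref{types-tbl}. For the infinite families $A_n, B_n, D_n$ this is exactly the content of Lusztig and Vogan's results in \cite[\S3.2 and \S7]{LV1} (every Weyl group, with $*$ either trivial or the relevant diagram automorphism, is covered), while for the exceptional and non-crystallographic types $H_3, H_4, F_4, E_6, E_7, E_8, I_2(m)$ one falls back on direct computation using the recurrence for $\Psig_{y,w}$ recorded in Corollary~\ref{main-recurrence}, implemented in the extended {\tt Coxeter} code~\cite{MyCode}; the dihedral case $I_2(m)$ is moreover handled uniformly in Section~\ref{dihedral-sect}. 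Part~(a) then follows by the reduction from irreducible to general systems in Theorem~\ref{x-thm}.

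For part~(b), the same Theorem~\ref{x-thm} with $\text{X}=\text{B}$ reduces everything to Properties~\descref{B} and~\descref{B$'$} for irreducible finite factors of rank at most $6$. Property~\descref{B} holds for all finite Coxeter systems by Irving~\cite{Ir} and du Cloux~\cite{Fokko}, so again the burden is Property~\descref{B$'$}. Rank~$2$ is Section~\ref{dihedral-sect}; the remaining irreducible finite types of rank between $3$ and $6$ (namely $A_3,\dots,A_6$, $B_3,\dots,B_6$, $D_4,D_5,D_6$, $H_3,H_4,F_4,E_6$, each with its admissible $S$-preserving involutions) are verified by the algorithm of Section~\ref{recurrence-sect}, which computes the full table of $\Psig_{y,w}$ and then checks the required monotonicity of $P^\pm_{y,w}$ directly. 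This is exactly the range of cases enumerated in Table~\ref{maxcoeff-P-tbl}, and the observation in Section~\ref{summary-sect} that verifying Properties~\descref{A$'$} and~\descref{B$'$} is computationally cheap (a few minutes even in type $H_4$) is what makes this feasible.

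For part~(c), I would apply Theorem~\ref{x-thm} with $\text{X}=\text{C}$ and with $\text{X}=\text{D}$ simultaneously, reducing to Properties~\descref{C},~\descref{D},~\descref{C$'$},~\descref{D$'$} for the irreducible finite factors of rank $1,3,4,5$. Property~\descref{C} is again Elias–Williamson~\cite{EWpaper}, and Property~\descref{D} is known in these ranks by the computations of du Cloux~\cite{Fokko} together with the range check quoted in Section~\ref{conj-sect} (rank at most five). The new input is the verification of Properties~\descref{C$'$} and~\descref{D$'$} for the triples in Table~\ref{maxcoeff-h-tbl}: one computes $\wt h_{x,y;z}$ (from the structure constants of $\cH_q$ in the Kazhdan–Lusztig basis) and $h^\sigma_{x,y;z}$ (from the action of $C_x$ on $\A_y$ in $\cM_{q^2}$), forms $h^\pm_{x,y;z}$ via \eqref{hpm-def}, and checks nonnegativity of coefficients and unimodality of the normalized polynomials. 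Rank~$1$ is trivial; ranks $3,4,5$ cover $A_3,A_4,A_5$, $B_3,B_4,B_5$, $D_4,D_5$, $H_3,H_4$, each with all $S$-preserving involutions.

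The main obstacle is the type $H_4$ computation in part~(c): as noted in Section~\ref{summary-sect}, computing the full family $\bigl(h^\pm_{x,y;z}\bigr)$ there takes roughly ten days of computation and about $93$ GB of output, so the difficulty is not conceptual but one of scale — organizing the calculation (exploiting symmetries, streaming output, and using the recurrence of Corollary~\ref{main-recurrence} rather than naive multiplication) so that it terminates in practice and its correctness can be trusted. The other ranks pose no comparable difficulty, and once the tabulated computations are in hand, parts~(a), (b), (c) are immediate combinations of those computations with the cited theorems on $P_{y,w}$ and with the reduction Theorem~\ref{x-thm}.
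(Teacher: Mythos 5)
Your proposal is correct and takes essentially the same route as the paper: Theorem \ref{x-thm} reduces everything to the irreducible finite factors, where one quotes Elias--Williamson for Property A and C, Irving/du Cloux for B, the rank-at-most-five check for D, Lusztig--Vogan \cite{LV1} for Property A$'$ on Weyl groups, Section \ref{dihedral-sect} for the rank-two case, and the machine verifications recorded in Tables \ref{maxcoeff-P-tbl} and \ref{maxcoeff-h-tbl} for the remaining types. Two small slips worth fixing: in part (a) the types $E_7$ and $E_8$ do not appear in Table \ref{maxcoeff-P-tbl} and are not verified by computation, so Property A$'$ for them must come from \cite{LV1} (as your own parenthetical already grants, since they are Weyl groups, so do not assign them to ``direct computation''); and in part (c) your enumeration of the rank $3$, $4$, $5$ types omits $F_4$, which is nevertheless covered, with both involutions, in Table \ref{maxcoeff-h-tbl}.
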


%\begin{remark}
Our calculations actually show a little  more than this  statement indicates. Specifically, Property \descref{B$'$} also holds if the irreducible factors of $(W,S)$ include 
%, beyond the cases listed in the theorem,  
  Coxeter systems of  types $A_7$ or $A_8$.  Properties \descref{C$'$} and \descref{D$'$}  hold if the irreducible factors of $(W,S)$ include  Coxeter systems of types $A_6$ or $I_2(m)$ for $m\leq 100$. 
  It is of course expected that Properties \descref{C$'$} and \descref{D$'$}  hold for all Coxeter systems of rank two, and this can probably be shown by  some technical but elementary calculations in the dihedral case. We do not attempt to carry these out in the present work, however. %We do not have the space to undertake this labor in the present work, however.
%\end{remark}

\subsection*{Acknowledgements}

I am grateful  to Ben Elias, George Lusztig, and David Vogan for many helpful discussions and suggestions.

\section{Computations for arbitrary Coxeter systems}\label{recover-sect}

Here we review some  general information about the distinguished bases of $\cH_q$ and $\cH_{q^2}$ and $\cM_{q^2}$.

\def\msig{m^\sigma}

\subsection{Lusztig's recurrence for the twisted polynomials}\label{recurrence-sect}

Let $(W,S)$ denote  a Coxeter system and  $* \in \Aut(W)$   any $S$-preserving involution of $W$. 
To describe how the standard basis $\(T_w\)_{w \in W}$ of $\cH_{q^2}$ acts on the distinguished basis $\(A_w\)_{w \in \I}$ of $\cM_{q^2}$, we introduce the following notation.

\begin{notation}
Recall that $q=v^2$ and $P^\sigma_{y,w} \in \ZZ[q]$.
Given $y, w \in \I$, let 
 $\mu^\sigma(y,w) 
 $ and $
 \nu^\sigma(y,w)$ respectively denote the coefficients 
of $v^{\ell(w)-\ell(y)-1}$ and $v^{\ell(w)-\ell(y)-2}$ in $P^\sigma_{y,w}$.
%
%\[
%\ba
% \mu^\sigma(y,w) &\omdef= \text{the coefficient of  $v^{\ell(w)-\ell(y)-1}$ in $P^\sigma_{y,w}$}
% \\
%  \nu^\sigma(y,w) &\omdef= \text{the coefficient of $v^{\ell(w)-\ell(y)-2}$ in $P^\sigma_{y,w}$.}
%  \ea
%\]
In turn, for each
 $s \in S$  define  $\mu^\sigma(y,w;s)$ as the integer given by
\[ \mu^\sigma(y,w;s) \omdef= 
\nu^\sigma(y,w)  + \delta_{sy,ys^*} \mu^\sigma(sy,w) -  \delta_{sw,ws^*} \mu^\sigma(y,sw) 
 - \sum_{{x \in \I;\hs sx<x }} \mu^\sigma(y,x) \mu^\sigma(x,w).
 \]
%Note that the sum on the right hand side is only over  $x \in \I^-(w,s)$ with $y<x<w$, since we have $\mu^\sigma(y,x) =0$ whenever $ y \not < x$.
As usual,  the Kronecker delta here means $\delta_{a,b} =1$ if $a=b$ and $\delta_{a,b} = 0$ otherwise. 
Note %, since $\Psig_{y,w}$ is a polynomial in $q=v^2$,
 that $\mu^\sigma(y,w)$ (respectively, $\nu^\sigma(y,w)$) is nonzero only if $y\leq w$ and $\ell(w) - \ell(y)$ is odd (respectively, even).
Define $\msig(y \xrightarrow{s}w) \in \cA$ for $y,w \in \I$ and $s \in S$ as the Laurent polynomial 
\be\label{msig-def}
\msig(y \xrightarrow{s}w) %= \mu^\sigma(y,w)(v+v^{-1}) + \mu^\sigma(y,w;s). 
= \begin{cases} \mu^\sigma(y,w)(v+v^{-1}) &\text{if $\ell(w) - \ell(y)$ is odd} \\ \mu^\sigma(y,w;s)&\text{if $\ell(w) - \ell(y)$ is even.}\end{cases}
\ee
Finally, let $\Des(w)  = \{ s \in S : \ell(sw) < \ell(w)\}$
and
$\mathrm{Des}_R(w)  = \{ s \in S : \ell(ws) < \ell(w)\}$.
\end{notation}%\be\label{des-def}\Des(w) \omdef = \{ s \in S : \ell(sw) < \ell(w)\}\qquand
%\mathrm{Des}_R(w) \omdef = \{ s \in S : \ell(ws) < \ell(w)\}\ee
Lusztig proves the following as \cite[Theorem 6.3]{LV2}.

\begin{theorem}[Lusztig \cite{LV2}] \label{mult-thm}
Let $w \in \I$ and $s \in S$. 
Then $C_s = q^{-1}(T_s + 1)$ and 
\[C_s A_w = \begin{cases} 
\(q+q^{-1}\) A_w&\text{if $s \in \Des(w)$} \\[-10pt]\\
 \(v+v^{-1}\) A_{s w} +\sum_{{y \in \I ;\hs  sy<y<s w}}  \msig(y \xrightarrow{s}w) A_y&\text{if $s \notin\Des(w)$ and $sw=ws^*$} \\[-10pt]\\
 A_{s w s^*} +\sum_{{y \in \I; \hs sy<y<s ws^*}}  \msig(y \xrightarrow{s}w) A_y&\text{if $s \notin \Des(w)$ and $sw\neq ws^*$}.\end{cases}\]
\end{theorem}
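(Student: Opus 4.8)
The plan is to adapt Kazhdan and Lusztig's proof of Theorem \ref{kl-thm}, transported from the algebra $\cH_q$ to the module $\M$ and carried out with the parameter $q^2$ in place of $q$. First, the identity $C_s = q^{-1}(T_s+1)$ is immediate: since $P_{e,s}=P_{s,s}=1$ and $P_{y,s}=0$ for $y\notin\{e,s\}$, the defining formula $C_s = q^{-\ell(s)}\sum_y P_{y,s}(q^2)T_y$ collapses to $q^{-1}(T_e+T_s)$; squaring this and using $T_s^2 = q^2T_e+(q^2-1)T_s$ gives $C_s^2=(q+q^{-1})C_s$ in $\cH_{q^2}$. The rest is an induction on $\ell(w)$ in which, at each length, one first settles the two cases with $s\notin\Des(w)$ and then deduces the case $s\in\Des(w)$.

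For the main case $s\notin\Des(w)$, set $z=s\act w$, so that $\ell(z)>\ell(w)$ and $z$ is the unique maximal element that can appear. Expand $C_sA_w = q^{-1}(T_s+1)A_w$ into the standard basis $\{a_y:y\in\I\}$ by applying the four-case $T_s$-action of Theorem-Definition \ref{lv-thmdef1} to $A_w = v^{-\ell(w)}\sum_{y\le w}\Psig_{y,w}\,a_y$ and invoking the degree bound of Theorem-Definition \ref{lv-thmdef3}(c). A careful computation then shows that the $a_z$-coefficient of $C_sA_w$ has leading term $v^{-\ell(z)}$ when $sw\ne ws^*$ and leading term $v^{-\ell(z)}(v+v^{-1})$ when $sw=ws^*$, that the $a_y$-coefficient vanishes for $y\not\le z$, and that for each $y<z$ the $a_y$-coefficient has $v$-degree at most $\tfrac12(\ell(z)-\ell(y))$ --- that is, at most one power above the Kazhdan--Lusztig bound needed to certify membership in the distinguished basis. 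At the same time $C_sA_w$ is bar-invariant, because $\overline{C_s}=C_s$ (it lies in the Kazhdan--Lusztig basis of $\cH_{q^2}$), $\overline{A_w}=A_w$, and the bar operator of $\M$ is semilinear over that of $\cH_{q^2}$ by Theorem-Definition \ref{lv-thmdef2}. Now subtract from $C_sA_w$ an $\cA$-combination $\sum_{y<z}m_yA_y$ of canonical basis elements, one term for each $y$ whose $a_y$-coefficient attains the borderline degree, chosen to cancel those borderline terms; the resulting element is bar-invariant, has all standard-basis coefficients of strictly sub-critical $v$-degree, and has no $a_y$-component for $y\not<z$, hence vanishes by the uniqueness argument underlying Theorem-Definition \ref{lv-thmdef3}. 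This yields $C_sA_w = A_z + \sum_{y<z}m_yA_y$ or $C_sA_w=(v+v^{-1})A_z+\sum_{y<z}m_yA_y$, according to the sub-case, and it remains to identify $m_y$. Reading the borderline-degree coefficient out of the expansion above shows $m_y=\mu^\sigma(y,w)(v+v^{-1})$ when $\ell(w)-\ell(y)$ is odd and $m_y=\mu^\sigma(y,w;s)$ when $\ell(w)-\ell(y)$ is even, so that $m_y=\msig(y\xrightarrow{s}w)$; in the even case the three correction terms in the definition of $\mu^\sigma(y,w;s)$ come respectively from the ambiguity $a_{sy}\leftrightarrow a_{sys^*}$ in the $T_s$-action when $sy=ys^*$, from the analogous ambiguity on the $w$-side when $sw=ws^*$, and from the contribution of the lower terms $A_x$ (with $s\in\Des(x)$) that were already subtracted off --- verifying these identities is tantamount to establishing the recurrence for the polynomials $\Psig_{y,w}$.

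For the remaining case $s\in\Des(w)$, set $w'=s\act w$; then $\ell(w')<\ell(w)$, $s\notin\Des(w')$, and $s\act w'=w$. The case just proved, applied to $w'$, gives $C_sA_{w'}=\lambda A_w+\sum_y m_yA_y$ with $\lambda\in\{1,v+v^{-1}\}$ and with every $y$ in the sum satisfying $s\in\Des(y)$ and $\ell(y)<\ell(w)$. Applying $C_s$ on the left, using $C_s^2=(q+q^{-1})C_s$ on the left-hand side and the inductive hypothesis $C_sA_y=(q+q^{-1})A_y$ on the right, and cancelling the nonzero scalar $\lambda$ over the domain $\cA$, gives $C_sA_w=(q+q^{-1})A_w$, completing the induction.

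I expect the main obstacle to be the degree bookkeeping in the middle paragraph: one must simultaneously track the leading and next-to-leading $v$-coefficients through all four branches of the $T_s$-action and through both sub-cases $sw=ws^*$ and $sw\ne ws^*$, and check that the borderline-degree coefficients are captured exactly by the somewhat intricate integer $\mu^\sigma(y,w;s)$. This is precisely the point where the twisted setting departs from the classical one, since the $T_s$-action can carry a standard basis vector $a_y$ to either $a_{sy}$ or $a_{sys^*}$, and both possibilities must be followed at once throughout the computation.
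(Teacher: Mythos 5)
The paper does not actually prove this statement: it is quoted from Lusztig \cite[Theorem 6.3]{LV2} (``Lusztig proves the following as\dots''), so there is no internal proof to compare against. Your outline is essentially Lusztig's own argument and is structurally sound: $C_s=q^{-1}(T_s+1)$ and $C_s^2=(q+q^{-1})C_s$ are immediate, the ascent cases follow from bar-invariance of $C_sA_w$ (via Theorem-Definition \ref{lv-thmdef2}) together with the triangularity and degree bounds of Theorem-Definition \ref{lv-thmdef3}, and the descent case does follow by applying $C_s$ to the ascent formula for $w'=s\act w$ and cancelling the nonzero scalar $\lambda$ in the free $\cA$-module $\M$; moreover you correctly attribute the three correction terms in $\mu^\sigma(y,w;s)$ to the $sy=ys^*$ ambiguity, the $sw=ws^*$ ambiguity, and the previously subtracted lower terms $A_x$. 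The one caveat is that the ``careful computation'' you defer is the real content: the subtraction must be organized as a downward induction on $\ell(y)$ (since removing $m_yA_y$ can create new borderline coefficients at shorter $y'$), and one must check at each stage that the borderline part forces $m_y$ to be a bar-invariant element of $\cA$ of the stated form $\mu^\sigma(y,w)(v+v^{-1})$ or $\mu^\sigma(y,w;s)$ — this bookkeeping, which you flag but do not carry out, is the bulk of Lusztig's proof rather than a routine verification.
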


Comparing  coefficients of $a_y$ on both sides of the preceding equation yields our next result, which also appears as \cite[Corollary 2.7]{EM1}.
% Rewriting the right hand side in the standard basis $\(a_w\)_{ w\in \I}$ is straightforward from the definitions in Section \ref{twisted-intro}, while rewriting the left hand side can be done using the identities $C_s = q^{-1}(T_s +1)$ and $A_w = v^{-\ell(w)} \sum_{y \in \I} \Psig_{y,w} a_y$ with the multiplication rule \eqref{module-def}.
%In this manner one obtains the following corollary:

\begin{corollary}\label{main-recurrence}
Let $y,w \in \I$ with $y \leq w$  and  $s \in \Des(w)$.  

\begin{enumerate}

\item[(a)] $\Psig_{y,w} = \Psig_{s\act y,w}$.

\item[(b)] If $ s \in \Des(y)$ and  we let $w'=s\act w$ and  $c = \delta_{sw,ws^*}$ and $d = \delta_{sy,ys^*}$, then
{\small
\be\label{partb}  (q+1)^c \Psig_{y,w} = 
(q+1)^d P_{s\act y,w'}^\sigma  +  q(q-d)P_{ y,w'}^\sigma 
- 
\sum_{\substack{z \in \I;\hs sz<z \\ y \leq z < w}} v^{\ell( w)-\ell(z)+c}\cdot \msig(z \xrightarrow{s} w') \cdot P_{ y,z}^\sigma.
\ee
}
\end{enumerate}
\end{corollary}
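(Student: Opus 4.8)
The plan is to derive both parts directly from Theorem~\ref{mult-thm} by expanding $C_s A_w$ in two ways and comparing coefficients of the standard basis elements $a_y$. First recall that, by Theorem-Definition~\ref{lv-thmdef3}, $A_w = v^{-\ell(w)}\sum_{y\in\I} \Psig_{y,w}\, a_y$, and similarly $C_s = q^{-1}(T_s+1) = v^{-1}(T_s + 1)$ (using $q = v^2$, exactly as with $c_s$ in Theorem~\ref{kl-thm}). Since $s\in\Des(w)$, Theorem~\ref{mult-thm} gives $C_s A_w = (q+q^{-1})A_w$. On the other hand, writing $w' = s\act w$ and noting $s\notin\Des(w')$, I would compute $C_s A_{w'}$ using the second or third case of Theorem~\ref{mult-thm} (depending on whether $c = \delta_{sw,ws^*}$ is $1$ or $0$), so that $A_w$ appears on the right side of that expansion; rearranging expresses $C_s A_{w'}$ in terms of $A_w$ and lower $A_z$'s with $sz<z$.

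For part (a): the identity $C_s A_w = (q+q^{-1})A_w$ means that $(T_s+1)A_w = (q+1)A_w$, i.e.\ $T_s A_w = q A_w$. Now expand $T_s A_w$ via Theorem-Definition~\ref{lv-thmdef1}, grouping the terms $a_y$ and $a_{s\act y}$ in pairs. The coefficient extraction, after dividing by the appropriate power of $v$, should yield $\Psig_{y,w} = \Psig_{s\act y, w}$: the point is that because $s\in\Des(w)$ the action of $T_s$ fixes $A_w$ up to the scalar $q$, which forces the $\Psig$-coefficients to be constant on $\langle s\rangle$-orbits of $\I$ acting via $s\act(-)$. I would phrase this as a short computation matching the coefficient of $a_y$ on both sides of $T_s A_w = qA_w$, using the four cases of the $T_s$-action and the fact that $s\act(s\act y) = y$.

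For part (b): here I take the expansion $C_s A_{w'} = A_w + \sum_{z\in\I,\ sz<z<w}\msig(z\xrightarrow{s}w')A_z$ (in the case $c=0$; when $c=1$ the leading term is $(v+v^{-1})A_w$ and there is a shift, accounting for the factors $(q+1)^c$ and $v^{\ell(w)-\ell(z)+c}$). I substitute $A_{w'} = v^{-\ell(w')}\sum \Psig_{t,w'}a_t$, $A_w = v^{-\ell(w)}\sum\Psig_{y,w}a_y$, and $A_z = v^{-\ell(z)}\sum\Psig_{y,z}a_y$, apply the rule $C_s a_t = v^{-1}(T_s+1)a_t$ from Theorem-Definition~\ref{lv-thmdef1}, and extract the coefficient of a fixed $a_y$ with $y\leq w$ and $s\in\Des(y)$. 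The left side produces, from the four cases of the $T_s$-action and after collecting the $a_t$ for $t\in\{y,s\act y\}$, a combination of $\Psig_{s\act y,w'}$ and $\Psig_{y,w'}$ with coefficients depending on $\ell(w')-\ell(y)$, the flags $c$ and $d=\delta_{sy,ys^*}$, and powers of $v$; the right side produces $\Psig_{y,w}$ (times $(q+1)^c$ after clearing denominators) minus the sum over $z$. Careful bookkeeping of the length differences $\ell(w) = \ell(w')+1$ (resp.\ $\pm 1$ in the $c=1$ case), together with the normalizing powers $v^{-\ell(\cdot)}$, is what yields the stated exponent $\ell(w)-\ell(z)+c$ and the precise coefficients $(q+1)^d$ and $q(q-d)$.

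The main obstacle is the second step: keeping the normalization powers of $v$ straight through the four-case $T_s$-action while simultaneously tracking the two flags $c$ and $d$, which govern whether $sw=ws^*$ and whether $sy=ys^*$. In particular one must check that the terms $a_{s\act y}$ contributed by $\Psig_{y,w'}$ and by $\Psig_{s\act y, w'}$ recombine correctly — this is where the factor $q(q-d)$ versus $(q+1)^d$ asymmetry arises — and that in the $\mu^\sigma$-sum only the $z$ with $sz<z$ survive (the others cancel in pairs against their $s\act z$ partners, exactly as in the classical Kazhdan–Lusztig recurrence). I would organize the computation by first doing the generic case $c=d=0$, where the formula reduces to the familiar shape, and then record the modifications needed when $c$ or $d$ equals $1$; Lusztig's Theorem~\ref{mult-thm} already isolates these cases, so this is bookkeeping rather than a new idea.
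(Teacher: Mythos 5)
Your strategy is exactly the paper's: the corollary is obtained by comparing coefficients of $a_y$ on both sides of the identity in Theorem \ref{mult-thm}, applied to $A_w$ for part (a) and to $A_{w'}$ with $w'=s\act w$ for part (b). So there is no difference of route; however, two of the intermediate identities you state are false and would derail the computation if carried out literally. First, since the module is over the Hecke algebra with parameter $q^2$, we have $C_s = q^{-1}(T_s+1) = v^{-2}(T_s+1)$, not $v^{-1}(T_s+1)$; the analogy with $c_s = v^{-1}(t_s+1)$ in $\cH_q$ does not transfer verbatim. Second, for $s \in \Des(w)$ the relation $C_sA_w = (q+q^{-1})A_w$ gives $(T_s+1)A_w = (q^2+1)A_w$, i.e. $T_sA_w = q^2A_w$ (the quadratic relation in $\cH_{q^2}$ is $(T_s-q^2)(T_s+1)=0$, so the eigenvalues are $q^2$ and $-1$, never $q$). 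With your eigenvalue $q$ the coefficient comparison in part (a) produces inconsistent relations such as $q^2\Psig_{s\act y,w} = (q-q^2+1)\Psig_{y,w}$; with $q^2$ it gives $\Psig_{s\act y,w}=\Psig_{y,w}$ in all four cases of the $T_s$-action, as desired.

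For part (b) the same corrections fix the bookkeeping, which you left garbled: one has $\ell(w') = \ell(w)-2+c$ (length drops by $1$ when $c=1$, by $2$ when $c=0$), and after extracting the coefficient of $a_y$ (with $s\in\Des(y)$) from $C_sA_{w'} = (v+v^{-1})^{\hs c}A_w + \sum_{z:\, sz<z<w}\msig(z\xrightarrow{s}w')A_z$ one multiplies through by $qv^{\ell(w')} = v^{\ell(w)+c}$; the factor $(v+v^{-1})^c v^{c} = (q+1)^c$ is where the left-hand coefficient comes from, and $v^{\ell(w)+c-\ell(z)}$ is the stated exponent. On the left, the contributions of $a_t$ for $t=s\act y$ and $t=y$ under $(T_s+1)$ are $(q+1)^d\Psig_{s\act y,w'}$ and $q(q-d)\Psig_{y,w'}$ respectively, which is precisely the asymmetry in \eqref{partb}. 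Finally, no cancellation over pairs $\{z, s\act z\}$ is needed: the sum in Theorem \ref{mult-thm} is already restricted to $z$ with $sz<z$, and the extra condition $y\leq z$ in \eqref{partb} simply records that $\Psig_{y,z}=0$ unless $y\leq z$.
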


%\begin{remark} Because of the $(q+1)^c$ factor on the left, it is not obvious from the recurrence in part (b) that $\Psig_{y,w} \in \cA$ is actually a polynomial in $q$ (but it is clear that $\Psig_{y,w}$ is a rational function of $q$). That $\Psig_{y,w} \in \ZZ[q]$ follows from results in \cite{LV2}, however, and in light of this, the recurrence shows that $(q+1)^{\ell^*(w)-\ell^*(y)}$ divides $\Psig_{y,w}$. Recall here the definition of  $\ell^*$ from Proposition-Definition \ref{ell-star}.
%\end{remark}

 The preceding theorem and corollary give recursive formulas which can be used to compute the polynomials $\Psig_{y,w}$ and $h^\sigma_{x,y;z}$, though there is some subtlety in how to go about this. 
The following algorithms carrying out such calculations are implemented in our extensions \cite{MyCode} to du Cloux's program {\tt Coxeter} \cite{Coxeter}. 

  \begin{algo-psig}% for computing the polynomials $\(\Psig_{y,w}\)_{y,w \in \I}$
If $y\not < w$ then $\Psig_{y,w} = \delta_{y,w}$. If $\ell(w) > 1$ then it is usually possible to compute $\Psig_{y,w}$ inductively 
by applying Corollary \ref{main-recurrence} in a straightforward way, since this result usually gives a formula for $\Psig_{y,w}$ in terms of polynomials $\Psig_{y',w'}$ with either $y' \leq w' < w$ or $y<y'<w'=w$, which
one may assume to be already known.
 However, 
 terms on the right hand side of the recurrence \eqref{partb}  can sometimes depend on terms on the left. There is actually only one such term: the summand  indexed by $z=y$ when $sw=ws^*$ and  $\ell(w)- \ell(y)$ is odd. In this case,  Corollary \ref{main-recurrence}(b) assumes the form
\be\label{thiseq}  (q+1) \Psig_{y,w} = f + v^{\ell(w)-\ell(y)+1} \mu^\sigma(y,w)\ee
where  $f \in \ZZ[q]$ is the polynomial
\[f =  (q+1)^d P_{s\act y,w'}^\sigma  +  q(q-d)P_{ y,w'}^\sigma 
- 
\sum_{\substack{z \in \I;\hs sz<z \\ y < z < w}} v^{\ell( w)-\ell(z)+c}\cdot \msig(z \xrightarrow{s} w') \cdot P_{ y,z}^\sigma\]
where our notation is as in Corollary \ref{main-recurrence}.
The definition of $f$ depends on quantities which one can assume to be already known,
and once $f$ is computed, it is straightforward to extract $\Psig_{y,w}$ from the equation \eqref{thiseq}. 
\end{algo-psig}

 \begin{algo-hsigma} By definition $h^\sigma_{1,y;z} = \delta_{y,z}$, and if $s \in S$ then $h^\sigma_{s,y;z}$ is determined by Theorem \ref{mult-thm}. When $x \in W$ has length greater than one so that there exists $s \in S$ with $sx<s$,  Theorem \ref{kl-thm} affords the recurrence
   \be\label{hsig-reccc}
  h^\sigma_{x,y;z}:= \sum_{z'\in \I} h^\sigma_{sx,y;z'}h^\sigma_{s,z';z}  - \sum_{\substack{x' \in W \\ sx'<x'<x}} \mu(x',sx) h^\sigma_{x',y;z}
  \ee
which expresses $h^\sigma_{x,y;z}$ in terms of quantities which may be assumed to have been already computed. Recall here that $\mu(x',sx)$ denotes the coefficient of $v^{\ell(x)-\ell(x')-2}$ in $P_{x',sx}$. 
%, which can be computed using {\tt Coxeter} \cite{Coxeter}.
   \end{algo-hsigma}

Du Cloux's papers \cite{Fokko0,Fokko} describe efficient algorithms (implemented in
  {\tt Coxeter} \cite{Coxeter}) for computing the Kazhdan-Lusztig polynomials $P_{y,w}$ and the structure constants $h_{x,y;z}$. 
Once the arrays $\(P_{y,w}\)_{y,w \in \I}$ and $\(\Psig_{y,w}\)_{y,w \in \I}$ have  been computed for a finite Coxeter system, it is straightforward to check Properties \descref{A$'$} and \descref{B$'$}.
To similarly check Properties \descref{C$'$} and \descref{D$'$}, one must first compute the arrays $\(h_{x,y;z}\)_{x,y,z\in W}$ and $\(h^\sigma_{x,y;z}\)_{x \in W,\hs y,z \in \I}$ and  then calculate the Laurent polynomials $\wt h_{x,y;z}$ via the identity
\be\label{tohtilde} \wt h_{x,y;z} = \sum_{z' \in W} h_{x,y;z'} h_{z',(x^*)^{-1},z}\qquad\text{for $x \in W$ and $y,z \in \I$.}\ee
Implementing this  formula  presents its own challenges in cases when the array $\(h_{x,y;z}\)_{x,y,z\in W}$ is very large; however, given $\wt h_{x,y;z}$ and $h^\sigma_{x,y;z}$ it is again straightforward to check our remaining positivity properties for a particular finite Coxeter system with involution. 
All of these checks are implemented in \cite{MyCode}.

% Before proceeding,
%  we mention two  additional properties of the polynomials $\Psig_{y,w}$, which follow from Corollary \ref{main-recurrence} in a straightforward manner by induction on $\ell(w)$.
%Lusztig states part (b)  explicitly as \cite[Proposition 4.10]{LV2}, but this actually serves in \cite{LV2} as a preliminary to the other results given here.
%
%
%\begin{corollary}[Lusztig \cite{LV2}]\label{lv-cor}
%Let $y,w \in \I$. % with $y\leq w$.
%\begin{enumerate}
%\item[(a)] $\Psig_{y,w} = \Psig_{y^{-1},w^{-1}}= \Psig_{y^*,w^*} = \Psig_{\tau(y),\tau(w)}$ for all $S$-preserving automorphisms $\tau \in \Aut(W)$ which commute with $*$ in the sense that $\tau(x^*) = \tau(x)^*$ for all $x \in W$.
%\item[(b)] $\Psig_{y,w}$ has constant coefficient 1 if $y\leq w$.
%\end{enumerate}
%\end{corollary}

\subsection{A special case of the twisted involution module}\label{specialcase-sect}

If $(W',S')$ and $(W'',S'')$ are Coxeter systems, then we define the direct product \[(W,S) = (W',S') \times (W'',S'')\] to be the Coxeter system with 
$ W = W'\times W'' $ and $ S = \{ (s,1) : s \in S'\} \cup \{ (1,s) : s \in S''\}.$
The following proposition shows that the Kazhdan-Lusztig polynomials $P_{y,w}$ can occur as instances of the twisted polynomials $\Psig_{y,w}$ for certain choices of $(W,S)$ and $*$. %This makes it possible to recover several important properties of the Kazhdan-Lusztig basis $\( c_w \)_{w \in W}\subset \cH_q$ and the Kazhdan-Lusztig polynomials $\(P_{y,w}\)_{w \in W}\subset \ZZ[q]$ from analogous statements for $\cM_{q^2}$.

\begin{proposition}\label{mod-prop} Suppose that  $(W,S) = (W',S') \times (W',S')$ 
 for some Coxeter system $(W',S')$,
and that  $* \in \Aut(W)$ acts by 
$(x,y)^* = (y,x)$ for $y,w \in W'$.
Let  $\cH'_{q^2}$ denote the  Hecke algebra of $(W',S')$ with parameter $q^2$, and define  \[\iota : \cH'_{q^2} \to \cH_{q^2} \qquand \iota^\sigma : \cH'_{q^2} \to \cM_{q^2}\]
as the unique $\cA$-linear maps with
$\iota(T_w )= T_{(w,1)}$ and $ \iota^\sigma(T_w ) =  a_{(w,w^{-1})}$ for $w \in W'$.

\ben

%\item[(a)] The map $w \mapsto (w,w^{-1})$ defines a poset isomorphism $(W',\leq) \xrightarrow{\sim} (\I,\leq)$.
\item[(a)] 
The map $\iota$ is injective and the map $\iota^\sigma$ is bijective, and for all $T,T ' \in \cH_{q^2}'$ we have
\[\iota(TT') = \iota(T)\iota(T') \qquand  \iota^\sigma(TT') = \iota(T) \iota^\sigma(T')\qquand \iota^\sigma(\hs \overline{T}\hs) =  \overline {\iota^\sigma(T)}.\]

\item[(b)] For all $y,w\in W'$, we have  $ \Psig_{(y,y^{-1}),(w,w^{-1})} = P_{y,w}(q^2).$

 \een

\end{proposition}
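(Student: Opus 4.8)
The plan is to establish part (a) first and then deduce part (b) as a formal consequence. For part (a), the key observation is that the direct product structure $(W,S) = (W',S')\times(W',S')$ makes $\cH_{q^2}$ a tensor product $\cH'_{q^2}\otimes_\cA\cH'_{q^2}$ with $T_{(w,1)} = T_w\otimes 1$ and $T_{(1,w)} = 1\otimes T_w$, so the map $\iota$ is just $T\mapsto T\otimes 1$, which is visibly an injective $\cA$-algebra homomorphism. The crux is to identify the module $\cM_{q^2}$ for this particular $*$ with a ``regular'' copy of $\cH'_{q^2}$. First I would check that the twisted involutions $\I$ are exactly the elements $(w,w^{-1})$ for $w\in W'$: indeed $(x,y)^* = (y,x)$ equals $(x,y)^{-1} = (x^{-1},y^{-1})$ iff $y = x^{-1}$. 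This gives a bijection $W'\to\I$, $w\mapsto(w,w^{-1})$, hence $\iota^\sigma$ is an $\cA$-module isomorphism. Next I would verify the intertwining formula $\iota^\sigma(TT') = \iota(T)\,\iota^\sigma(T')$; by $\cA$-linearity and the fact that the $T_s$ generate $\cH'_{q^2}$, it suffices to check $T_s\cdot a_{(w,w^{-1})} = \iota^\sigma(T_sT_w)$ for $s\in S'$, i.e.\ $\iota^\sigma(T_{sw})$ or $\iota^\sigma(q^2T_{sw}+(q^2-1)T_w)$ according to whether $sw>w$ or $sw<w$. Here one applies the multiplication rule of Theorem-Definition~\ref{lv-thmdef1} with the simple generator $(s,1)\in S$ acting on $w = (w,w^{-1})\in\I$: since $s\neq s^*$ in the product (the generator $(s,1)$ has $(s,1)^* = (1,s)\neq(s,1)$), one is always in the case $(s,1)\act(w,w^{-1}) = (s,1)(w,w^{-1})(1,s) = (sw,w^{-1}s) = (sw,(sw)^{-1})$, so $T_{(s,1)}$ acts by the first or fourth branch of the displayed formula depending on the sign of $\ell$, and one reads off exactly $T_{(sw,(sw)^{-1})}$ or $q^2T_{(sw,(sw)^{-1})}+(q^2-1)T_{(w,w^{-1})}$ — matching $\iota^\sigma(T_sT_w)$ on the nose.

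For the bar-operator compatibility $\iota^\sigma(\overline T) = \overline{\iota^\sigma(T)}$, I would argue by uniqueness: the composite $T\mapsto \overline{\iota^\sigma((\iota^\sigma)^{-1}(\,\cdot\,))}$ — more precisely, the map $m\mapsto \iota^\sigma(\overline{(\iota^\sigma)^{-1}(m)})$ on $\cM_{q^2}$ — is a $\ZZ$-linear involution of $\cM_{q^2}$ fixing $a_{(1,1)} = \iota^\sigma(T_1)$ and satisfying $\overline{h\cdot m} = \overline h\cdot\overline m$ (using the already-established intertwining property together with $\iota(\overline T) = \overline{\iota(T)}$, which holds since $\iota$ is an algebra map sending $T_w$ to $T_{(w,1)}$ and commutes with the bar operator on standard basis elements). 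By the uniqueness in Theorem-Definition~\ref{lv-thmdef2}, this map must be the bar operator on $\cM_{q^2}$, giving the identity. Alternatively, one can invoke Lusztig's closed formula $\overline{a_w} = (-1)^{\ell(w)}(T_{w^{-1}})^{-1}a_{w^{-1}}$ and compare with $\overline{T_w} = (T_{w^{-1}})^{-1}$, noting $\ell((w,w^{-1})) = 2\ell(w)$ is even and $(w,w^{-1})^{-1} = (w^{-1},w)$; I expect the uniqueness argument to be cleaner.

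Given part (a), part (b) is immediate. Apply $\iota^\sigma$ to the definition $c_w = v^{-\ell(w)}\sum_y P_{y,w}\,t_y$... but more directly, work inside $\cM_{q^2}$: the element $C_w' := q^{-\ell(w)}\sum_{y}P_{y,w}(q^2)T_y\in\cH'_{q^2}$ is bar-invariant, so $\iota^\sigma(C_w')\in\cM_{q^2}$ is bar-invariant by (a); expanding, $\iota^\sigma(C_w') = q^{-\ell(w)}\sum_y P_{y,w}(q^2)\,a_{(y,y^{-1})}$. Since $\ell((w,w^{-1})) = 2\ell(w)$ and $q = v^2$, we have $q^{-\ell(w)} = v^{-\ell((w,w^{-1}))}$, so $\iota^\sigma(C_w') = v^{-\ell((w,w^{-1}))}\sum_y P_{y,w}(q^2)\,a_{(y,y^{-1})}$. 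This is a bar-invariant element of $\cM_{q^2}$ whose coefficients satisfy: the $(w,w^{-1})$-coefficient is $1$, the coefficient of $a_{(y,y^{-1})}$ vanishes unless $(y,y^{-1})\leq(w,w^{-1})$ in Bruhat order (which, by the product structure, is equivalent to $y\leq w$ in $W'$), and $P_{y,w}(q^2)$ has degree in $q$ at most $\tfrac12(\ell(w)-\ell(y)-1) = \tfrac14(\ell((w,w^{-1}))-\ell((y,y^{-1}))-2) < \tfrac12(\ell((w,w^{-1}))-\ell((y,y^{-1}))-1)$ — wait, I should be careful: as a polynomial in $q$ (not $v$), $P_{y,w}(q^2)$ has $q$-degree $2\cdot\deg_q P_{y,w}\le \ell(w)-\ell(y)-1 = \tfrac12(\ell((w,w^{-1}))-\ell((y,y^{-1})))-1 \le \tfrac12(\ell((w,w^{-1}))-\ell((y,y^{-1}))-1)$, so the degree bound (c) of Theorem-Definition~\ref{lv-thmdef3} holds. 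Hence by the uniqueness in Theorem-Definition~\ref{lv-thmdef3}, $\iota^\sigma(C_w') = A_{(w,w^{-1})}$ and $\Psig_{(y,y^{-1}),(w,w^{-1})} = P_{y,w}(q^2)$.

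**Main obstacle.** The routine-but-delicate part is the case-by-case verification of $T_{(s,1)}\cdot a_{(w,w^{-1})} = \iota^\sigma(T_sT_w)$ using the four-branch multiplication rule of Theorem-Definition~\ref{lv-thmdef1} — one must correctly track that $(s,1)^*\neq(s,1)$ forces the ``$s\act w = sws^*$'' branches and that $\ell((sw,w^{-1}s)) - \ell((w,w^{-1})) = \pm 2$ matches the parameters $q^2$ and $q^2-1$ appearing there. Everything else (injectivity/bijectivity, the algebra-homomorphism property of $\iota$, the bar-operator identity via uniqueness, and the deduction of (b)) is formal once this compatibility is in hand.
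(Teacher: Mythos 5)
Your proposal is correct and follows essentially the same route as the paper: identify $\I$ with $W'$ via $w\mapsto(w,w^{-1})$, verify the intertwining identity on the generators $T_{(s,1)}$ using that $(s,1)^*=(1,s)\neq(s,1)$ forces the $s\act w=sws^*$ branches of Theorem-Definition \ref{lv-thmdef1}, and deduce part (b) from the uniqueness in Theorem-Definition \ref{lv-thmdef3} after noting that lengths double; your explicit triangularity and degree-bound checks are exactly the details the paper leaves implicit. One caveat on the bar-operator step: your appeal to the uniqueness in Theorem-Definition \ref{lv-thmdef2} is not quite licensed as written, because that uniqueness characterizes the bar involution by the condition $\overline{h\cdot m}=\overline h\cdot\overline m$ for \emph{all} $h\in\cH_{q^2}$, whereas your intertwining identity only provides this for $h$ in the proper subalgebra $\iota(\cH'_{q^2})$ (elements such as $T_{(1,w)}$, which act through ``right multiplication'' under $\iota^\sigma$, are not covered). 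The repair is immediate within your own setup: taking $T'=T_1$ in the intertwining identity gives $\iota^\sigma(T)=\iota(T)\cdot a_1$ for all $T\in\cH'_{q^2}$, whence $\overline{\iota^\sigma(T)}=\overline{\iota(T)}\cdot\overline{a_1}=\iota(\overline T)\cdot a_1=\iota^\sigma(\overline T)$, using that $\iota$ commutes with the bar operators (clear on the basis, since $\iota(\overline{T_w})=(T_{(w^{-1},1)})^{-1}=\overline{T_{(w,1)}}$); this direct check is in the spirit of the paper's own ``one can similarly check,'' and your alternative route via Lusztig's closed formula for $\overline{a_w}$ would also work but needs the right-action correspondence to be made explicit.
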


%\begin{remark} In part (c),  the   expression $P_{y,w}(q^2)$ denotes a Kazhdan-Lusztig polynomial  attached to the Coxeter system $(W',S')$ with its parameter $q$ replaced by $q^2$.
%%the left hand expressions are (Laurent) polynomials attached to the Coxeter system $(W,S)$, while the right expressions are quantities  defined in terms of the (Laurent) polynomials %$P_{y,w}$, $h_{x,y;z}$,  $\wt h_{x,y;z}$  attached to the Coxeter system $(W',S')$. In particular, $h_{x,y;z}(v^2)$ denotes the Laurent polynomial obtained by replacing the parameter $v$ with $q=v^2$ in $h_{x,y;z} \in \cA$.
%\end{remark}

\begin{proof}
The map $w \mapsto (w,w^{-1})$ clearly defines a poset isomorphism $(W',\leq) \xrightarrow{\sim} (\I,\leq)$, as is noted in \cite[Example 3.2]{H2} and also \cite[Example 10.1]{R}. 
It follows that $\iota^\sigma$ is a bijection. On the other hand, it is easy to check that the map $\iota$ is an injective $\cA$-algebra homomorphism.
Now, since $s\act w= sws^*$ for all $s \in S$ and $w \in W$,  it follows from Theorem-Definition \ref{lv-thmdef1}  that 
$ \iota^\sigma(T_s T_w) = \iota(T_s) \iota^\sigma(T_w) = T_{(s,1)} a_{(w,w^{-1})}$ for all $s \in S'$ and $w \in W'$.
Since $\iota$ is a homomorphism and $\iota^\sigma$ is $\cA$-linear, we conclude that $\iota^\sigma(TT') = \iota(T) \iota^\sigma(T')$ for all $T,T' \in \cH_q'$.
One can similarly check that $\iota^\sigma(\overline {T_w}) = \overline{\iota^\sigma(T_w)}$ for $w \in W'$, which establishes the last assertion in part (a) by $\cA$-linearity.
%In turn, we note that  if $w \in W'$ then $T_w = T_w \cdot T_1$ and $a_{(w,w^{-1})} = T_{(w,1)}\cdot a_1$, so 
%\[ \iota^\sigma(\overline {T_w})= \iota^\sigma( T_{w^{-1}}^{-1} \cdot T_1) = \iota(T_{w^{-1}}^{-1}) \iota^\sigma(T_1) = T_{(w,1)^{-1}}^{-1} \cdot a_1 = \overline{T_{(w,1)}} \cdot \overline{a_1} = \overline {a_{(w,w^{-1})}} = \overline{\iota^\sigma(T_w)}.\] Part (a) now follows by $\cA$-linearity.

To prove part (b), 
note that  $\ell((w,w^{-1})) = 2\ell(w)$ for $w \in W'$, where on the left $\ell$ is interpreted as the length function of $(W,S)$ and on the right as the length function of $(W',S')$. 
%It follows from this and 
% Theorem-Definition \ref{kl-thmdef} that  $\(P_{y,w}(q^2)\)_{y,w \in W'}$ is a family of polynomials with the following properties: 
%\begin{itemize}
%\item For each $w \in W'$ the sum $v^{-\ell((w,w^{-1}))} \cdot \sum_{y \in W'} P_{y,w}(q^2) \cdot a_{(y,y^{-1})} \in \cM_{q^2}$ is equal to $\iota^\sigma(C_w)$ and so is invariant under the bar operator;
%\item $P_{y,w}(q^2) = \delta_{(y,y^{-1}),(w,w^{-1})}$ if $y \not < w$;
%\item $P_{y,w}(q^2)$ has degree at most $\frac{1}{2} (\ell((w,w^{-1})) - \ell((y,y^{-1}))-1)$ as a polynomial in $q$ if $y<w$.
%\end{itemize}
In light of this and Theorem-Definition \ref{kl-thmdef}, 
it follows from the uniqueness specified in Theorem-Definition \ref{lv-thmdef3}  
that %$\iota^\sigma(C_w) = A_{(w,w^{-1})}$ whence 
$ \Psig_{(y,y^{-1}),(w,w^{-1})} = P_{y,w}(q^2)$ for $y,w \in W'$.
\end{proof}

When $(W,S,*)$ is as in Proposition \ref{mod-prop}, one can express the polynomials $P^\pm_{y,w}$ and $h^\pm_{x,y;z}$ entirely in terms of polynomials attached to $(W',S')$. 
To prove these formulas, we require two lemmas.
Our first lemma 
applies to an arbitrary Coxeter system $(W,S)$ with an $S$-preserving involution $* \in \Aut(W)$.

\begin{lemma}\label{hstar-lem} If $x\in W$ and $y,z \in \I$ then $h^\sigma_{x,y;z} = h^\sigma_{x^*,y^*;z^*}$.
\end{lemma}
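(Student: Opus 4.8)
The plan is to prove $h^\sigma_{x,y;z} = h^\sigma_{x^*,y^*;z^*}$ by showing that the involution $*$ of $W$ induces a symmetry of the whole package $(\cH_{q^2}, \cM_{q^2})$ that is compatible with the bar operator and hence carries the twisted Kazhdan-Lusztig basis $(A_w)_{w\in\I}$ to itself (up to relabeling by $*$). Concretely, first I would observe that $*$ preserves $\I$ (since $w^{-1}=w^*$ forces $(w^*)^{-1}=w=(w^*)^*$) and is length-preserving, and that it extends to an $\cA$-algebra automorphism $\tau$ of $\cH_{q^2}$ via $\tau(T_w)=T_{w^*}$ — this is immediate from the defining relations, because $s^*\in S$ and $\ell$, the Bruhat order, and left/right multiplication are all $*$-equivariant. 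Likewise define the $\cA$-linear map $\rho:\cM_{q^2}\to\cM_{q^2}$ by $\rho(a_w)=a_{w^*}$; one checks from Theorem-Definition~\ref{lv-thmdef1} that $\rho(T_s\cdot a_w) = T_{s^*}\cdot a_{w^*} = \tau(T_s)\cdot\rho(a_w)$, using that $s\act w$ is defined symmetrically in terms of $\{sw,sws^*\}$ so that $(s\act w)^* = s^*\act w^*$, and that the four length conditions distinguishing the cases are invariant under $*$. Hence $\rho$ is a $\tau$-semilinear module isomorphism: $\rho(h\cdot m) = \tau(h)\cdot\rho(m)$ for all $h\in\cH_{q^2}$, $m\in\cM_{q^2}$.

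Next I would verify that $\rho$ commutes with the bar operator on $\cM_{q^2}$. By Theorem-Definition~\ref{lv-thmdef2} the bar operator is the unique $\ZZ$-linear involution with $\overline{a_1}=a_1$ and $\overline{h\cdot m}=\overline h\cdot\overline m$. Since $1^*=1$, we have $\rho(\overline{a_1}) = a_1 = \overline{\rho(a_1)}$; and since $\tau$ commutes with the bar operator of $\cH_{q^2}$ (again immediate on the $T_w$-basis, as $\overline{T_w}=(T_{w^{-1}})^{-1}$ and $(w^{-1})^* = (w^*)^{-1}$), the composite $m\mapsto \rho(\overline{\rho^{-1}(m)})$ is a $\ZZ$-linear involution of $\cM_{q^2}$ fixing $a_1$ and satisfying the semilinearity $m\mapsto\overline{m}$ relative to $\cH_{q^2}$; by uniqueness it equals the bar operator, i.e. $\rho\circ\overline{(\cdot)} = \overline{(\cdot)}\circ\rho$. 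Alternatively one can just quote Lusztig's explicit formula $\overline{a_w} = (-1)^{\ell(w)}(T_{w^{-1}})^{-1}a_{w^{-1}}$ from the excerpt and check $\rho$-equivariance directly, which is cleaner. Either way, it follows from the characterization in Theorem-Definition~\ref{lv-thmdef3} (uniqueness of the bar-invariant element $A_w$ with the degree/leading-term conditions, all of which are $*$-invariant since $\ell$ and $\leq$ are) that $\rho(A_w) = A_{w^*}$ for every $w\in\I$, and likewise $\tau(C_x)=C_{x^*}$ for $x\in W$ (using $P_{y,w}=P_{y^*,w^*}$, which is the ordinary-Kazhdan-Lusztig analogue proved the same way).

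Finally, apply $\rho$ to the defining relation $C_x A_y = \sum_{z\in\I} h^\sigma_{x,y;z}\, A_z$ from \eqref{h-def}. Semilinearity of $\rho$ and the two identities $\rho(A_y)=A_{y^*}$, $\tau(C_x)=C_{x^*}$ give
\[
C_{x^*} A_{y^*} \;=\; \tau(C_x)\,\rho(A_y) \;=\; \rho\!\left(\sum_{z\in\I} h^\sigma_{x,y;z} A_z\right) \;=\; \sum_{z\in\I} h^\sigma_{x,y;z}\, A_{z^*} \;=\; \sum_{z\in\I} h^\sigma_{x,y;\,z^*}\, A_{z},
\]
where in the last step we reindexed $z\mapsto z^*$ (a bijection of $\I$). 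Here I used that $\rho$ is $\cA$-linear, so the coefficients $h^\sigma_{x,y;z}\in\cA$ are untouched. Comparing with $C_{x^*}A_{y^*} = \sum_{z\in\I} h^\sigma_{x^*,y^*;z} A_z$ and using that $(A_z)_{z\in\I}$ is an $\cA$-basis yields $h^\sigma_{x^*,y^*;z} = h^\sigma_{x,y;\,z^*}$ for all $z$; replacing $z$ by $z^*$ gives the claim. The only point requiring genuine care is the case-by-case check that $\rho$ intertwines the $\cH_{q^2}$-actions — i.e. that $(s\act w)^* = s^*\act w^*$ together with the compatibility of the four length-comparison cases — but this is routine from the explicit description of $s\act w$ in the Notation preceding Theorem-Definition~\ref{lv-thmdef1}; everything else is a formal uniqueness argument.
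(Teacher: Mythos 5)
Your proposal is correct and follows essentially the same route as the paper: introduce the $\cA$-linear maps $T_w\mapsto T_{w^*}$ on $\cH_{q^2}$ and $a_w\mapsto a_{w^*}$ on $\cM_{q^2}$, check semilinearity for the module action, deduce $C_x\mapsto C_{x^*}$ and $A_y\mapsto A_{y^*}$, and compare coefficients in $C_{x^*}A_{y^*}$. The only difference is that you spell out (via bar-operator compatibility and the uniqueness in Theorem-Definition \ref{lv-thmdef3}) the step the paper dismisses as a straightforward exercise, which is fine.
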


\begin{proof}
Let $\varphi : \cH_{q^2} \to \cH_{q^2}$ and $\varphi^\sigma : \cM_{q^2} \to \cM_{q^2}$ be the unique $\cA$-linear maps with $\varphi(T_w) = T_{w^*}$ and $\varphi^\sigma(a_w) = a_{w^*}$. It is clear that $\varphi(T_s) \varphi^\sigma(a_w) = \varphi^\sigma(T_s a_w)$ for all $s \in S$ and $w \in \I$.
Because $*$ is an automorphism of $W$ preserving $S$, the map $\varphi$ is an automorphism of the $\cA$-algebra $\cH_{q^2}$, and from this  and $\cA$-linearity it follows  
%that $\varphi(T_x) \varphi^\sigma(a_w) = \varphi^\sigma(T_x a_w)$ for all $x \in W$ and $w \in \I$, and by $\cA$-linearity we conclude
 that $\varphi(T) \varphi^\sigma(a) = \varphi^\sigma(Ta)$ for all $T \in \cH_{q^2}$ and $a \in \cM_{q^2}$.
It is a straightforward exercise to show that  
\[\varphi^\sigma(A_y) = A_{y^*}\text{ for all $y \in \I$} \qquand  \varphi(C_x) = C_{x^*}\text{ for all $x \in W$}\] and consequently 
$  \varphi^\sigma\(\sum_{z \in W} h^\sigma_{x,y;z} A_z\) = \varphi^\sigma(C_x A_y) =\varphi(C_x) \varphi^\sigma( A_y) = C_{x^*} A_{y^*}.$
The left hand side of the preceding equation is  equal to $\sum_{z \in W} h^\sigma_{x,y;z} A_{z^*} $ while the right hand side is equal to $\sum_{z \in W} h^\sigma_{x^*,y^*;z^*} A_{z^*}$, so since $\(A_z\)_{z \in \I}$ is a basis for $\cM_{q^2}$ we conclude that $h^\sigma_{x,y;z} = h^\sigma_{x^*,y^*;z^*}$.
\end{proof}

Our second lemma applies only to the special situation of Proposition \ref{mod-prop}.

\begin{lemma}\label{lastbit-lem}
Suppose that $(W,S)$, $(W',S')$, and $*$ are defined as in Proposition \ref{mod-prop}. 
\ben

\item[(a)] For all $y,y',w,w' \in W'$, we have $P_{(y,y'),(w,w')} = P_{y,w} P_{y',w'}$, and 
   it holds  that 
\[c_{(w,w')} = c_{(w,1)} c_{(1,w')} = c_{(1,w')} c_{(w,1)} 
\quand  
C_{(w,w')} = C_{(w,1)} C_{(1,w')} = C_{(1,w')} C_{(w,1)}
\]
 in the respective Hecke algebras $\cH_{q}$ and $\cH_{q^2}$ attached to $(W,S)$.

\item[(b)] For all $x,y,z \in W'$, we have $h_{(x,1),(y,1);(z,1)} = h_{(1,y^{-1}),(1,x^{-1});(1,z^{-1})} = h_{x,y;z}$.

\item[(c)] For all $x,y,z\in W'$, we have  $h^\sigma_{(x,1),(y,y^{-1});(z,z^{-1})} = h^\sigma_{(1,y^{-1}),(x,x^{-1});(z,z^{-1})} = h_{x,y;z}(v^2).$

\een
\end{lemma}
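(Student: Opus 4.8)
The plan is to prove the three parts of Lemma~\ref{lastbit-lem} in order, bootstrapping each from the preceding one and from the structural facts already established in Proposition~\ref{mod-prop}.

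\textbf{Part (a).} The factorization $P_{(y,y'),(w,w')} = P_{y,w}P_{y',w'}$ is the standard product formula for Kazhdan--Lusztig polynomials of a direct product of Coxeter systems; I would prove it by verifying that the element $c_{(w,1)}c_{(1,w')}$ satisfies the three defining properties of Theorem-Definition~\ref{kl-thmdef} for $c_{(w,w')}$. The key input is that $\ell((w,w')) = \ell(w) + \ell(w')$, that the Bruhat order on $W' \times W'$ is the product order, and that the bar operator on $\cH_q$ of $(W,S)$ restricts on the two commuting sub-Hecke-algebras generated by $\{c_{(s,1)}\}$ and $\{c_{(1,s)}\}$ to the respective bar operators of the factors; thus $\overline{c_{(w,1)}c_{(1,w')}} = c_{(w,1)}c_{(1,w')}$, the support condition (b) is immediate from the product Bruhat order, and the degree bound (c) follows by multiplying the degree bounds from each factor. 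The elements $c_{(w,1)}$ and $c_{(1,w')}$ commute because they lie in Hecke subalgebras generated by disjoint, mutually commuting sets of simple reflections. The identical argument with $q$ replaced by $q^2$ gives the statement for $C_{(w,w')}$ in $\cH_{q^2}$; alternatively, one applies the substitution $q \mapsto q^2$ to the already-proved identity for $c$.

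\textbf{Part (b).} Using part (a), $c_{(x,1)}c_{(y,1)} = \sum_z h_{x,y;z}(v)\, c_{(z,1)}$ follows by embedding the first-factor Hecke subalgebra: the $\cA$-algebra map $c_w \mapsto c_{(w,1)}$ is an injective homomorphism (the first-factor subalgebra of $\cH_q$ of $(W,S)$ is isomorphic to $\cH_q$ of $(W',S')$ via $t_w \mapsto t_{(w,1)}$, and this isomorphism carries Kazhdan--Lusztig bases to Kazhdan--Lusztig bases by part (a)). Hence $h_{(x,1),(y,1);(z,1)} = h_{x,y;z}$, and all structure constants $h_{(x,1),(y,1);(z',z'')}$ with $z'' \neq 1$ vanish. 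For the second equality $h_{(1,y^{-1}),(1,x^{-1});(1,z^{-1})} = h_{x,y;z}$, I would invoke the well-known symmetry $h_{x,y;z} = h_{y^{-1},x^{-1};z^{-1}}$ of Kazhdan--Lusztig structure constants (a consequence of the anti-automorphism $t_w \mapsto t_{w^{-1}}$ of $\cH_q$, which fixes each $c_w$ up to $c_w \mapsto c_{w^{-1}}$), combined with the first-equality argument applied to the second factor.

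\textbf{Part (c).} This is the main obstacle, since it ties together the $\cH_{q^2}$-module structure on $\cM_{q^2}$ with the Hecke-algebra computation. The idea is to exploit the isomorphisms $\iota, \iota^\sigma$ from Proposition~\ref{mod-prop}. From Proposition~\ref{mod-prop}(a) we have $\iota^\sigma(TT') = \iota(T)\iota^\sigma(T')$ and $\iota^\sigma$ bijective; I would first check that $\iota$ maps the Kazhdan--Lusztig basis of $\cH'_{q^2}$ to $\{C_{(w,1)}\}_w$ and $\iota^\sigma$ maps it to $\{A_{(w,w^{-1})}\}_w$. The first holds because $\iota(C'_w) = q^{-\ell(w)}\sum_y P_{y,w}(q^2)\iota(T_y) = q^{-\ell(w)}\sum_y P_{y,w}(q^2) T_{(y,1)}$, which equals $C_{(w,1)}$ by part (a) (the polynomials match and lengths match since $\ell((y,1)) = \ell(y)$) together with the fact that $C_{(w,1)}$ is supported on $\{(y,1)\}$. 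The second holds by Proposition~\ref{mod-prop}(b): $\iota^\sigma(C'_w) = q^{-\ell(w)}\sum_y P_{y,w}(q^2) a_{(y,y^{-1})}$, which must equal $A_{(w,w^{-1})}$ because $\ell((w,w^{-1})) = 2\ell(w)$ and $P^\sigma_{(y,y^{-1}),(w,w^{-1})} = P_{y,w}(q^2)$, and $\iota^\sigma$ being bijective forces the support to be exactly $\{(y,y^{-1})\}$. Then applying $\iota^\sigma$ to $C'_x C'_y = \sum_z h_{x,y;z}(v)\, C'_z$ in $\cH'_{q^2}$ — which by substituting $q \mapsto q^2$ in part (b) reads $C'_x C'_y = \sum_z h_{x,y;z}(v^2)\, C'_z$ in the variable of $\cH'_{q^2}$ (here I must be careful: the structure constants of $\cH'_{q^2}$ are $h_{x,y;z}(v^2)$, not $h_{x,y;z}(v)$, precisely because the defining relations use $q^2$) — yields $C_{(x,1)} A_{(y,y^{-1})} = \sum_z h_{x,y;z}(v^2)\, A_{(z,z^{-1})}$, which is exactly $h^\sigma_{(x,1),(y,y^{-1});(z,z^{-1})} = h_{x,y;z}(v^2)$. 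Finally, for $h^\sigma_{(1,y^{-1}),(x,x^{-1});(z,z^{-1})} = h_{x,y;z}(v^2)$, I would apply Lemma~\ref{hstar-lem} with the involution $*$: since $(1,y^{-1})^* = (y^{-1},1)$, $(x,x^{-1})^* = (x^{-1},x)$, and $(z,z^{-1})^* = (z^{-1},z)$, Lemma~\ref{hstar-lem} gives $h^\sigma_{(1,y^{-1}),(x,x^{-1});(z,z^{-1})} = h^\sigma_{(y^{-1},1),(x^{-1},x);(z^{-1},z)}$; relabeling and using the first part of (c) together with the $h_{x,y;z} = h_{y^{-1},x^{-1};z^{-1}}$ symmetry from part (b) completes the identification. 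The delicate points throughout are keeping track of which variable ($v$ versus $v^2$, i.e.\ $q$ versus $q^2$) the structure constants are evaluated at, and verifying the support claims that upgrade ``$\iota^\sigma$ sends $C'_w$ to something proportional to a sum including $A_{(w,w^{-1})}$'' to ``$\iota^\sigma(C'_w) = A_{(w,w^{-1})}$ exactly'' — both of which follow cleanly from bijectivity of $\iota^\sigma$ and the length identity $\ell((w,w^{-1})) = 2\ell(w)$.
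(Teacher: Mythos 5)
Your proposal is correct and takes essentially the same route as the paper: parts (a) and (b) are the standard direct-product computation that the paper leaves as an exercise (including the use of the symmetry $h_{x,y;z}=h_{y^{-1},x^{-1};z^{-1}}$), and part (c) is obtained, exactly as in the paper, by first identifying $\iota(C_w)=C_{(w,1)}$ and $\iota^\sigma(C_w)=A_{(w,w^{-1})}$ and then applying $\iota^\sigma$ to $C_xC_y$, with the correct observation that the structure constants appear evaluated at $v^2$ because the parameter is $q^2$. The second equality in (c) is deduced, as in the paper, from Lemma \ref{hstar-lem} combined with the identity $h_{x,y;z}=h_{y^{-1},x^{-1};z^{-1}}$.
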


\begin{proof}
Parts (a) and (b) follow as a straightforward exercise from Theorem-Definition \ref{kl-thmdef}.
%, using (1) the fact that $t_w \mapsto t_{(w,1)}$ and $t_w \mapsto t_{(1,w)}$ are $\cA$-algebra embeddings of the Hecke algebra of $(W',S')$ in the Hecke algebra of $(W,S)$, which commute with all relevant bar operators, and (2) the fact that $t_{(w,w')} = t_{(w,1)} t_{(1,w')} = t_{(1,w')} t_{(w,1)}$ in $\cH_{q}$ for all $w,w' \in W'$, with a similar formula holding in $\cH_{q^2}$.
%In light of Corollary \ref{kl-cor}(c), part (b) follows  from the fact implied by part (a) that the $\cA$-algebra embeddings with $t_w \mapsto t_{(w,1)}$ and $t_w \mapsto t_{(1,w)}$ for $w \in W'$ send $c_w$  to $c_{(w,1)}$ and $c_{(1,w)}$ respectively. 
%
%
To prove part (c),  recall the definitions of the maps $\iota$ and $\iota^\sigma$ above, and note from part (a) and Proposition \ref{mod-prop}(c) that for  $w \in W'$ we have $\iota(C_w) = C_{(w,1)}$ and $\iota^\sigma(C_w) = A_{(w,w^{-1})}$.
Therefore
 for $x,y,z \in W'$ it holds that 
\[ \sum_{z \in W'} h^\sigma_{(x,1),(y,y^{-1});(z,z^{-1})} A_{(z,z^{-1})} = C_{(x,1)} A_{(y,y^{-1})} =\iota^\sigma(C_xC_y) =   \sum_{z \in W'} h_{x,y;z}(v^2) A_{(z,z^{-1})}. \]
Note that on the right  $h_{x,y;z}$ is evaluated at $v^2$ rather than at $v$ because we are computing the product $C_xC_y$ in a Hecke algebra with parameter $q^2$ rather than $q$.
Thus $h^\sigma_{(x,1),(y,y^{-1});(z,z^{-1})} =  h_{x,y;z}(v^2)$.
 Combining this fact with the well-known identity $h_{x,y;z} = h_{y^{-1},x^{-1};z^{-1}}$ (see \cite[Section 2.1]{Fokko}) and Lemma \ref{hstar-lem} gives the 
 second equality in part (c).
% finally gives 
%\[ h^\sigma_{(1,y^{-1}),(x,x^{-1});(z,z^{-1})} = h^\sigma_{(y^{-1},1),(x^{-1},x);(z^{-1},z)} = h_{y^{-1},x^{-1};z^{-1}}(v^2) = h_{x,y;z}(v^2).\]
\end{proof}

We may now state the main result of this section.

\begin{proposition}\label{recovery-thm} Suppose that $(W,S)$, $(W',S')$, and $*$ are defined as in Proposition \ref{mod-prop}. 

\ben
\item[(a)]  The set  of polynomials $\left\{ P^\pm_{y,w} : y,w \in \I\right\}$ is equal to
\[ %\Bigl\{ P^\pm_{y,w} : y,w \in \I\Bigr\} =
 \Bigl\{ \tfrac{1}{2}\( P_{y,w}(q)^2 \pm P_{y,w}(q^2) \) : y,w \in W' \Bigr\}.\]

\item[(b)]  The set of polynomials $\left\{ P^\pm_{y,w} - P^\pm_{z,w} : y,z,w \in \I,\ y \leq z\right\}$ is equal to
\[ %\Bigl\{ P^\pm_{y,w} - P^\pm_{z,w} : y,z,w \in \I,\ y \leq z\Bigr\} =
 \Bigl\{ \tfrac{1}{2}\( P_{y,w}(q)^2-P_{z,w}(q)^2\) \pm \tfrac{1}{2}\(P_{y,w}(q^2) - P_{z,w}(q^2) \) : y,z,w \in W',\ y\leq z \Bigr\}.
 \]

\item[(c)] The set  of polynomials $\left\{ h^\pm_{x,y;z} : x \in W,\ y,z \in \I \right\}$ is equal to
\[ %\Bigl\{ h^\pm_{x,y;z} : x \in W,\ y,z \in \I \Bigr\} =
\Bigl\{ \tfrac{1}{2}\( f_{w,x,y;z}(v)^2 \pm f_{w,x,y;z}(v^2)  \) : w,x,y,z\in W' \Bigr\}
\]
where we define $f_{w,x,y;z} = \sum_{g\in W'} h_{w,x;g} h_{g,y;z} \in \cA$ for $w,x,y,z \in W'$.

\een

\end{proposition}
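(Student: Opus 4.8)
The plan is to compute each of the three sets by translating everything through the identifications from Proposition \ref{mod-prop} and Lemma \ref{lastbit-lem}, so that the twisted data on $(W,S,*)$ become ordinary Kazhdan-Lusztig data on $(W',S')$. First I would use the poset isomorphism $w \mapsto (w,w^{-1})$ from $(W',\leq)$ onto $(\I,\leq)$ noted in the proof of Proposition \ref{mod-prop}: it shows that as $y,w$ range over $\I$ we may write $y = (y_0, y_0^{-1})$ and $w = (w_0, w_0^{-1})$ with $y_0, w_0$ ranging over $W'$, and that $y \leq w$ in $\I$ iff $y_0 \leq w_0$ in $W'$. So the index sets match up, and it remains only to identify the polynomials.

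For part (a): by Proposition \ref{mod-prop}(b) we have $\Psig_{(y_0,y_0^{-1}),(w_0,w_0^{-1})} = P_{y_0,w_0}(q^2)$, while by Lemma \ref{lastbit-lem}(a) the ordinary Kazhdan-Lusztig polynomial of $(W,S)$ factors as $P_{(y_0,y_0^{-1}),(w_0,w_0^{-1})} = P_{y_0,w_0}(q)^2$ (here using $q = v^2$ as the variable, and noting $P_{(a,b),(c,d)} = P_{a,c}P_{b,d}$ together with $P_{y_0^{-1},w_0^{-1}} = P_{y_0,w_0}$, the latter a standard symmetry of Kazhdan-Lusztig polynomials). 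Substituting into the definition $P^\pm_{y,w} = \tfrac12(P_{y,w} \pm \Psig_{y,w})$ gives exactly $\tfrac12\bigl(P_{y_0,w_0}(q)^2 \pm P_{y_0,w_0}(q^2)\bigr)$, and as $(y_0,w_0)$ runs over all pairs in $W'$ this produces precisely the asserted set. Part (b) is then immediate: subtract the expression for $P^\pm_{(z_0,z_0^{-1}),(w_0,w_0^{-1})}$ from that for $P^\pm_{(y_0,y_0^{-1}),(w_0,w_0^{-1})}$, using that $y \leq z$ in $\I$ corresponds to $y_0 \leq z_0$ in $W'$, and one obtains the displayed formula directly.

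For part (c): fix $x \in W$ and $y,z \in \I$. Write $y = (y_0,y_0^{-1})$, $z = (z_0,z_0^{-1})$, and $x = (x_1,x_2)$ with $x_1,x_2 \in W'$. By Lemma \ref{lastbit-lem}(a), $C_{(x_1,x_2)} = C_{(x_1,1)}C_{(1,x_2)}$ in $\cH_{q^2}$, so using the module action and the definition of $h^\sigma$ together with Lemma \ref{lastbit-lem}(c), one expands $C_{(x_1,x_2)} A_{(y_0,y_0^{-1})}$ in the basis $\(A_z\)_{z\in\I}$ and finds that $h^\sigma_{x,y;z}$ equals a convolution of two factors of the form $h_{\,\cdot\,,\,\cdot\,;\,\cdot\,}(v^2)$ coming from $(W',S')$ — concretely, after bookkeeping with Lemma \ref{lastbit-lem}(c) and the identities $h_{a,b;c} = h_{b^{-1},a^{-1};c^{-1}}$ and Lemma \ref{hstar-lem}, this is $f_{w_0,x_0,y_0;z_0}(v^2)$ for suitable $w_0,x_0 \in W'$ determined by $x_1,x_2$, where $f$ is as defined in the statement. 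Meanwhile $\wt h_{x,y;z}$, defined by $c_x c_y c_{(x^*)^{-1}} = \sum_z \wt h_{x,y;z} c_z$ and computed via \eqref{tohtilde}, becomes — again using the factorizations in Lemma \ref{lastbit-lem}(a)--(b), the symmetry $P_{y^{-1},w^{-1}} = P_{y,w}$, and $(x^*)^{-1} = (x_2,x_1)^{-1} = (x_2^{-1},x_1^{-1})$ — a product of two copies of the same $f_{w_0,x_0,y_0;z_0}$ but now in the variable $v$, i.e.\ $f_{w_0,x_0,y_0;z_0}(v)^2$. Plugging both into $h^\pm_{x,y;z} = \tfrac12(\wt h_{x,y;z} \pm h^\sigma_{x,y;z})$ yields $\tfrac12\bigl(f_{w_0,x_0,y_0;z_0}(v)^2 \pm f_{w_0,x_0,y_0;z_0}(v^2)\bigr)$; and checking that $(w_0,x_0,y_0)$ ranges over all triples in $(W')^3$ as $(x,y)$ ranges over $W \times \I$ completes the identification of the set.

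I expect the main obstacle to be part (c): the bookkeeping needed to show that the various index substitutions (splitting $x = (x_1,x_2)$, tracking the effect of $*$ and of the inverses in $(x^*)^{-1}$, and reconciling the two factors so that they are literally the \emph{same} polynomial $f_{w_0,x_0,y_0;z_0}$ evaluated at $v$ versus $v^2$) all match up correctly. This is where Lemmas \ref{hstar-lem} and \ref{lastbit-lem}(b)--(c) and the standard symmetry $h_{a,b;c} = h_{b^{-1},a^{-1};c^{-1}}$ must be combined with care; the algebra in parts (a) and (b) is by comparison routine once the poset dictionary is in place.
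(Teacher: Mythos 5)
Your proposal is correct and takes essentially the same approach as the paper: parts (a)--(b) from Proposition \ref{mod-prop}(b), Lemma \ref{lastbit-lem}(a) and the symmetry $P_{y,w}=P_{y^{-1},w^{-1}}$, and part (c) by splitting everything through the two commuting copies of $(W',S')$ via Lemma \ref{lastbit-lem} (with Lemma \ref{hstar-lem} and $h_{x,y;z}=h_{y^{-1},x^{-1};z^{-1}}$) so that $\wt h$ and $h^\sigma$ become the same $f$ evaluated at $v$ and $v^2$. The only difference is cosmetic: the paper sidesteps your ``suitable $w_0,x_0$'' bookkeeping in (c) by parametrizing the first argument as $x=(w,y^{-1})$ from the start, yielding $\wt h_{(w,y^{-1}),(x,x^{-1});(z,z^{-1})}=f_{w,x,y;z}(v)^2$ and $h^\sigma_{(w,y^{-1}),(x,x^{-1});(z,z^{-1})}=f_{w,x,y;z}(v^2)$ directly; in your labeling the slots of $f$ determined by $x=(x_1,x_2)$ are the first and \emph{third} (namely $x_1$ and $x_2^{-1}$), with $y_0$ in the second, but this does not affect the set-equality argument.
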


\begin{remark}
Observe that the polynomials $f_{w,x,y;z}$  defined in part (c) of this result are  the structure constants satisfying $c_w c_x c_y = \sum_{z \in W'} f_{w,x,y;z} c_z$ for $w,x,y \in W'$.
\end{remark}

\begin{proof}
Parts (a) and (b) follow by comparing the definition of the polynomials $P^\pm_{y,w}$ for $y,w \in \I$ with Proposition \ref{mod-prop}(c) and Lemma \ref{lastbit-lem}(a), while noting the well-known identity $P_{y,w} = P_{y^{-1},w^{-1}}$ \cite{KL}.
To prove part (c) it suffices to show that for $w,x,y,z \in W'$ we have 
\be\label{f-eq} \wt h_{(w,y^{-1}),(x,x^{-1});(z,z^{-1})} = f_{w,x,y;z}(v)^2 \qquand h^\sigma_{(w,y^{-1}),(x,x^{-1});(z,z^{-1})} = f_{w,x,y;z}(v^2).\ee
To check the left identity, we compute from Lemma \ref{lastbit-lem}(a) that
\[  c_{(w,y^{-1})} c_{(x,x^{-1})} c_{(y,w^{-1})} =\( c_{(w,1)} c_{(x,1)} c_{(y,1)}\) \cdot \(c_{(1,y^{-1})} c_{(1,x^{-1})} c_{(1,w^{-1})} \).
\]
Applying Lemma \ref{lastbit-lem}(b) to the products $ \(c_{(w,1)} c_{(x,1)}\) \cdot c_{(y,1)} $ and $c_{(1,y^{-1})} \cdot \(c_{(1,x^{-1})} c_{(1,w^{-1})}\) $, noting our parenthesizations, shows that
\[ 
 c_{(w,1)} c_{(x,1)} c_{(y,1)} =\sum_{z \in W'} f_{w,x,y;z} c_{(z,1)} 
\qquand 
c_{(1,y^{-1})} c_{(1,x^{-1})} c_{(1,w^{-1})} = \sum_{z \in W'} f_{w,x,y;z} c_{(1,z^{-1})}.
\]
By Lemma \ref{lastbit-lem}(a) we thus have $c_{(w,y^{-1})} c_{(x,x^{-1})} c_{(x,w^{-1})}  = \sum_{z,z' \in W'} f_{w,x,y;z}  f_{w,x,y;z'} \cdot c_{(z',z^{-1})}$, from which the first  identity in \eqref{f-eq} follows.
To check the second equality in \eqref{f-eq}, we note from Lemma \ref{lastbit-lem}(a) that
$C_{(w,y^{-1})} A_{(x,x^{-1})} = C_{(1,y^{-1})}C_{(w,1)}  A_{(x,x^{-1})}$, which implies  by Lemma \ref{lastbit-lem}(c) that
\[ h^\sigma_{(w,y^{-1}),(x,x^{-1});(z,z^{-1})} =  \sum_{g \in W'} h^\sigma_{(w,1),(x,x^{-1});(g,g^{-1})}  h^\sigma_{(1,y^{-1}),(g,g^{-1});(z,z^{-1})} = f_{x,w,y;z}(v^2)\]
%\[ \ba C_{(w,y^{-1})} A_{(x,x^{-1})}& = C_{(1,y^{-1})}C_{(w,1)}  A_{(x,x^{-1})}
%\\
%&
% = \sum_{g \in W'} h^\sigma_{(w,1),(x,x^{-1});(g,g^{-1})} C_{(1,y^{-1})}  A_{(g,g^{-1})}
%\\
%&
%= \sum_{z \in W'} \sum_{g \in W'} h^\sigma_{(w,1),(x,x^{-1});(g,g^{-1})}  h^\sigma_{(1,y^{-1}),(g,g^{-1});(z,z^{-1})}   A_{(z,z^{-1})}
%.\ea
%\]
as desired. Therefore both identities in \eqref{f-eq} hold so part (c) holds.
\end{proof}

\section{Reduction to the irreducible case}\label{reduction-sect}

We devote this section to the proof of Theorem \ref{x-thm} from the introduction. Our proof 
depends on a few preliminary facts, which occupy the next three subsections.

\subsection{Facts about unimodal polynomials}\label{unimodal-sect}

\def\darga{\mathrm{darga}}

Recall the definition of unimodality from Section \ref{conj-sect}. In particular, note that if a polynomial $f \in \ZZ[x]$ is  unimodal then automatically $f \in \NN[x]$.
Let $f \in \cA$ be a Laurent polynomial with degree $d$ in $v$. We say that 
$f $ is \emph{balanced} 
if  $v^d f$ is polynomial in $q=v^2$ such that
\[ v^d f = a_0 + a_1 q + a_2q^2 + \dots + a_d q^d\]
for some integers $a_i \in \ZZ$ with $a_i = a_{d-i}$ for all $0\leq i \leq d$.
We say that $f $ is \emph{balanced unimodal} if $f$ is a balanced and additionally $v^df$ is a unimodal polynomial in $q$. 
Note that 0 is balanced unimodal since we consider the zero polynomial to have degree 0.

\begin{lemma}\label{bal-lem}
Suppose $f,g \in \cA$ are nonzero and balanced unimodal.
Then the product $fg$ is balanced unimodal,
while the sum $f+g$ is balanced unimodal if and only if 
the degrees of $f$ and $g$ as polynomials in $v$ are either both even or both odd.
%
%\ben
%\item[(a)] The product $fg$ is balanced unimodal.
%\item[(b)] Assume $f $ and $g$ are both nonzero; then the sum $f+g$ is balanced unimodal if and only if the degrees of $f$ and $g$ as polynomials in $v$ are either both even or both odd.
%
%\een
\end{lemma}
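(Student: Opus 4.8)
The plan is to work directly with the structure of balanced unimodal Laurent polynomials by translating everything into honest polynomials in $q$ and invoking two elementary classical facts: the product of two (finite, nonnegative) unimodal sequences that are each \emph{symmetric} (palindromic) is again symmetric and unimodal, and the sum of two symmetric unimodal sequences is symmetric and unimodal \emph{provided their ``centers'' coincide}. First I would set up notation: write $f$ with degree $d$ in $v$ so that $v^d f = \sum_{i=0}^{d} a_i q^i$ with $a_i = a_{d-i}$, the $a_i$ unimodal and nonnegative; similarly $v^e g = \sum_{j=0}^{e} b_j q^j$ with $b_j = b_{e-j}$. Note $d,e$ are automatically even since $v^d f$ is a polynomial in $q=v^2$, so $d/2$ and $e/2$ are the centers of symmetry of the two palindromes.

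For the product: $fg$ has degree $d+e$ in $v$, and $v^{d+e} fg = (v^d f)(v^e g) = \sum_k c_k q^k$ where $c_k = \sum_{i+j=k} a_i b_j$. Symmetry $c_k = c_{d+e-k}$ follows immediately from $a_i = a_{d-i}$, $b_j = b_{e-j}$ by reindexing. For unimodality, the key step is the classical lemma that the convolution of two symmetric unimodal nonnegative sequences is symmetric unimodal — this is standard (e.g. a consequence of the fact that a symmetric unimodal polynomial is a nonnegative combination of the ``atoms'' $q^a + q^{a+1} + \dots + q^{d-a}$, and a product of two such atoms is symmetric unimodal by a direct check, and the property is preserved under nonnegative linear combinations with a common center). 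I would either cite this or include the short atom-decomposition argument; that decomposition argument is clean and self-contained, so I lean toward including it. This also shows $c_k \geq 0$, so $fg$ is balanced unimodal.

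For the sum: $f+g$ is balanced unimodal iff $d = e$ as polynomials in $v$ — wait, more carefully, the claim is iff $d$ and $e$ have the same parity, but since both are automatically even that condition is \emph{equivalent} to them having the same parity as $v$-degrees; the real content is that balancedness of $f+g$ forces the two leading $v$-degrees to be compatible. Here I must be careful: the degree of $f+g$ in $v$ is $\max(d,e)$ (no cancellation of the top term, since leading coefficients $a_0 = a_d$ and $b_0 = b_e$ are positive as $f,g \neq 0$), and $v^{\max(d,e)}(f+g) = v^{\max(d,e)-d}(v^d f) + v^{\max(d,e)-e}(v^e g)$. If $d = e$ this is $\sum (a_i+b_i) q^i$, visibly symmetric about $d/2$ and unimodal (sum of two unimodal sequences with the same center $d/2$ is unimodal — another elementary fact), hence balanced unimodal. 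If $d \neq e$, say $d > e$, then $\max(d,e) - e = d - e > 0$, so the $g$-part gets multiplied by $v^{d-e}$, which is an \emph{odd} power of $v$ whenever $d-e$ is odd, making $v^{d}(f+g)$ fail to be a polynomial in $q$ at all; and even if $d - e$ is even, the $g$-contribution sits at $q$-exponents $\frac{d-e}{2}, \dots, \frac{d-e}{2} + \frac{e}{2}$, centered at $\frac{d-e}{2} + \frac{e}{4}$...

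Let me restate that last step more cleanly, since the parenthetical above is garbled. The clean statement: when $d \neq e$, after multiplying through by $v^{\max(d,e)}$ one of the two summands is shifted by a nonzero power of $v$; if that power is odd the result is not a polynomial in $q$ (not balanced); if that power is even, the two symmetric blocks have different centers of symmetry and one sees directly (e.g. by examining the extreme coefficients, or because a palindrome plus a shifted palindrome of different length and center is generically not palindromic — one checks the two outermost coefficients on the ``short'' side) that the sum is not symmetric. The main obstacle I anticipate is precisely nailing this converse direction rigorously: showing that distinct $v$-degrees genuinely \emph{break} balancedness (hence unimodality-with-symmetry) rather than just ``typically'' breaking it — I would handle it by the parity argument when $d-e$ is odd, and by a short explicit coefficient comparison (looking at the coefficient of $q^{0}$ versus the relevant interior coefficient, using positivity of all the leading $a_i,b_j$) when $d-e$ is even and positive. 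The forward direction (equal degrees $\Rightarrow$ balanced unimodal) and the entire product statement are routine given the two classical lemmas on symmetric unimodal sequences, so the converse is where the real care goes.
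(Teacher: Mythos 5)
Your product argument is fine and is essentially the paper's own (the paper simply cites Zeilberger's Observation 2 for the fact that the convolution of two symmetric unimodal nonnegative sequences is symmetric unimodal, which is your atom decomposition). The sum part, however, contains a genuine error built on a false premise. The degrees $d,e$ are \emph{not} automatically even: $f=v+v^{-1}$ has degree $1$ and is balanced unimodal, since $vf=1+q$. Balanced polynomials of odd $v$-degree are exactly the ones lying in $u\ZZ[u^2]$ with $u=v+v^{-1}$, and they are ubiquitous in this paper (Propositions \ref{h-vv} and \ref{rest-vv}); the parity hypothesis in Lemma \ref{bal-lem} is there precisely to separate these from the even-degree ones in $\ZZ[u^2]$. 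Worse, your ``clean statement'' proves the wrong assertion: you claim $f+g$ fails to be balanced whenever $d\neq e$, including when $d-e$ is even, whereas the lemma asserts (and the paper later needs, in the proofs of Lemma \ref{recovery-thm2} and Proposition \ref{reductionprime-prop}, where one sums balanced unimodal terms of \emph{different} degrees of equal parity) that $f+g$ is balanced unimodal in exactly that case. Concretely, $f=v^2+1+v^{-2}$ and $g=1$ have $d=2\neq 0=e$, yet $f+g=v^2+2+v^{-2}$ is plainly balanced unimodal, contradicting your converse.

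The source of the mistake is the center-of-symmetry computation. Balancedness says $v^df=\sum_{i=0}^d a_iq^i$ with $a_i=a_{d-i}$, i.e.\ $f(v)=f(v^{-1})$: every balanced Laurent polynomial is a palindrome centered at $v^0$, so all of them share a common center. If $d>e$ and $d\equiv e \pmod 2$, then $v^d(f+g)=v^df+q^{(d-e)/2}\,(v^eg)$; the first block occupies $q$-exponents $0,\dots,d$ with center $d/2$, and the second occupies $(d-e)/2,\dots,(d+e)/2$, again with center $d/2$ (your $e/4$ should be $e/2$: $v^eg$ has $q$-degree $e$, not $e/2$). Since symmetric unimodal nonnegative sequences with a common center are closed under addition, $f+g$ is balanced unimodal whenever $d\equiv e\pmod 2$ (note also that no top-degree cancellation can occur, as the leading coefficients are positive). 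The only genuine obstruction is parity: the $v$-exponents occurring in $f$ are congruent to $d$ and those in $g$ to $e$ modulo $2$, so if $d\not\equiv e$ there is no cancellation between them, $f+g$ contains exponents of both parities, and $v^{\deg(f+g)}(f+g)$ is not a polynomial in $q$; this—your odd-shift observation—is the entire ``only if'' direction.
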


\begin{proof}
Suppose $f$ and $g$ have degrees $d$ and $d'$ as Laurent polynomials in $v$. 
Then $v^d f$ and $v^{d'} g$ are ``symmetric unimodal'' in the sense of \cite{zel} and it follows that $fg$ is balanced unimodal by \cite[Observation 2]{zel}.
The remainder of the lemma follows by inspection.
%
%The desired properties follow by applying Lemma \ref{zel-lem}  to $v^df$ and $v^{d'}g$.
%%
%%%
%%To prove part (a), note that the product $fg$ then has degree $d+d'$, and that $v^{d+d'} fg = (v^d f) (v^{d'} g)$ is a product of symmetric unimodal polynomials in $q$, so by Lemma \ref{zel-lem}(a), $v^{d+d'} fg $ is  a symmetric unimodal polynomial in $q$. Thus $fg$ is balanced unimodal.
%%
%%To prove part (b), without loss of generality assume $d' \leq d$, so that $f+g$ has degree $d$  in the indeterminate $v$. If $d$ and $d'$ have distinct parities then $v^d(f+g)$ is not a polynomial in $q$ so is not balanced unimodal. If $d$ and $d'$ have the same parity then $v^dg$ and $v^df$ are both polynomials in $q$ which are symmetric unimodal with  darga $d$, in which case  is  a symmetric unimodal polynomial in $q$ by Lemma \ref{zel-lem}(b), $v^d(f+g)$, so $f+g$ is balanced unimodal.
\end{proof}

We also require the following technical  lemma.
 
 \begin{lemma}\label{myown-lem} Let $f \in \cA$ and define $f^\pm =  \frac{1}{2}\(f(v)^2 \pm f(v^2)\)$.
 \ben
 \item[(a)] 
Both $f^+$ and $f^-$   belong to $\cA$.

\item[(b)] If $f$ has nonnegative coefficients then  $f^+$ and $f^-$ have nonnegative coefficients.

\item[(c)] If $f$ is balanced unimodal then  $f^+$ and $f^-$ are balanced unimodal.
\een
\end{lemma}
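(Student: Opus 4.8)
The plan is to prove the three parts of Lemma~\ref{myown-lem} in order, with part~(a) as the crux and parts~(b),~(c) following relatively quickly once (a) is in hand. Throughout, write $f = \sum_i c_i v^i$ with $c_i \in \ZZ$. The first observation is that $f(v)^2$ and $f(v^2)$ have the \emph{same parity pattern} in a suitable sense: $f(v^2) = \sum_i c_i v^{2i}$ has only even-degree terms, and I claim $f(v)^2$ also has the property that its coefficient of $v^n$ is congruent mod $2$ to the coefficient of $v^n$ in $f(v^2)$ for every $n$. Indeed, the coefficient of $v^n$ in $f(v)^2$ is $\sum_{i+j=n} c_i c_j$; pairing the term $(i,j)$ with $(j,i)$ shows this sum is congruent mod $2$ to $\sum_{2i = n} c_i^2 = \sum_{2i=n} c_i$ (since $c_i^2 \equiv c_i \pmod 2$), which is exactly the coefficient of $v^n$ in $f(v^2)$. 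Hence $f(v)^2 - f(v^2) \in 2\cA$ and $f(v)^2 + f(v^2) = (f(v)^2 - f(v^2)) + 2f(v^2) \in 2\cA$ as well, so $f^\pm = \tfrac12(f(v)^2 \pm f(v^2)) \in \cA$. This proves (a).

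For part~(b), assume all $c_i \geq 0$. Then $f(v)^2$ and $f(v^2)$ both have nonnegative coefficients, so $f^+ = \tfrac12(f(v)^2 + f(v^2))$ visibly has nonnegative coefficients. For $f^-$, the coefficient of $v^n$ equals $\tfrac12\bigl(\sum_{i+j=n} c_i c_j - [n \text{ even}]\, c_{n/2}\bigr)$; when $n$ is odd this is $\tfrac12\sum_{i+j=n} c_ic_j \geq 0$, and when $n = 2m$ is even it is $\tfrac12\bigl(c_m^2 - c_m + 2\sum_{i<m} c_i c_{2m-i}\bigr) = \tfrac12 c_m(c_m - 1) + \sum_{i<m} c_i c_{2m-i} \geq 0$ since $c_m(c_m-1)\geq 0$ for $c_m \in \NN$. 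This proves (b).

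For part~(c), assume $f$ is balanced unimodal; in particular $f$ has nonnegative coefficients, so by (b) both $f^\pm$ have nonnegative coefficients. Let $d$ be the degree of $f$ as a Laurent polynomial in $v$, and let $e$ be its lowest degree, so that $f = v^e g$ with $g$ a genuine polynomial in $q = v^2$ (balanced means the coefficient sequence $c_e, c_{e+2}, \dots, c_d$ is palindromic), and $d - e$ is even. Now $f(v)^2 = v^{2e} g(v)^2$ has lowest degree $2e$ and top degree $2d$, and $f(v^2) = v^{2e} g(v^2)$ has lowest degree $2e$ and top degree $2d$ as well — the key point is that the substitution $v \mapsto v^2$ doubles both extreme exponents, matching the top and bottom of the square. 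So $f(v)^2$ and $f(v^2)$ are both Laurent polynomials supported on exponents in $\{2e, 2e+1, \dots, 2d\}$ (in fact only even exponents for $f(v^2)$), each balanced with the same $v$-degree $2d$; and each is balanced unimodal — $f(v)^2$ by Lemma~\ref{bal-lem} applied to $f\cdot f$, and $f(v^2)$ because the palindromic unimodal sequence $(c_e,\dots,c_d)$ remains palindromic unimodal after spacing out its entries with zeros (this needs a one-line check: inserting zeros between consecutive terms of a sequence $0\le a_0\le\cdots\le a_k\ge\cdots\ge a_n\ge 0$ preserves both the palindrome property and unimodality). Since $f(v)^2$ and $f(v^2)$ have equal $v$-degree $2d$ and equal lowest degree $2e$, their sum and difference are again balanced of $v$-degree $2d$; and by Lemma~\ref{bal-lem} (both have even $v$-degree $2d$) the sum $f(v)^2 + f(v^2)$ is balanced unimodal, hence so is $f^+ = \tfrac12(f(v)^2+f(v^2))$ after dividing by the constant $2$ (scaling by a positive constant preserves balanced unimodality, using (a) to see $f^+ \in \cA$). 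For $f^-$ one cannot simply invoke Lemma~\ref{bal-lem}, since it concerns sums not differences; instead I would argue directly on the symmetric unimodal coefficient sequences. Writing $v^{-2e} f(v)^2 = \sum_{k=0}^{2(d-e)} A_k q^k$ and $v^{-2e} f(v^2) = \sum_{k} B_k q^k$ with $B_k = c_{e+k}$ for $0\le k\le d-e$ and $B_k = 0$ otherwise (so $B$ is the "spaced-out" sequence, nonzero only on even $k$), both $(A_k)$ and $(B_k)$ are palindromic unimodal on $\{0,\dots,2(d-e)\}$ with the same center; then $\tfrac12(A_k - B_k)$ is palindromic, is nonnegative by (b), and I claim is unimodal. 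The cleanest route: observe $A_k - B_k = A_k - B_k$ where in fact $A_k \ge B_k$ for all $k$ — indeed $A_k = \sum_{i+j = k} c_{e+i}c_{e+j} \ge c_{e+k/2}^2 \ge c_{e+k/2} = B_k$ when $k$ even (using $c\ge 1 \Rightarrow c^2\ge c$, and $c=0$ trivially), and $A_k \ge 0 = B_k$ when $k$ odd. Unimodality of the difference of two centered palindromic unimodal sequences is not automatic in general, so here the real work is: show that $\tfrac12(A_k - B_k)$ is unimodal. I expect this to be the main obstacle. The approach I would take is to exploit that $A$ is the autocorrelation (self-convolution) of the centered palindromic unimodal sequence $(c_{e+i})_i$: self-convolutions of symmetric unimodal sequences are symmetric unimodal with a "large" middle, and subtracting the thin sequence $B$ (which is itself dominated termwise and whose mass sits at the center) should preserve unimodality. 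Concretely I would either cite the relevant fact from \cite{zel} about symmetric unimodal sequences and their convolutions (the same source Lemma~\ref{bal-lem} draws on), or give a short direct argument: since $f = v^e g$ with $g$ symmetric unimodal in $q$, write $g(q) = \sum_m b_m(q^m + q^{-m})$-type decomposition into symmetric unimodal "atoms" $q^a + q^{a-1} + \cdots + q^{-a}$ with nonnegative coefficients $b_a \ge 0$; then $g(v)^2$ is a nonnegative combination of products of atoms (each symmetric unimodal by \cite[Observation 2]{zel}) and $g(v^2)$ is the corresponding combination of the "stretched" atoms $q^{2a} + q^{2a-2} + \cdots$; for each atom the difference $\tfrac12(\text{atom}(v)^2 - \text{atom}(v^2))$ is an explicitly computable symmetric unimodal polynomial (a routine but elementary computation — it is $\tfrac12\big((q^a+\cdots+q^{-a})^2 - (q^{2a}+\cdots+q^{-2a})\big)$, which one checks equals $\sum$ of a visibly unimodal shape), and cross terms $\text{atom}_1(v)\cdot\text{atom}_2(v)$ are symmetric unimodal, so the whole of $f^-$ is a nonnegative combination of symmetric unimodal polynomials all centered at the same point, hence symmetric unimodal. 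Wrapping these pieces together, after translating back by the factor $v^{2e}$, gives that $f^-$ is balanced unimodal, completing (c).

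Remark on where care is needed: the \emph{same-center} condition is essential throughout part~(c) — the sum or difference of two symmetric unimodal polynomials with the same $v$-degree (equivalently, the same center, once we know both are balanced) is again symmetric unimodal when nonnegative, but this fails if the centers differ. The parity hypothesis in Lemma~\ref{bal-lem} is exactly what guarantees matching centers for sums in general; in our situation $f(v)^2$ and $f(v^2)$ automatically have matching top \emph{and} bottom degree, so both their sum and their difference stay centered, and the only genuine content left is the unimodality of the difference, handled as above via decomposition into atoms and \cite[Observation 2]{zel}.
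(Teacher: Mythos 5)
Parts (a) and (b) are fine and are essentially the paper's own (terse) coefficient computation. In part (c), however, there is a genuine error: your claimed ``one-line check'' that a palindromic unimodal sequence ``remains palindromic unimodal after spacing out its entries with zeros'' is false, and with it the argument you give for $f^{+}$. For instance $f = v^{-2}+1+v^{2}$ is balanced unimodal, but $f(v^{2}) = v^{-4}+1+v^{4}$, so $v^{4}f(v^{2}) = 1+q^{2}+q^{4}$ has coefficient sequence $1,0,1,0,1$, which is not unimodal; inserting zeros destroys unimodality whenever the original sequence has two consecutive nonzero entries. Consequently $f(v^{2})$ need not be balanced unimodal, you cannot apply Lemma \ref{bal-lem} to the sum $f(v)^{2}+f(v^{2})$, and your argument for $f^{+}$ does not go through as written.

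The good news is that the machinery you set up for $f^{-}$ repairs this, and it is exactly the paper's proof. Writing the centered polynomial as a nonnegative integer combination $\sum_a b_a h_a$ of interval atoms $h_a = q^{-a}+\dots+q^{a}$, one gets $2f^{\pm} = \sum_a b_a\bigl(h_a(v)^{2} \pm h_a(v^{2})\bigr) + \sum_a (b_a^{2}-b_a)\,h_a(v)^{2} + \sum_{a<a'} 2b_ab_{a'}\,h_a(v)h_{a'}(v)$, and every summand is symmetric unimodal of even $v$-degree with the same center: the first by the explicit computation you sketch (which works for both signs --- this is the check the paper also leaves to the reader), the others by \cite[Observation 2]{zel}, i.e.\ Lemma \ref{bal-lem}. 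So both $f^{+}$ and $f^{-}$ follow at once, and you should route $f^{+}$ through this decomposition rather than through the false claim about $f(v^{2})$. Note also that the rearrangement above is genuinely needed in your $f^{-}$ sketch: in $f(v)^{2}$ the atom squares appear with coefficient $b_a^{2}$, not $b_a$, so the leftover terms $(b_a^{2}-b_a)h_a(v)^{2}$ must be accounted for, using $b_a \in \NN$ so that $b_a^{2}\geq b_a$ --- a bookkeeping point your ``corresponding combination'' phrasing glosses over but which the paper records explicitly.
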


\begin{proof}
%If we write $f = \sum_{i \in \ZZ} c_i v^i$ where each $c_i \in \ZZ$ %(and only finitely many $c_i$'s are nonzero),
% then 
%$f^\pm = \sum_{i \in \ZZ} \frac{1}{2}(c_i^2 \pm c_i) v^{2i} +  \sum_{ i<j} (c_ic_j)v^{i+j}.$
%For all $c \in \ZZ$ it holds that  $c^2 \pm c$ is an even nonnegative integer, so $f^\pm \in \cA$ and if $f \in \NN[v,v^{-1}]$ then  $f^\pm \in \NN[v,v^{-1}]$ as well. This proves 

Parts (a) and (b) follow by computing the coefficients of $f^\pm$ in terms of those of $f$.
To prove part (c), suppose $f \in \cA\setminus\{0\}$ is balanced unimodal. We may assume $f$ is nonzero with degree $d$ as a polynomial in $v$. We need only show that $ 2f^+$ and $2f^-$ are balanced unimodal. To this end, note that $f$ is  a  linear combination with nonnegative integer coefficients of polynomials of the form $h_i \omdef = v^{-d}(q^i + \dots + q^{d-i})$ for integers $0 \leq i \leq d/2$. In particular, we may write $f = \sum_{0 \leq i \leq d/2} a_i h_i$ for some nonnegative integers $a_i \in \NN$ with $a_0 \neq 0$, and we then have
\be\label{temp-eq} 2f^\pm= \sum_{0 \leq i \leq d/2}  a_i(h_i^2 \pm  h_i(v^2)) +\sum_{0\leq i \leq d/2} (a_i^2-a_i) h_i^2+  \sum_{ 0\leq  i<j \leq d/2} (2a_ia_j)h_ih_j.\ee
To show that $2f^\pm$ is balanced unimodal, it is enough by Lemma \ref{bal-lem} to check that the terms $h_i^2 \pm  h_i(v^2)$ and $h_i^2$ and $h_ih_j$ occurring in the three sums in \eqref{temp-eq} are  balanced unimodal polynomials whose degrees in $v$ are all even.
This is a simple exercise, which we leave to the reader.
%%(Note that the  coefficients $a_i$ and $a_i^2-a_i$ and $2a_ia_j$ corresponding to these terms are all nonnegative integers.)
% Lemma \ref{bal-lem}(a) implies directly that the 
%products $h_i^2$ and $h_ih_j$ are balanced unimodal; moreover, these polynomials both have even degrees in $v$, given by $2d-4i$ and $2d-2i-2j$. In turn, it is straightforward to compute that 
% \[ h_i^2 =  \sum_{k=2i-d}^{d-2i}  b_k q^{k}\qquand  h_i(v^2) =  \sum_{k=2i-d}^{d-2i}  c_k q^{k}\]
% where $b_{2i-d},b_{2i-d+1},\dots,b_{d-2i}$ and $c_{2i-d},c_{2i-d+1},\dots,c_{d-2i}$ are respectively the sequences of integers
%\[ 1,2,3,\dots,d-i,d-i+1,d-i,\dots,3,2,1\qquand 1,0,1,0,\dots,0,1,0,1.\]
%It follows by inspection that  $h_i^2 + h_i(v^2)$ and $h_i^2 - h_i(v^2)$ are both balanced unimodal polynomials with even degrees in $v$. By Lemma \ref{bal-lem}(b) applied to \eqref{temp-eq}, we conclude    that the polynomials $2f^\pm$ are both balanced unimodal, as desired.
\end{proof}

\subsection{Facts about the structure constants}

In both propositions in this section, we let $u = v+v^{-1}$ and we let $(W,S)$ denote an arbitrary Coxeter system with an $S$-preserving involution $*$.

  \begin{proposition}\label{h-vv}
%Let 
%$(W,S)$ be any Coxeter system, set
% $u=v+v^{-1}$, and 
 Suppose  $x,y,z \in W$.
\ben
\item[(a)] If $\ell(x)+\ell(y) + \ell(z) $ is odd then $h_{x,y;z} \in u\ZZ[u^2]$.
\item[(b)] If $\ell(x)+\ell(y)+\ell(z)$ is even then $h_{x,y;z} \in \ZZ[u^2]$.
\een
%In particular, if $d$ is the degree of $h_{x,y;z}$ as a polynomial in $v$, then $v^d h_{x,y;z} \in \ZZ[q]$ is a palindromic polynomial in $q$.
\end{proposition}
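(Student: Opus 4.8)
The plan is to prove Proposition \ref{h-vv} by induction on $\ell(x)$, using the Kazhdan--Lusztig multiplication formula of Theorem \ref{kl-thm} together with the recurrence \eqref{hsig-reccc}-style reasoning applied to the ordinary structure constants. The base cases are $\ell(x)=0$ and $\ell(x)=1$. When $x=1$ we have $c_1 c_y = c_y$, so $h_{1,y;z}=\delta_{y,z}$, which lies in $\ZZ[u^2]$, and indeed $\ell(1)+\ell(y)+\ell(z)$ is even precisely when $\delta_{y,z}\neq 0$ (since then $\ell(y)=\ell(z)$), so both (a) and (b) hold vacuously/trivially. When $x=s\in S$, Theorem \ref{kl-thm} gives $c_sc_y = uc_y$ if $sy<y$ and $c_sc_y = c_{sy} + \sum_{sz'<z'<y}\mu(z',y)c_{z'}$ if $sy>y$; in the first case $h_{s,y;z}=u\delta_{y,z}$, which is in $u\ZZ[u^2]$ and $1+\ell(y)+\ell(z)$ is odd, matching (a); in the second case $h_{s,y;sy}=1\in\ZZ[u^2]$ with $1+\ell(y)+\ell(sy)=2\ell(y)$ even, matching (b), while $h_{s,y;z'}=\mu(z',y)\in\ZZ\subset\ZZ[u^2]$ for $sz'<z'<y$, and here $1+\ell(y)+\ell(z')$ is even exactly when $\ell(y)-\ell(z')$ is odd, which is precisely when $\mu(z',y)$ can be nonzero — so again (b) holds and (a) is vacuous. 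The key observation to extract from the base case is that the parity of $\ell(x)+\ell(y)+\ell(z)$ governs which of $u\ZZ[u^2]$, $\ZZ[u^2]$ the coefficient lies in, and that $\mu$-coefficients are integers appearing only in the ``even'' slots.

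For the inductive step, suppose $\ell(x)>1$ and pick $s\in S$ with $sx<x$. Writing $c_sc_{sx} = c_x + \sum_{sz<z<sx}\mu(z,sx)c_z$ and multiplying on the right by $c_y$, one gets
\[ c_x c_y = c_s c_{sx} c_y - \sum_{\substack{z\in W\\ sz<z<sx}} \mu(z,sx)\, c_z c_y. \]
Expanding $c_{sx}c_y = \sum_{z'} h_{sx,y;z'}c_{z'}$ and then $c_s c_{z'} = \sum_z h_{s,z';z} c_z$, and comparing coefficients of $c_z$, yields
\[ h_{x,y;z} = \sum_{z'\in W} h_{sx,y;z'}\, h_{s,z';z} \;-\; \sum_{\substack{z\in W\\ sz''<z''<sx}} \mu(z'',sx)\, h_{z'',y;z}. \]
Now I would analyze the parity of each term. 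By the inductive hypothesis $h_{sx,y;z'}\in u^{\epsilon}\ZZ[u^2]$ where $\epsilon\in\{0,1\}$ is the parity of $\ell(sx)+\ell(y)+\ell(z')$, and $h_{s,z';z}\in u^{\epsilon'}\ZZ[u^2]$ where $\epsilon'$ is the parity of $1+\ell(z')+\ell(z)$ (this is the base case already handled). Their product lies in $u^{\epsilon+\epsilon'}\ZZ[u^2]$, and modulo $2$, $\epsilon+\epsilon' \equiv \ell(sx)+\ell(y)+\ell(z') + 1 + \ell(z') + \ell(z) \equiv \ell(sx)+1+\ell(y)+\ell(z) \equiv \ell(x)+\ell(y)+\ell(z)$ since $\ell(x)=\ell(sx)+1$; the $\ell(z')$ terms cancel in pairs. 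So every term in the first sum lies in $u^{\delta}\ZZ[u^2]$ where $\delta$ is the parity of $\ell(x)+\ell(y)+\ell(z)$, as required. For the second sum: $\mu(z'',sx)$ is a nonzero integer only when $\ell(sx)-\ell(z'')$ is odd, i.e. $\ell(z'')\equiv \ell(sx)+1 = \ell(x)\pmod 2$; and by induction $h_{z'',y;z}\in u^{\eta}\ZZ[u^2]$ with $\eta$ the parity of $\ell(z'')+\ell(y)+\ell(z)\equiv \ell(x)+\ell(y)+\ell(z)=\delta\pmod 2$. Hence each surviving term of the second sum also lies in $u^{\delta}\ZZ[u^2]$, completing the induction.

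The one point requiring care — and the likely main obstacle — is making the parity bookkeeping fully rigorous, in particular justifying that the terms one ``discards'' really do vanish: one must invoke that $h_{sx,y;z'}=0$ unless $\ell(sx)+\ell(y)+\ell(z')$ has the parity for which the inductive hypothesis applies, but the statement as given covers both parities, so in fact no term is discarded and every nonzero term automatically lands in the correct coset of $\ZZ[u^2]$ inside $\cA$; the subtlety is instead that we need $h_{z'',y;z}\in\cA$ to make sense (true, it is a structure constant) and that $u\ZZ[u^2]$ and $\ZZ[u^2]$ are closed under the relevant operations — which is clear since $\cA = \ZZ[v,v^{-1}]$ is free of rank $2$ over $\ZZ[u]=\ZZ[v+v^{-1}]$ with basis $\{1,v\}$, and $\ZZ[u^2]\oplus u\ZZ[u^2]$ is the $\ZZ[u^2]$-span of $\{1,u\}$, stable under multiplication with the product of the two odd pieces landing back in $\ZZ[u^2]$ (as $u\cdot u = u^2$). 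Once this ring-theoretic scaffolding is in place the parity computation is a short, mechanical check. I would present the proof in exactly this order: ring-theoretic setup, base cases $\ell(x)\le 1$, then the inductive step with the two-term recurrence and the parity count for each sum.
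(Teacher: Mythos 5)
Your proposal is correct and follows essentially the same route as the paper: induction on $\ell(x)$ with base cases $\ell(x)\le 1$ handled via Theorem \ref{kl-thm} and the fact that $\mu(z,y)\neq 0$ forces $\ell(y)-\ell(z)$ odd, followed by the recurrence $h_{x,y;z}=\sum_{z'}h_{sx,y;z'}h_{s,z';z}-\sum_{sx'<x'<x}\mu(x',sx)h_{x',y;z}$ and the same parity bookkeeping for both sums. The only difference is that you spell out the base case and the $\ZZ[u^2]\oplus u\ZZ[u^2]$ ring-theoretic scaffolding more explicitly, which the paper leaves implicit.
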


%\begin{remark}
%Observe that in case (a), we have for some odd $d \in \NN$ and some integers $a_i \in \ZZ$ that \[h_{x,y;z} = a_1(v+v^{-1}) + a_3(v^3+v^{-3}) + \dots + a_d (v^d+v^{-d}),\]  while in case (b) we have for some even $d \in \NN$ and some integers $a_i \in \ZZ$ that
%\[ h_{x,y;z} = a_0 + a_2(v^2+v^{-2}) + \dots + a_d (v^d + v^{-d}).\]  In either case, it holds that $v^d h_{x.y;z} \in \ZZ[q]$ is a  polynomial in $q$, which is \emph{palindromic} in the sense that its sequence of coefficients is the same read forwards and backwards.
%\end{remark}

\begin{proof}
Since $h_{1,y;z} = \delta_{y,z}$, the proposition holds when $\ell(x) = 0$, and since $\mu(z,y)$ is nonzero only if $\ell(y)-\ell(z)$ is odd,  Theorem \ref{kl-thm} shows that the proposition also holds when $\ell(x)=1$.
Assume $\ell(x) \geq 2$ and that the proposition holds if we replace $x$ by any element of shorter length. Choose $s \in \Des(x)$. 
By Theorem \ref{kl-thm} we have $c_x = c_s c_{sx} - \sum_{x' \in W;\hs sx'<x'<sx} \mu(x',sx)c_{x'}$, and so 
 \be\label{h-rec}h_{x,y;z}= \sum_{z'\in W} h_{sx,y;z'}h_{s,z';z}  - \sum_{\substack{x' \in W \\ sx'<x'<x}} \mu(x',sx) h_{x',y;z}.\ee
Since $\mu(x',sx)$ is nonzero only if $\ell(x) - \ell(x')$ is even, it follows  by  our inductive hypothesis that $\sum_{{x' \in W; \hs sx'<x'<sx}} \mu(x',sx) h_{x',y;z}$ belongs to $u\ZZ[u^2]$ or $\ZZ[u^2]$ if $\ell(x)+\ell(y) + \ell(z)$ is odd or even respectively.
On the other hand,   for all $z' \in W$ the parities of 
\[\ell(sx)+\ell(y) + \ell(z') %= \ell(x) + \ell(y) + \ell(z')-1
\qquand 
\ell(s)+\ell(z') + \ell(z)% = \ell(z) + \ell(z') +1
\]
are distinct or equal according to whether $\ell(x)+\ell(y)+\ell(z)$ is odd or  even respectively. Therefore it follows likewise by  hypothesis that  $\sum_{z'\in W} h_{sx,y;z'}h_{s,z';z}$ belongs to $u\ZZ[u^2]$ or $\ZZ[u^2]$ if $\ell(x)+\ell(y) + \ell(z)$ is odd or even respectively.
The proposition thus holds for all $x$ by \eqref{h-rec} and induction.
\end{proof}

%As a corollary to the preceding result, we have this next proposition.

 \begin{proposition}\label{rest-vv}
%The Laurent polynomials $\wt h_{x,y;z}$ belong to the subring $\ZZ[v+v^{-1}] \subset \cA$.
%Let $(W,S)$ be a Coxeter system with an $S$-preserving involution $* \in \Aut(W)$, set $u=v+v^{-1}$, and 
Suppose  $x \in W$ and $y,z \in \I$.
\ben
\item[(a)] If $\ell(y) + \ell(z) $ is even then $\wt h_{x,y;z} $ and $h^\sigma_{x,y;z} $ and $h^\pm_{x,y;z} $ all belong to $\ZZ[u^2]$.
\item[(b)] If $\ell(y)+\ell(z)$ is odd then $\wt h_{x,y;z} $ and $h^\sigma_{x,y;z} $ and $h^\pm_{x,y;z} $ all belong to $u\ZZ[u^2]$.
\een
%In particular, if $d$ is the degree of $\wt h_{x,y;z}$ as a polynomial in $v$, then $v^d \wt h_{x,y;z} \in \ZZ[q]$ is a palindromic polynomial in $q$.
\end{proposition}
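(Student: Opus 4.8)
The plan is to mirror the structure of the proof of Proposition~\ref{h-vv}, pushing the parity bookkeeping through the definitions of $\wt h_{x,y;z}$, $h^\sigma_{x,y;z}$, and $h^\pm_{x,y;z}$. First I would dispose of $\wt h_{x,y;z}$: by definition $\wt h_{x,y;z} = \sum_{z' \in W} h_{x,y;z'} h_{z',(x^*)^{-1},z}$ from \eqref{tohtilde}. Fix $x,y,z$ and note that $\ell(x^*) = \ell(x)$ since $*$ preserves $S$. For each $z'$, the sum of the two relevant length-parity quantities is
\[
\bigl(\ell(x)+\ell(y)+\ell(z')\bigr) + \bigl(\ell(z')+\ell(x)+\ell(z)\bigr) \equiv \ell(y) + \ell(z) \pmod 2,
\]
so exactly one of the two factors lies in $u\ZZ[u^2]$ and the other in $\ZZ[u^2]$ when $\ell(y)+\ell(z)$ is odd (the product landing in $u\ZZ[u^2]$), while both factors lie in $\ZZ[u^2]$ or both in $u\ZZ[u^2]$ when $\ell(y)+\ell(z)$ is even (the product landing in $\ZZ[u^2]$ either way, since $u\ZZ[u^2]\cdot u\ZZ[u^2] = u^2\ZZ[u^2]\subseteq \ZZ[u^2]$). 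Applying Proposition~\ref{h-vv} to each factor gives the claim for $\wt h_{x,y;z}$.

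Next I would handle $h^\sigma_{x,y;z}$ by induction on $\ell(x)$, using the recurrence \eqref{hsig-reccc}. The base cases $\ell(x)=0$ (where $h^\sigma_{1,y;z}=\delta_{y,z}$, which forces $\ell(y)=\ell(z)$ so $\ell(y)+\ell(z)$ is even and $\delta_{y,z}\in\ZZ[u^2]$) and $\ell(x)=1$ (where Theorem~\ref{mult-thm} gives $h^\sigma_{s,y;z}$ explicitly) must be checked against the parity statement. For $\ell(s) = 1$, Theorem~\ref{mult-thm} shows $C_s A_w$ is a combination of $A_w$ (coefficient in $\ZZ[u^2]$, since $q+q^{-1}=u^2-2$) and $A_y$ with $y<w$ and coefficient $\msig(y\xrightarrow{s}w)$, which equals $\mu^\sigma(y,w)u$ when $\ell(w)-\ell(y)$ is odd and $\mu^\sigma(y,w;s)\in\ZZ$ when $\ell(w)-\ell(y)$ is even, plus the $v+v^{-1}=u$ term in the second case; a quick parity check confirms the coefficient of $A_y$ in $C_s A_w$ lies in $u\ZZ[u^2]$ iff $\ell(y)+\ell(w)$ is odd. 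For the inductive step, I would choose $s\in\Des(x)$ and apply \eqref{hsig-reccc}: in the first sum $\sum_{z'} h^\sigma_{sx,y;z'} h^\sigma_{s,z';z}$, the parity identity $(\ell(sx)+\ell(y)+\ell(z')) + (\ell(s)+\ell(z')+\ell(z)) \equiv \ell(x)+\ell(y)+\ell(z)+1 \equiv \ell(y)+\ell(z)$ (using $\ell(sx)=\ell(x)-1$) together with the inductive hypothesis on $h^\sigma_{sx,y;\cdot}$ and the $\ell(x)=1$ case on $h^\sigma_{s,\cdot;z}$ controls each summand; in the second sum, $\mu(x',sx)$ is nonzero only when $\ell(x)-\ell(x')$ is even, so $h^\sigma_{x',y;z}$ has the same relevant parity as $h^\sigma_{x,y;z}$, and the inductive hypothesis applies. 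This yields the claim for $h^\sigma_{x,y;z}$.

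Finally, $h^\pm_{x,y;z} = \tfrac12(\wt h_{x,y;z} \pm h^\sigma_{x,y;z})$ by \eqref{hpm-def}, and since both $\wt h_{x,y;z}$ and $h^\sigma_{x,y;z}$ have been shown to lie in $\ZZ[u^2]$ (resp.\ $u\ZZ[u^2]$) when $\ell(y)+\ell(z)$ is even (resp.\ odd), their half-sum and half-difference lie in $\QQ[u^2]$ (resp.\ $u\QQ[u^2]$); combined with the fact that $h^\pm_{x,y;z}\in\cA$ has integer coefficients (cited from \cite[Proposition 2.11]{EM1}), we get $h^\pm_{x,y;z}\in\ZZ[u^2]$ or $u\ZZ[u^2]$ as appropriate. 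The main obstacle I anticipate is the careful parity accounting in the inductive step for $h^\sigma$: one must correctly track that passing from $x$ to $sx$ flips $\ell(x)$ by one, that this is compensated by the extra factor of $h^\sigma_{s,z';z}$ (whose index $s$ has odd length), and that the intermediate index $z'$ cancels out of the parity; and one must verify the $\ell(x)\le 1$ base cases are genuinely consistent with the asserted dichotomy rather than merely plausible. None of this is deep, but it is the step where a sign or parity slip would go unnoticed.
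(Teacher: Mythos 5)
Your proposal takes the same route as the paper's proof: the factorization \eqref{tohtilde} together with Proposition \ref{h-vv} for $\wt h_{x,y;z}$, induction on $\ell(x)$ via Theorem \ref{mult-thm} and the recurrence \eqref{hsig-reccc} for $h^\sigma_{x,y;z}$, and the integrality result \cite[Proposition 2.11]{EM1} combined with $\cA\cap\QQ[u^2]\subset\ZZ[u^2]$ and $\cA\cap u\QQ[u^2]\subset u\ZZ[u^2]$ for $h^\pm_{x,y;z}$; the base cases and the $\wt h$ and $h^\pm$ steps are handled exactly as in the paper.

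One bookkeeping slip in the inductive step for $h^\sigma$ should be repaired, though it does not affect the viability of the argument. The displayed congruence $(\ell(sx)+\ell(y)+\ell(z')) + (\ell(s)+\ell(z')+\ell(z)) \equiv \ell(x)+\ell(y)+\ell(z)+1 \equiv \ell(y)+\ell(z)$ is false as written: the left-hand side equals $\ell(x)+\ell(y)+\ell(z)+2\ell(z')\equiv\ell(x)+\ell(y)+\ell(z)$, and the final step would need $\ell(x)$ odd. More to the point, this three-length quantity is the bookkeeping appropriate to Proposition \ref{h-vv}, not to the statement you are proving: the parity criterion for $h^\sigma_{x,y;z}$ involves only $\ell(y)+\ell(z)$ and is independent of the first index. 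So the inductive step is simpler than you make it: the hypothesis places $h^\sigma_{sx,y;z'}$ in the class determined by $\ell(y)+\ell(z')$ and the $\ell(x)=1$ case places $h^\sigma_{s,z';z}$ in the class determined by $\ell(z')+\ell(z)$, and $(\ell(y)+\ell(z'))+(\ell(z')+\ell(z))\equiv\ell(y)+\ell(z)$ gives exactly the required class for each product; likewise, in the $\mu(x',sx)$ sum the evenness of $\ell(x)-\ell(x')$ is irrelevant, since $h^\sigma_{x',y;z}$ already lies in the class determined by $\ell(y)+\ell(z)$ by induction. With these corrections your argument coincides with the paper's.
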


%\begin{remark}
%The comments after Proposition \ref{h-vv} apply likewise to the polynomials $\wt h_{x,y;z}$; in particular, it follows that  if $d$ is the degree of $\wt h_{x,y;z}$ as a polynomial in $v$, then $v^d \wt h_{x,y;z} \in \ZZ[q]$ is a palindromic polynomial in $q$.
%\end{remark}

\begin{proof}
Since $\ell((x^*)^{-1}) = \ell(x)$, 
 the parities of $\ell(x)+\ell(y)+\ell(z')$ and $\ell(z') + \ell((x^*)^{-1})+\ell(z)$ are either always  equal or always distinct  for  $z' \in W$, according to whether $\ell(y) + \ell(z)$ is even or odd respectively. Since $\wt h_{x,y;z} = \sum_{z' \in W} h_{x,y;z'}h_{z',(x^*)^{-1};z}$, it follows from Proposition \ref{h-vv}  that $\wt h_{x,y;z}$ belongs to $\ZZ[u^2]$ if $\ell(y)+\ell(z)$ is even and to $u\ZZ[u^2]$ otherwise.
 
We next establish the claim that $h^\sigma_{x,y;z}$ belongs to $\ZZ[u^2]$ or $u\ZZ[u^2]$ according to whether $\ell(y) + \ell(z)$ is even or odd. The proof of this fact is similar to that of Proposition \ref{h-vv}.
Since $h^\sigma_{1,y;z} =\delta_{y,z}$ our claim holds if $\ell(x) = 0$. Since $\msig(z\xrightarrow{s}y)$ belongs to $\ZZ[u^2]$ if $\ell(y) + \ell(z)$ is even and to $u\ZZ[u^2]$ otherwise (see   \eqref{msig-def}), Theorem \ref{mult-thm} shows that our claim also holds when $\ell(x) \leq 1$. 
Finally, when $\ell(x) \geq 2$ and $s \in \Des(x)$,
our claim follows by induction using \eqref{hsig-reccc} exactly as in the proof of Proposition \ref{h-vv}.
% we have $C_x = C_s C_{sx} - \sum_{x' \in W;\hs sx'<x'<sx} \mu(x',sx)C_{x'}$ by  Theorem \ref{kl-thm}  so
%  \be\label{hsig-rec}h^\sigma_{x,y;z}= \sum_{z'\in \I} h^\sigma_{sx,y;z'}h^\sigma_{s,z';z}  - \sum_{\substack{x' \in W \\ sx'<x'<x}} \mu(x',sx) h^\sigma_{x',y;z}.\ee
%In this case our claim follows by induction exactly as in the proof of Proposition \ref{h-vv}.

Combining the preceding paragraphs demonstrates that 
the polynomials $h^\pm_{x,y;z}$, which automatically belong to $\cA = \ZZ[v,v^{-1}]$ by \cite[Proposition 2.11]{EM1}, also belong to   $\QQ[u^2] $ or $u\QQ[u^2]$ according to whether $\ell(y) + \ell(z)$ is even or odd. It is straightforward to check that 
$\cA \cap \QQ[u^2] \subset \ZZ[u^2]$ and $\cA \cap u\QQ[u^2] \subset u \ZZ[u^2]$, which establishes the proposition in full.
 \end{proof}

All elements of $\ZZ[u^2]$ have the form $a_0 + a_2(v^2+v^{-2}) + \dots + a_d (v^d + v^{-d})$  while all elements of $u\ZZ[u^2]$ have the form $a_1(v+v^{-1}) + a_3(v^3+v^{-3}) + \dots + a_d (v^d+v^{-d})$ for some integers $a_i \in \ZZ$. From this observation and the preceding propositions derives the following corollary.

\begin{corollary}\label{vv-cor} The Laurent polynomials $h_{x,y;z}$, $\wt h_{x,y;z}$, $h^\sigma_{x,y;z}$, $h^\pm_{x,y;z} \in \cA$ are always balanced. % in the sense of Section \ref{unimodal-sect}.
%; i.e., they become symmetric polynomials in $q$ when multiplied by $v$ to the power of their degrees.
\end{corollary}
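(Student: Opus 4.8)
The plan is to deduce this directly from Propositions~\ref{h-vv} and~\ref{rest-vv} together with the concrete description of $\ZZ[u^2]$ and $u\ZZ[u^2]$ recorded immediately above, where $u = v + v^{-1}$. Those two propositions already tell us that each of $h_{x,y;z}$, $\wt h_{x,y;z}$, $h^\sigma_{x,y;z}$, and $h^\pm_{x,y;z}$ lies in $\ZZ[u^2]$ or in $u\ZZ[u^2]$, the latter occurring according to whether the relevant length sum is even or odd. Thus the task reduces to the purely formal assertion that every nonzero element of $\ZZ[u^2] \cup u\ZZ[u^2]$ is balanced, the zero polynomial being balanced by convention since we declare $0$ to have degree $0$.

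To verify this, I would take $f = \sum_i a_i(v^i + v^{-i})$ to be a nonzero Laurent polynomial in which the index $i$ ranges over the nonnegative integers of one fixed parity, all $a_i \in \ZZ$, and where the term with $i = 0$ (if present) is understood as the constant $a_0$ rather than $2a_0$. Let $d$ be the largest index with $a_d \neq 0$; then $d$ is the degree of $f$ in $v$, and $d$ has the same parity as every index $i$ occurring in the sum. Hence $d + i$ and $d - i$ are both even, so $v^d(v^i + v^{-i}) = q^{(d+i)/2} + q^{(d-i)/2}$, and $v^d f$ is therefore a polynomial in $q$. Reading off its coefficients, the coefficient of $q^k$ equals $a_{|2k-d|}$, with the convention that $a_j = 0$ whenever $j > d$ or $j$ has the wrong parity; since $|2k-d| = |2(d-k)-d|$, this is symmetric under $k \mapsto d-k$. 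Thus $f$ is balanced, and applying this to each of the four families yields the corollary.

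I do not expect a real obstacle here: all of the substance is already contained in Propositions~\ref{h-vv} and~\ref{rest-vv}, whose proofs carry out the necessary inductions on $\ell(x)$ via the recurrences \eqref{h-rec} and \eqref{hsig-reccc}. Granting those, the present statement is an exercise in tracking parities, the only points requiring a moment's care being the interpretation of the constant term and the degenerate case $f = 0$.
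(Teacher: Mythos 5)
Your proposal is correct and follows essentially the same route as the paper: the paper likewise derives the corollary from Propositions \ref{h-vv} and \ref{rest-vv} together with the observation that elements of $\ZZ[u^2]$ and $u\ZZ[u^2]$ are integer combinations of the terms $v^i+v^{-i}$ with $i$ of a fixed parity, leaving the symmetry bookkeeping implicit. Your explicit verification that the coefficient of $q^k$ in $v^df$ is $a_{|2k-d|}$, hence symmetric under $k\mapsto d-k$, is exactly the elementary check the paper omits.
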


\subsection{Reductions}

Propositions \ref{reduction-prop} and \ref{reductionprime-prop} in this section together imply  Theorem \ref{x-thm} in the introduction.
Before proceeding to these results we require two additional lemmas.

\begin{lemma}\label{recovery-thm2}
Suppose that $(W,S)$, $(W',S')$, and $*$ are defined as in Proposition \ref{mod-prop},
and let X be one of the letters A, B, C, or D. 
 If Property X holds for $(W',S')$, then Property X$'$ holds for the triple $(W,S,*)$.
% The following   implications then hold:
%\ben
%\item[(i)] If Property X holds for $(W',S')$, then Property X$'$ holds for the triple $(W,S,*)$.
%
%\item[(ii)] Assume X is B or D. If Property X$'$ holds for the triple $(W,S,*)$, then Property X$''$ holds for $(W',S')$.
%
%\een
\end{lemma}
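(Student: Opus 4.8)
The plan is to exploit the explicit formulas from Proposition \ref{recovery-thm}, which express each of the twisted quantities $P^\pm_{y,w}$, $P^\pm_{y,w}-P^\pm_{z,w}$, and $h^\pm_{x,y;z}$ attached to $(W,S,*)$ in terms of ordinary Kazhdan-Lusztig data of $(W',S')$, together with the elementary closure properties of nonnegativity and balanced unimodality proved in Lemmas \ref{bal-lem} and \ref{myown-lem}. The four cases $\textrm{X} \in \{\textrm A,\textrm B,\textrm C,\textrm D\}$ are handled separately, but all follow the same pattern: take the relevant formula from Proposition \ref{recovery-thm}, write the quantity in the form $f^+$ or $f^-$ (or a difference of two such) for an appropriate $f \in \cA$ built from Kazhdan-Lusztig polynomials or structure constants of $(W',S')$, check that $f$ has the positivity property granted by Property X for $(W',S')$, and then invoke Lemma \ref{myown-lem}.

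First I would dispose of Properties \descref{A$'$} and \descref{C$'$}. For \descref{A$'$}, Proposition \ref{recovery-thm}(a) gives $P^\pm_{y,w} = \tfrac12(P_{y,w}(q)^2 \pm P_{y,w}(q^2))$ for the corresponding $y,w \in W'$; since Property \descref{A} for $(W',S')$ says $P_{y,w} \in \NN[q]$, setting $f = P_{y,w}$ and applying Lemma \ref{myown-lem}(b) shows $P^\pm_{y,w} = f^\pm \in \NN[q]$. For \descref{C$'$}, Proposition \ref{recovery-thm}(c) gives $h^\pm_{x,y;z} = \tfrac12(f_{w,x,y;z}(v)^2 \pm f_{w,x,y;z}(v^2))$ where $f_{w,x,y;z} = \sum_{g \in W'} h_{w,x;g}h_{g,y;z}$; by the remark following Proposition \ref{recovery-thm} this is the structure constant for $c_wc_xc_y$, so Property \descref{C} for $(W',S')$ (nonnegativity of all $h_{a,b;c}$) gives that each $f_{w,x,y;z}$ is a nonnegative combination of products of nonnegative Laurent polynomials, hence has nonnegative coefficients; Lemma \ref{myown-lem}(b) again yields $h^\pm_{x,y;z} \in \NN[v,v^{-1}]$.

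Next I would treat \descref{B$'$} and \descref{D$'$}. For \descref{B$'$}, Proposition \ref{recovery-thm}(b) expresses $P^\pm_{y,w}-P^\pm_{z,w}$ as $\tfrac12(P_{y,w}(q)^2 - P_{z,w}(q)^2) \pm \tfrac12(P_{y,w}(q^2)-P_{z,w}(q^2))$ for $y\le z$ in $W'$. Writing $g = P_{y,w} - P_{z,w}$ and $s = P_{y,w} + P_{z,w}$, one has $P_{y,w}^2 - P_{z,w}^2 = gs$ and $P_{y,w}(q^2) - P_{z,w}(q^2) = g(q^2)$, so the quantity is $\tfrac12(g(v)s(v) \pm g(v^2))$; Property \descref{B} for $(W',S')$ gives $g \in \NN[q]$ and Property \descref{A} gives $s \in \NN[q]$, and a short direct coefficient computation (in the style of Lemma \ref{myown-lem}(a),(b), which I would include or cite as the obvious analogue) shows $\tfrac12(gs \pm g(q^2)) \in \NN[q]$. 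For \descref{D$'$}, by Corollary \ref{vv-cor} the Laurent polynomials $h^\pm_{x,y;z}$ are balanced, so Property \descref{D$'$} is exactly the assertion that they are balanced unimodal; Property \descref{D} for $(W',S')$ together with Corollary \ref{vv-cor} says each $h_{a,b;c}$ is balanced unimodal, Lemma \ref{bal-lem} then shows each product $h_{w,x;g}h_{g,y;z}$ is balanced unimodal and (since summands of a balanced-unimodal sum of matching parity stay balanced unimodal — here all summands are products of two structure constants whose total length-parity is controlled by Proposition \ref{rest-vv}) that $f_{w,x,y;z}$ is balanced unimodal, and finally Lemma \ref{myown-lem}(c) gives that $h^\pm_{x,y;z} = f_{w,x,y;z}^\pm$ is balanced unimodal.

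The main obstacle is the parity bookkeeping required so that Lemma \ref{bal-lem} applies when summing over $g$ in $f_{w,x,y;z} = \sum_g h_{w,x;g}h_{g,y;z}$: Lemma \ref{bal-lem} only guarantees that a \emph{sum} of balanced unimodal Laurent polynomials stays balanced unimodal when all the summands have the same parity of degree in $v$. Here I would argue, using Proposition \ref{h-vv}, that $h_{w,x;g}$ lies in $u\ZZ[u^2]$ or $\ZZ[u^2]$ according to the parity of $\ell(w)+\ell(x)+\ell(g)$, and similarly for $h_{g,y;z}$; hence the product $h_{w,x;g}h_{g,y;z}$ has its $v$-degree parity determined by $\ell(w)+\ell(x)+2\ell(g)+\ell(y)+\ell(z) \equiv \ell(w)+\ell(x)+\ell(y)+\ell(z) \pmod 2$, independently of $g$. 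So all nonzero summands have the same parity and Lemma \ref{bal-lem} applies to the whole sum. The same parity argument (already recorded in Proposition \ref{rest-vv}) guarantees that in \descref{D$'$} the formula $h^\pm_{x,y;z} = f^\pm_{w,x,y;z}$ from Proposition \ref{recovery-thm}(c) feeds a genuinely balanced $f$ into Lemma \ref{myown-lem}(c). Everything else is the routine coefficient chase hidden inside Lemmas \ref{bal-lem} and \ref{myown-lem}, which I would not reproduce.
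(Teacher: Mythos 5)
Your proposal is correct and follows essentially the same route as the paper: Proposition \ref{recovery-thm} to translate each twisted quantity into an expression $f^\pm$ built from Kazhdan--Lusztig data of $(W',S')$, then Lemma \ref{myown-lem} for Properties A$'$, B$'$, C$'$ and Lemma \ref{bal-lem} together with the parity argument from Proposition \ref{h-vv} for D$'$. One small caveat: in the B$'$ step the hypotheses $g,s\in\NN[q]$ alone do not give $\tfrac{1}{2}(gs - g(q^2))\in\NN[q]$; you need the decomposition $gs = g^2 + 2gP_{z,w}$ (exactly the paper's $(f-g)^2 \pm (f(q^2)-g(q^2)) + 2g(f-g)$), which is available here and makes your ``short coefficient computation'' go through.
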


\begin{proof}
%We first prove part (i). 
We know that Property \descref{A} holds for $(W',S')$, and 
it follows that Property \descref{A$'$}  holds for $(W,S,*)$ from Proposition \ref{recovery-thm}(a) and  Lemma \ref{myown-lem}(b).
Suppose Property \descref{B} holds for $(W',S')$. Fix  $y,z,w \in W'$ with $y \leq z$ and let $f=P_{y,w}$ and $g=P_{z,w}$. Then $f-g \in \NN[q]$ and also $f,g \in \NN[q]$, since Property \descref{B} implies Property \descref{A}, and so 
\[ (f(q)^2 -g(q)^2) \pm (f(q^2) - g(q^2)) = \underbrace{(f-g)^2 \pm (f(q^2)- g(q^2))}_{\in \NN[q]\text{ by Lemma \ref{myown-lem}(b)}} + \underbrace{2g(f-g)}_{ \in \NN[q]} \in \NN[q]. \]
Property \descref{B$'$} therefore holds for $(W,S,*)$ by Proposition \ref{recovery-thm}(b).

For the remainder of the proof,  fix arbitrary elements $w,x,y,z \in W'$ and write $f=f_{w,x,y;z}$ as in Proposition \ref{recovery-thm}(c). Then %$f_{w,x,y;z} = \sum_{g\in W'} h_{w,x;g} h_{g,y;z}$ for $w,x,y,z \in W'$ and that 
Properties \descref{C$'$} and \descref{D$'$} are  respectively equivalent to the assertions that the polynomials $f^\pm \omdef= \tfrac{1}{2} \( f(v)^2 \pm f(v^2)\)$
%\[ \tfrac{1}{2}\( f_{w,x,y;z}(v)^2 \pm f_{w,x,y;z}(v^2)\)\]
always have nonnegative coefficients and always are balanced unimodal.
%
%Let $w,x,y,z \in W'$ be arbitrary elements and write $f=f_{w,x,y;z}$. 
Since Property \descref{C} always holds  for $(W',S')$ we have $f \in \NN[v,v^{-1}]$ so   $f^\pm \in \NN[v,v^{-1}]$ by Lemma \ref{myown-lem}(b). 

Suppose     Property \descref{D} holds for $(W',S')$. The structure constants $h_{x,y;z}$ are then always balanced unimodal, and so by Lemma \ref{bal-lem}(a)  the product $h_{w,x;g} h_{g,y;z}$  for  each $g \in W'$ is likewise balanced unimodal.  Let $u = v+v^{-1}$. For all $w \in W'$, it holds by Proposition \ref{h-vv} that $h_{w,x;g} h_{g,y;z}$ belongs to $u\ZZ[u^2]$ or $\ZZ[u^2]$ according to whether  $\ell(w) + \ell(x)$ and $\ell(y) + \ell(z)$ have distinct or equal parities. Thus the degrees of the products $h_{w,x;g} h_{g,y;z}$ for $g \in W'$ all have the same parity, so $f$, being equal to sum of such products, is balanced unimodal by Lemma \ref{bal-lem}(b).
By Lemma \ref{myown-lem}(c) it follows that the polynomials  $f^\pm$ are therefore balanced unimodal, so Property \descref{D$'$} holds for $(W,S,*)$.
%
%The proof of part (ii) is simpler. For all $y,w \in W'$ we have
%$P^+_{(y,y^{-1}),(w,w^{-1})} + P^-_{(y,y^{-1}),(w,w^{-1})} %= P_{(y,y^{-1}),(w,w^{-1})} = P_{y,w} P_{y^{-1},w^{-1}} 
%=
% (P_{y,w})^2$, by the combination of Corollary \ref{kl-cor}(a), Lemma \ref{lastbit-lem}(a), and the definition of $P^\pm_{y,w}$.
% This suffices to establish part (ii) when X is A or B. In turn, the proof of Theorem \ref{recovery-thm} shows that 
% \[ \left\{ h^+_{x,y;z} + h^-_{x,y;z} : x,y,z \in W \right\} =\left \{ (f_{w,x,y;z})^2 : w,x,y,z \in W' \right\}\]
%which  proves part (ii) when X is  C or D, in light of   Lemma \ref{bal-lem}(b) and Proposition \ref{rest-vv}.
\end{proof}

In the next statement and for the duration of this section, we fix 
  an arbitrary Coxeter system  $(W,S)$ with an $S$-preserving involution $* \in \Aut(W)$, and we let $S'\subset S$ and $S'' = S\setminus S'$ be (possibly empty) sets of simple generators such that 
  \ben
  \item[(i)] $S'$ and $S''$ are each preserved by $*$;  
\item[(ii)]  Every $s' \in S'$ commutes with every $s'' \in S'' $. 
\een
We write
$W' = \langle S' \rangle $ and $ W'' = \langle S'' \rangle$
%\[W' = \langle S' \rangle \qquand W'' = \langle S'' \rangle \]
 for the  subgroups  generated by $S'$ and $S''$, and let $\I' =W' \cap \I$ and $\I'' =W''\cap\I$.
 
\begin{lemma}\label{split-lem} 
For each $w \in W$ there are unique elements in $W'$ and $W''$, which we denote  $w'$ and $w''$ respectively, such that $w=w'w'' = w''w'$. This decomposition has the following properties:  \ben
%\item[(a)] For each $w \in W$ there are unique elements in $W'$ and $W''$, which we denote  $w'$ and $w''$ respectively, such that $w=w'w'' = w''w'$. Moreover, $w \in \I$ if and only if $w' \in \I'$ and $ w'' \in \I''$, and if $y \in W$ then $y\leq w$ if and only if $y' \leq w'$ and $ y'' \leq w''$.

\item[(a)] If $w \in W$ then $w \in \I$ if and only if $w' \in \I'$ and $ w'' \in \I''.$
%$, and if $y \in W$ then $y\leq w$ if and only if $y' \leq w'$ and $ y'' \leq w''$.

\item[(b)] For all $w,x,y,z \in W$ we have 
$P_{y,w}  = P_{y',w'} P_{y'',w''}$ and $ h_{x,y;z} = h_{x',y';z'} h_{x'',y'';z''}.$

\item[(c)] For all $x \in W$ and $w,y,z \in \I$ we have 
\[ \Psig_{y,w}  = \Psig_{y',w'} \Psig_{y'',w''}\quand  \wt h_{x,y;z} = \wt h_{x',y';z'} \wt h_{x'',y'';z''} \quand h^\sigma_{x,y;z} = h^\sigma_{x',y';z'} h^\sigma_{x'',y'';z''}.\]

\een
\end{lemma}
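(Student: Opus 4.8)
\textbf{Proof proposal for Lemma~\ref{split-lem}.}

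The plan is to treat everything as a consequence of the direct-product decomposition $(W,S) = (W',S') \times (W'',S'')$ forced by hypotheses (i) and (ii), combined with the uniqueness clauses in the three Theorem-Definitions. First I would establish the initial claim: since every $s'\in S'$ commutes with every $s''\in S''$, the subgroup $\langle S'\cup S''\rangle$ is the internal direct product $W'\times W''$, so every $w\in W$ factors uniquely as $w=w'w''=w''w'$ with $w'\in W'$ and $w''\in W''$. Standard facts about parabolic/direct-product decompositions of Coxeter groups give $\ell(w)=\ell(w')+\ell(w'')$ and, more importantly, that the Bruhat order on $W$ is the product order: $y\le w$ if and only if $y'\le w'$ and $y''\le w''$. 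I would cite \cite{Hu} or the preliminaries for these. Part~(a) is then immediate from uniqueness: $w^*=w^{-1}$ holds if and only if $(w')^*(w'')^* = (w')^{-1}(w'')^{-1}$, and since $*$ preserves $S'$ and $S''$ (hypothesis (i)) this splits into $(w')^*=(w')^{-1}$ and $(w'')^*=(w'')^{-1}$, i.e.\ $w'\in\I'$ and $w''\in\I''$.

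For part~(b), the factorization $P_{y,w}=P_{y',w'}P_{y'',w''}$ is precisely the content of Lemma~\ref{lastbit-lem}(a) in the special case but holds in general for direct products; I would either invoke that or give the one-line argument: the map $T_w\mapsto T_{w'}\otimes T_{w''}$ (or $t_w\mapsto t_{w'}\otimes t_{w''}$) is an $\cA$-algebra isomorphism $\cH_q \xrightarrow{\sim} \cH_q'\otimes_\cA \cH_q''$ intertwining the bar operators, and it carries $c_{w'}\otimes c_{w''}$ to an element fixed by the bar operator whose expansion in the $t$-basis has the correct support and degree bounds, so by the uniqueness in Theorem-Definition~\ref{kl-thmdef} this element equals $c_w$; comparing coefficients of $t_y=t_{y'}\otimes t_{y''}$ gives $P_{y,w}=P_{y',w'}P_{y'',w''}$. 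The identity for $h_{x,y;z}$ then follows by expanding $c_xc_y = (c_{x'}c_{y'})\otimes(c_{x''}c_{y''})$ in the Kazhdan-Lusztig basis of the tensor product and matching coefficients, exactly as in the proof of Lemma~\ref{lastbit-lem}(b).

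Part~(c) is the analogous statement one level up, using the module $\cM_{q^2}$. The key structural observation is that the twisted-involution module for the direct product decomposes as a tensor product: since $\I = \I'\times\I''$ by part~(a), and since for $s\in S'$ the action of $T_s$ on $a_w$ depends only on $w'$ (and likewise $s\in S''$ on $w''$)—because $s\act w$ decomposes compatibly and $\Des(w)\cap S' = \Des(w')$—one gets an $\cA$-module isomorphism $\cM_{q^2}\xrightarrow{\sim}\cM_{q^2}'\otimes_\cA \cM_{q^2}''$, $a_w\mapsto a_{w'}\otimes a_{w''}$, which is $\cH_{q^2}\cong\cH_{q^2}'\otimes\cH_{q^2}''$-equivariant and intertwines the bar operators of Theorem-Definition~\ref{lv-thmdef2} (one checks this on $a_1$ and uses the explicit formula $\overline{a_w}=(-1)^{\ell(w)}(T_{w^{-1}})^{-1}a_{w^{-1}}$ together with $\ell(w)=\ell(w')+\ell(w'')$). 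Then $A_{w'}\otimes A_{w''}$ is bar-invariant, supported on $\{y : y'\le w', y''\le w''\} = \{y\le w\}$, and has the right degree bound since $\ell(w)-\ell(y) = (\ell(w')-\ell(y')) + (\ell(w'')-\ell(y''))$; uniqueness in Theorem-Definition~\ref{lv-thmdef3} forces it to equal $A_w$, giving $\Psig_{y,w}=\Psig_{y',w'}\Psig_{y'',w''}$. The factorizations of $\wt h_{x,y;z}$ and $h^\sigma_{x,y;z}$ follow by expanding $C_x A_y$ and $c_xc_yc_{(x^*)^{-1}}$ through the tensor decompositions—note $(x^*)^{-1}$ decomposes as $((x')^*)^{-1}((x'')^*)^{-1}$ because $*$ preserves $S'$ and $S''$—and matching coefficients, using part~(b) for the $\wt h$ case via \eqref{tohtilde}.

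The main obstacle is verifying cleanly that $\cM_{q^2}$ really does split as the tensor product $\cM_{q^2}'\otimes\cM_{q^2}''$ compatibly with the module structure and the bar operator; this requires checking that the Lusztig-Vogan action formula in Theorem-Definition~\ref{lv-thmdef1} is ``local'' in the two factors (the four cases $s\act w = sws^*\gtrless w$ and $s\act w = sw\gtrless w$ for $s\in S'$ are determined entirely by $w'$, since $s$ commutes with $W''$), and that the bar operator respects the tensor decomposition—which is where the explicit formula $\overline{a_w}=(-1)^{\ell(w)}(T_{w^{-1}})^{-1}a_{w^{-1}}$ and additivity of length do the work. Everything after that is a routine uniqueness-plus-coefficient-comparison argument of the kind already carried out in Lemma~\ref{lastbit-lem}.
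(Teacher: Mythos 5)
Your proposal is correct and follows essentially the same route as the paper: the paper's own proof simply notes that the decomposition and part (a) follow from basic group theory and the Bruhat order, and that parts (b) and (c) follow from $t_w = t_{w'}t_{w''}$, $T_w = T_{w'}T_{w''}$ together with the uniqueness in Theorem-Definitions \ref{kl-thmdef} and \ref{lv-thmdef3}. Your tensor-product formulation of $\cH_q$, $\cH_{q^2}$, and $\cM_{q^2}$ is just a more detailed spelling-out of the same uniqueness-plus-coefficient-comparison argument.
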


\begin{remark}
In part (b), we  identify  $P_{y',w'}$ and $P_{y'',w''}$ with Kazhdan-Lusztig polynomials of the Coxeter systems $(W',S')$ and $(W'',S'')$. Similar identifications apply to the structure constants $h_{x',y';z'}$ and $h_{x'',y'';z''}$. 
In part (c), likewise, we  identify $\Psig_{y',w'}$, $\wt h_{x',y';z'}$, $h^\sigma_{x',y';z'}$ and $\Psig_{y'',w''}$, $\wt h_{x'',y'';z''}$, $h^\sigma_{x'',y'';z''}$ with polynomials attached to the triples $(W',S',*)$ and $(W'',S'',*)$. Note that this makes sense since $*$ restricts to an involution of $W'$ and of $W''$ which preserves $S'$ and $S''$.
\end{remark}

\begin{proof}
The first assertion and part (a)  follow from basic group theory and properties of the Bruhat order (see \cite[Exercise 2.3]{CCG}). 
Since in the Hecke algebras $\cH_q$ and $\cH_{q^2}$ we have $t_w = t_{w'}t_{w''}$ and $T_w = T_{w'}T_{w''}$ for all $w \in W$, parts (b) and (c) follow as consequences of the uniqueness specified in Theorem-Definitions \ref{kl-thmdef} and \ref{lv-thmdef3}.
\end{proof}

%
%\begin{lemma} \label{split-lem2}
%In the notation of Lemma \ref{split-lem}, for all $x \in W$ and $y,z,w \in \I$ the following holds:
%\ben
%\item[(a)] $P^+_{y,w} = P^+_{y',w'} P^+_{y'',w''} + P^-_{y',w'} P^-_{y',w'}$ and $P^-_{y,w} = P^+_{y',w'} P^-_{y'',w''} + P^-_{y',w'} P^+_{y',w'}$. 
%
%\item[(b)] $P^\pm_{y,w} - P^\pm_{z,w} =\tfrac{1}{2} (P^\pm_{y',w'} + P^\pm_{z',w'})(P^\pm_{y'',w''} - P^\pm_{z'',w''}) +\tfrac{1}{2} (P^\pm_{y',w'} - P^\pm_{z',w'})(P^\pm_{y'',w''} + P^\pm_{z'',w''})$.
%
%\item[(c)] $h^+_{x,y;z} = h^+_{x',y';z'} h^+_{x'',y'';z''} + h^-_{x',y';z'} h^-_{x'',y'';z''} $ and $  h^-_{x,y;z} = h^+_{x',y';z'} h^-_{x'',y'';z''} + h^-_{x',y';z'} h^+_{x'',y'';z''}$.
%\een
%\end{lemma}

%\begin{lemma} If $f=f(x) \in \ZZ[x]$ is symmetric and unimodal, then the polynomials $f(x)^2 \pm f(x^2)$ are both symmetric and unimodal.
%\end{lemma}
%
%\begin{proof}
%
%\end{proof}

The following result is presumably well-known to experts, but we could not locate a reference in the literature.

%For the duration of this section,  $(W,S)$ denotes an arbitrary Coxeter system with an $S$-preserving involution $* \in \Aut(W)$, and we let $S' \subset S$ and $S'' = S\setminus S'$ be (possibly empty) subsets satisfying the hypotheses in Lemma \ref{split-lem}. We write $W' = \langle S'\rangle$ and $W'' = \langle S''\rangle$ for the subgroups these sets generate.

\begin{proposition}\label{reduction-prop} 
Suppose Property \descref{B} (respectively, \descref{D}) holds for all irreducible factors of  a  Coxeter system $(W,S)$. Then Property \descref{B} (respectively, \descref{D}) holds for $(W,S)$.
\end{proposition}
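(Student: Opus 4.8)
The statement factors the positivity properties over a direct-product decomposition, so the natural approach is induction on the number of irreducible factors using Lemma~\ref{split-lem} to split a single commuting factor off at a time. Concretely, write $(W,S)$ with the set $S$ partitioned as $S = S' \sqcup S''$, where $S'$ is (the generating set of) a single irreducible factor and $S''$ generates the product of the remaining factors, with every element of $S'$ commuting with every element of $S''$. Lemma~\ref{split-lem}(b) gives the multiplicativity $P_{y,w} = P_{y',w'}\, P_{y'',w''}$ and $h_{x,y;z} = h_{x',y';z'}\, h_{x'',y'';z''}$, where primed and double-primed quantities are the Kazhdan--Lusztig data of $(W',S')$ and $(W'',S'')$ respectively. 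By the inductive hypothesis, Property~\descref{B} (resp.\ \descref{D}) holds for $(W',S')$ (an irreducible factor) and for $(W'',S'')$ (a product of strictly fewer irreducible factors, each satisfying the relevant property). It remains to check that the property in question is closed under the relevant kind of product.

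\textbf{The case of Property~\descref{B}.} Suppose $y \leq z$ in $W$. The Bruhat order on a direct product is the product order, so $y' \leq z'$ in $W'$ and $y'' \leq z''$ in $W''$. Then
\[
P_{y,w} - P_{z,w} = P_{y',w'}P_{y'',w''} - P_{z',w'}P_{z'',w''}
= (P_{y',w'}-P_{z',w'})P_{y'',w''} + P_{z',w'}(P_{y'',w''}-P_{z'',w''}).
\]
Each of the four factors on the right lies in $\NN[q]$: the differences by Property~\descref{B} for the respective factors, and $P_{y'',w''},\, P_{z',w'}$ themselves by Property~\descref{A}, which follows from Property~\descref{B} (or is known unconditionally). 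Hence $P_{y,w}-P_{z,w}\in\NN[q]$, and Property~\descref{B} holds for $(W,S)$. This is essentially the same telescoping identity already used in the proof of Lemma~\ref{recovery-thm2}.

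\textbf{The case of Property~\descref{D}.} Here one must argue that the product $h_{x',y';z'}\, h_{x'',y'';z''}$ is balanced unimodal whenever both factors are. This is exactly Lemma~\ref{bal-lem}(a), provided neither factor is zero; if either factor is zero the product is zero, which is balanced unimodal by convention. By the inductive hypothesis $h_{x',y';z'}$ and $h_{x'',y'';z''}$ are balanced unimodal, so by Lemma~\ref{bal-lem}(a) so is $h_{x,y;z}$, giving Property~\descref{D} for $(W,S)$. The base case of the induction is a Coxeter system that is itself irreducible, where the hypothesis is the conclusion.

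\textbf{Main obstacle.} The only genuinely substantive input is the product-closure of balanced unimodality in the Property~\descref{D} case, and this is cited from \cite{zel} via Lemma~\ref{bal-lem}(a); everything else (multiplicativity of $P$ and $h$ over commuting factors, the product structure of Bruhat order, the telescoping identity for differences) is routine. The mild subtlety to watch is the bookkeeping of the induction: one should make sure the splitting $S=S'\sqcup S''$ into one irreducible factor plus the rest meets conditions (i) and (ii) preceding Lemma~\ref{split-lem} --- condition (ii) holds because distinct irreducible factors commute elementwise, and condition (i) is automatic since each irreducible factor's generating set, being a union of $*$-orbits of connected components of the Coxeter graph, is $*$-stable (an $S$-preserving automorphism permutes connected components).
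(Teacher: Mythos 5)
Your proposal is correct and follows essentially the same route as the paper: induction on the product decomposition via Lemma~\ref{split-lem}, with Lemma~\ref{bal-lem}(a) handling Property~\descref{D}, and for Property~\descref{B} a product-difference identity whose terms lie in $\NN[q]$ (you use the telescoping form $(P_{y',w'}-P_{z',w'})P_{y'',w''} + P_{z',w'}(P_{y'',w''}-P_{z'',w''})$, the paper uses the symmetrized version with halves; both rely on Property~\descref{A} for the factors, which holds unconditionally). The one point you omit is the paper's opening reduction to finite rank (any finite collection of group elements lies in a parabolic subgroup generated by a finite subset of $S$, and the relevant polynomials are unchanged under this restriction); without it, your induction on the number of irreducible factors does not terminate when $S$ is infinite, so you should add that remark to cover arbitrary Coxeter systems.
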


Of course, the corresponding statement for Properties \descref{A} and \descref{C} holds vacuously since these properties  hold for all Coxeter systems by \cite{EWpaper}.

\begin{proof}
Let X stand for one of the letters B or D, and assume Property X holds for all irreducible factors of $(W,S)$.
We may assume without loss of generality that the rank of $(W,S)$ is finite, since any finite set of elements of $ W$ belong to a Coxeter subgroup of $ W$ generated by a finite subset of $S$, and so we can view the polynomials $P_{y,w}$ and $h_{x,y;z}$ as attached to a finite rank Coxeter system. 

We now proceed by induction on the finite rank of $(W,S)$. If $(W,S)$ is irreducible then the proposition holds automatically. If $(W,S)$ is not irreducible, then $S'$ and $S''$ can both be chosen (taking $*$ to be trivial) to be proper subsets of $S$. In this case we may assume by induction that Property X holds for the Coxeter systems $(W',S')$ and $(W'',S'')$, since these both have rank strictly less than that of $(W,S)$.
%If X is A or C then it follows immediately from Lemma \ref{split-lem}(b) that Property X  holds for $(W,S)$.
If X $=$ D then it follows from Lemma \ref{split-lem}(c) combined with  Lemma \ref{bal-lem}(a)  that Property X  holds for $(W,S)$.
 If X $=$ B then Property X holds for $(W,S)$ since in the notation of Lemma \ref{split-lem} we have
\[ P_{y,w} - P_{z,w} =\tfrac{1}{2} (P_{y',w'} + P_{z',w'})(P_{y'',w''} - P_{z'',w''}) +\tfrac{1}{2} (P_{y',w'} - P_{z',w'})(P_{y'',w''} + P_{z'',w''}) 
\]
for all $y,z,w \in W$ with $y\leq z$, and by induction all parenthesized terms on the right hand side of this identity belong to $\NN[q]$. %, as  Property \descref{B} implies Property \descref{A}.
\end{proof}

In our second proposition, recall that when we say that ``Property X$'$ holds for $(W,S)$'' we mean that the property in question holds with respect to the Coxeter system $(W,S)$ for all choices of $S$-preserving involution $* \in \Aut(W)$.

 \begin{proposition} \label{reductionprime-prop}
 Let X stand for one of the letters A, B, C, or D, and let $(W,S)$ be a Coxeter system with an $S$-preserving involution $* \in \Aut(W)$. If Properties X and X$'$ hold for all irreducible factors of $(W,S)$, then Property X$'$ holds for the triple $(W,S,*)$.  
 \end{proposition}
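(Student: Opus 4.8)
The plan is to reduce the triple $(W,S,*)$ to a product of irreducible pieces that are each compatible with $*$, and then handle two kinds of irreducible pieces separately: those fixed by $*$ and those swapped in pairs by $*$. Concretely, since any finite collection of elements lies in a parabolic subgroup generated by a finite subset of $S$, we may assume $(W,S)$ has finite rank and induct on it. If $(W,S)$ is irreducible there is nothing to prove (the hypothesis is exactly the conclusion), so assume it is reducible. The key structural observation is that $*$ permutes the irreducible factors of $(W,S)$; hence we can partition the set of irreducible components into two disjoint $*$-stable families and take $S'$ to be the union of one family (chosen nonempty and proper) and $S'' = S \setminus S'$ its complement. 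Conditions (i) and (ii) preceding Lemma~\ref{split-lem} are then satisfied, so Lemma~\ref{split-lem} applies and splits each of $\Psig_{y,w}$, $\wt h_{x,y;z}$, $h^\sigma_{x,y;z}$ (and their ordinary analogues $P_{y,w}$, $h_{x,y;z}$) as products of the corresponding data for the sub-triples $(W',S',*)$ and $(W'',S'',*)$, each of strictly smaller rank.

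Next I would feed this multiplicativity into the induction. For Property~A$'$ and Property~C$'$: a product of polynomials with nonnegative coefficients has nonnegative coefficients, so once we know (by the inductive hypothesis, using that Properties X and X$'$ hold for the irreducible factors) that the relevant polynomials for $(W',S',*)$ and $(W'',S'',*)$ have nonnegative coefficients, the same holds for $(W,S,*)$. For Property~D$'$ the argument is parallel but uses Lemma~\ref{bal-lem}(a): a product of balanced unimodal Laurent polynomials is balanced unimodal, and by Corollary~\ref{vv-cor} the relevant structure constants are balanced, so the only content is unimodality of the product, which Lemma~\ref{bal-lem}(a) delivers. For Property~B$'$ we use the same ``split a difference of products'' trick as in the proof of Proposition~\ref{reduction-prop}: writing $P^\pm_{y,w} - P^\pm_{z,w}$ via the factorization $P^\pm_{y,w} = (\text{product over } ' \text{ and } '')$ and expanding, each difference becomes a sum of products of (i) a sum of the relevant polynomials, which is nonnegative, and (ii) a difference of the relevant polynomials for $(W',S',*)$ or $(W'',S'',*)$, which is nonnegative by the inductive hypothesis on Property~B$'$. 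The only subtlety is that for $P^\pm$ one must check the decomposition $P^\pm_{y,w} = \tfrac12(P_{y,w} \pm \Psig_{y,w})$ is compatible with the splitting of both $P_{y,w}$ (Lemma~\ref{split-lem}(b)) and $\Psig_{y,w}$ (Lemma~\ref{split-lem}(c)); since $P_{y,w} = P_{y',w'}P_{y'',w''}$ and $\Psig_{y,w} = \Psig_{y',w'}\Psig_{y'',w''}$, one gets the identity
\[
P^\pm_{y,w} = \tfrac12\bigl(P_{y',w'}P_{y'',w''} \pm \Psig_{y',w'}\Psig_{y'',w''}\bigr),
\]
which can be reorganized in terms of $P^\pm_{y',w'}$ and $P^\pm_{y'',w''}$ in the familiar bilinear way.

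The one genuinely new ingredient, not covered by the pure-product arguments above, is the case of an irreducible factor that is \emph{not} fixed by $*$. When an irreducible component $(W_0,S_0)$ is swapped by $*$ with a disjoint isomorphic copy, the corresponding sub-triple is exactly the situation of Proposition~\ref{mod-prop}: $(W_0,S_0) \times (W_0,S_0)$ with $*$ the coordinate swap. For such a sub-triple, Lemma~\ref{recovery-thm2} says precisely that Property~X for the \emph{single} irreducible Coxeter system $(W_0,S_0)$ implies Property~X$'$ for the swapped pair. So in the inductive partition I would, whenever possible, peel off such swapped pairs and dispatch them via Lemma~\ref{recovery-thm2}, while the $*$-fixed pieces are handled by the product arguments plus the inductive hypothesis. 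I expect the main obstacle to be purely organizational: setting up the induction so that the partition into $S'$ and $S''$ always respects $*$ and always leaves both pieces of strictly smaller rank (which fails only when $(W,S)$ is itself irreducible or is a single swapped pair — the former is the base case, the latter is exactly Lemma~\ref{recovery-thm2}), and then bookkeeping the four properties uniformly. No deep estimate is needed; the combinatorics of ``$*$ permutes irreducible factors'' together with Lemmas~\ref{bal-lem}, \ref{myown-lem}, \ref{recovery-thm2}, \ref{split-lem} and Corollary~\ref{vv-cor} should suffice.
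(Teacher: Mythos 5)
Your proposal is correct and takes essentially the same route as the paper's proof: induction on the (finite) rank, a $*$-stable splitting $S = S' \cup S''$ handled via Lemma \ref{split-lem} together with the bilinear identities expressing $P^\pm_{y,w}$ and $h^\pm_{x,y;z}$ in terms of the primed and double-primed data (with Lemma \ref{bal-lem} and Corollary \ref{vv-cor} supplying the unimodality/parity input for D$'$), and the single swapped-pair case dispatched through Proposition \ref{mod-prop} and Lemma \ref{recovery-thm2}. Only be careful to phrase the A$'$, C$'$, D$'$ steps through those bilinear sum-of-products identities (e.g.\ $P^\pm_{y,w} = P^+_{y',w'}P^\pm_{y'',w''} + P^-_{y',w'}P^\mp_{y'',w''}$) rather than as literal products, since $P^\pm$ and $h^\pm$ themselves do not factor across the splitting.
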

 
 \begin{proof}
 As in the proof of Proposition \ref{reduction-prop}, we  proceed by induction on the  rank of $(W,S)$, which we may assume to be finite, supposing Properties X and X$'$ hold with respect to any choice of involution for all irreducible factors of our Coxeter system.
 
If $S'$ and $S''$ cannot  both be chosen to  be proper subsets of $S$, then either $(W,S)$ is irreducible, or there are disjoint subsets $J',J'' \subset S$ with $S = J' \cup J''$ such that $\{ s^* : s \in J'\} = J''$ and such that the Coxeter systems  $(W_{J'},J')$ and $(W_{J''},J'')$ are both irreducible, where $W_{J'} = \langle J'\rangle$ and $W_{J''} = \langle J''\rangle$.
In the first case Property X$'$ holds for the triple $(W,S,*)$ by hypothesis. In the second case,  $W_{J'}\cong W_{J''}$ and we may identify the triple $(W,S,*)$  with a Coxeter system with involution of the form in Proposition \ref{mod-prop}. In this situation, it follows by Proposition \ref{reduction-prop} that Property X holds for the Coxeter system $(W_{J'},J')$, and so it follows in turn by Lemma \ref{recovery-thm2} that Property X$'$ holds for $(W,S,*)$.

On the other hand suppose $S'$ and $S''$ can both be chosen to be proper subsets of $S$. 
Let $x \in W$ and $y,z,w \in \I$ and observe that in the notation of Lemma \ref{split-lem} the following identities   hold:
\begin{itemize}
\item $
P^\pm_{y,w} = P^+_{y',w'} P^\pm_{y'',w''} + P^-_{y',w'} P^\mp_{y',w'} $

\item $P^\pm_{y,w} - P^\pm_{z,w} =\tfrac{1}{2} (P^\pm_{y',w'} + P^\pm_{z',w'})(P^\pm_{y'',w''} - P^\pm_{z'',w''}) +\tfrac{1}{2} (P^\pm_{y',w'} - P^\pm_{z',w'})(P^\pm_{y'',w''} + P^\pm_{z'',w''}).
$

\item $h^\pm_{x,y;z} = h^+_{x',y';z'} h^\pm_{x'',y'';z''} + h^-_{x',y';z'} h^\mp_{x'',y'';z''}
$

\end{itemize}
As we may assume by induction Property X$'$ holds for $(W',S')$ and $(W'',S'')$,
these identities (together Lemma \ref{bal-lem} and with Corollary \ref{vv-cor}) imply that Property X$'$ holds for $(W,S,*)$.
 \end{proof}

\section{Computations for finite dihedral Coxeter systems}\label{dihedral-sect}

Fix a positive integer $m \in \{3,4,5,\dots\}$ and suppose $(W,S)$ is the finite Coxeter system of type $I_2(m)$. (We require $m\geq 3$ so that $(W,S)$ is irreducible.) We take $S = \{ s,t\}$ to be a set with two elements, and define \[W = \langle s,t : s^2=t^2 = (st)^m = 1 \rangle\]
as the dihedral group of order $2m$.
It is well-known that $P_{y,w} = 1$ for all $ y,w \in W$ with $y\leq w$ (see \cite[\S4.2]{Fokko}), and we prove here the analogous result that  in the finite dihedral case, for any choice of $S$-preserving involution $* \in \Aut(W)$ one has likewise $\Psig_{y,w}=1$ for all $y,w \in \I$ with $y \leq w$. The same   statement holds in the infinite dihedral case by \cite[Proposition 3.8]{EM1}, and so  we are able to deduce here that Properties \descref{A$'$} and \descref{B$'$} hold for all Coxeter systems of rank two.
\begin{remark}
%Our treatment of type $I_2(m)$ here is incomplete.
Du Cloux has derived explicit formulas in the dihedral for the structure constants $h_{x,y;z}$; see \cite[Propositions 4.4 and 4.6]{Fokko}.
We imagine that similar formulas can be derived and used to show that Properties \descref{C$'$} and \descref{D$'$} for  dihedral Coxeter systems, but the calculations necessary for this  appear significantly more involved, and we do not undertake them here.
\end{remark}

To  denote the elements of the dihedral group $W$, we %adopt the following notation.
define for positive integers $i$  
\[  {[s,i)} = \underbrace{ststs\cdots}_{i\text{ factors}} 
\qquand
 {[t,i)} =  \underbrace{tstst\cdots}_{i\text{ factors}}. \]
% and also
% \[ 
% (i,s]=\underbrace{\cdots ststs}_{i\text{ factors}}
% \qquand
%(i,t] = \underbrace{\cdots tstst}_{i\text{ factors}}
%.
%\]
%The distinct elements of 
%$W$ are then 
%\[ 1,\qquad [s,1),\dots,[s,m-1),\qquad [t,1),\dots,[t,m-1),\qquand [s,m)= [t,m).\]
%%or alternatively
%%\[ 1,\qquad (1,s],\dots,(m-1,s],\qquad (1,t],\dots,(m-1,t],\qquand (m,s] = (m,t].\]
%The elements just listed are all reduced expressions, and 
There exist exactly two $S$-preserving involution $*$ of $W$: either $*$ is the identity automorphism or $*$ is the automorphism interchanging $s$ and $t$.
 If $*$ is trivial, then $\I$ consists of the identity, the longest element, and  all elements of $W$ of odd length, 
 i.e., 
\[ 1,\qquad [s,1),\ [s,3),\ [s,5),\ \dots \qquad  [t,1),\ [t,3),\ [t,5),\ \dots\qquand [s,m)=[t,m).
\] 
In the nontrivial case $\I$ consists of the longest element and all elements of even length, i.e., 
\[ 1,\qquad [s,2),\ [s,4),\ [s,6),\ \dots \qquad [t,2),\ [t,4),\ [t,6),\ \dots \qquand [s,m)=[t,m).
\]
%Before proving the main result of this section, we state three short lemmas in succession. 
%assume $(W,S)$ is of dihedral type $I_2(m)$, with $m \in \{3,4,\dots\}$ finite, and that 
Fix an arbitrary choice of $S$-preserving involution  $* \in \Aut(W)$
and
 write $w_0 =[s,m)=[t,m)$ for the longest element in $W$. 
 Every $w \in W$ has a unique reduced expression except  $w_0$, which has exactly two reduced expressions given by $ststs\cdots$ and $tstst\cdots$ (each with $m$ factors).
The Bruhat order on $W$ has the simple description that $y< w$ if and only if $\ell(y)  < \ell(w)$.

We note two lemmas before stating our main result.
%
%
%\begin{lemma}\label{dih-red-lem} Every $w \in W$ has a unique reduced expression except $w_0$, which has exactly two reduced expressions given by $ststs\cdots$ and $tstst\cdots$ (each with $m$ factors).
%\end{lemma}
%
%The proof of the preceding lemma is a simple exercise which we omit.
%\begin{proof}
%Certainly  $w_0$ has at least two reduced expressions and every other element has at least one. As there are only $2m+1$ distinct (possibly empty) expressions of length at most $m$ involving $s$ and $t$ without equal adjacent letters, and since $|W| = 2m$, we may replace ``at least'' in the previous sentence by ``exactly.''
%\end{proof}

\begin{lemma}\label{commute-lem}
Suppose $r \in S$ and $w \in \I$ with $rw= wr^*$. Then $m$ is odd or $*$ is trivial, such that:
\ben
\item[(a)] If $m$ is odd  and $*$ is trivial then  $w \in \{1,r\}$.

\item[(b)] If $m$ is odd and $*$ is nontrivial then $w \in \{ w_0, rw_0\}$.

\item[(c)] If $m$ is even and $*$ is trivial and $w \in \{w_0,rw_0\}$.

%\item[(d)] The case that $m$ is even and $*$ is nontrivial cannot occur.
\een
\end{lemma}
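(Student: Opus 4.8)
The statement to prove is Lemma~\ref{commute-lem}, which classifies pairs $(r,w)$ with $r \in S$, $w \in \I$, and $rw = wr^*$ in the finite dihedral group $W$ of type $I_2(m)$. The first thing I would do is translate the commutation condition $rw = wr^*$ into length combinatorics. Since $w = rwr^*$ forces $\ell(w) = \ell(rwr^*)$, and multiplying by a simple reflection on each side changes length by $0$ or $\pm 2$, one sees that either $\ell(rw) = \ell(w) - 1$ and $\ell(wr^*) = \ell(w)-1$ with $rwr^* = w$ (length unchanged after two cancellations), i.e.\ $r \in \Des(w)$ and $r^* \in \mathrm{Des}_R(w)$; or $\ell(rw) = \ell(w)+1$ and $\ell(wr^*) = \ell(w)+1$, in which case $rw = wr^*$ is an element of length $\ell(w)+1$ obtained two ways. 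In the dihedral group, an element of length $k < m$ has a unique reduced word, so $\ell(w) < m-1$ in the ascending case would force $rw$ to begin with $r$ on the left and end with $r^*$ on the right in its unique reduced word of length $\ell(w)+1$; comparing the two-letter prefixes/suffixes of $[s,k)$ pins down exactly when this is possible.

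Next I would organize the casework by the two possible involutions. If $*$ is trivial, then $rw = wr$ with $r \in \{s,t\}$. In the dihedral group, $sw = ws$ forces (examining reduced words) $w \in \{1, s\}$ when $sw > w$ or $w = w_0$ (using that $w_0$ is central precisely when $m$ is even, and more generally $s w_0 = w_0 s^{*}$ always but here $*$ trivial so this needs $m$ even — wait, one must be careful: $w_0 s$ versus $s w_0$; the longest element conjugates $s \mapsto$ the other generator when $m$ is odd). This is exactly the dichotomy recorded in parts (a) and (c): for $m$ odd and $*$ trivial the only solutions with $rw \ne wr$ impossible and $rw=wr$ hold are $w \in \{1,r\}$, since $r w_0 \ne w_0 r$ when $m$ is odd; for $m$ even, $w_0$ is central so $r w_0 = w_0 r$ always, giving $w \in \{w_0, r w_0\}$ (note $r w_0$ has length $m-1$ and $r(rw_0) = w_0 = (rw_0)r$). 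I would verify that no other $w$ works by the reduced-word prefix/suffix comparison above, and also check that $w \in \I$ is automatically satisfied in these cases. If $*$ is nontrivial (so $s^* = t$), then $rw = wr^*$ becomes $sw = wt$ or $tw = ws$; applying $*$ (which is an automorphism) and/or left-right symmetry, and using that $w^* = w^{-1}$ for $w \in \I$, one reduces to understanding when $sw = wt$. Here the solutions are $w \in \{w_0, s w_0\}$ (resp.\ $\{w_0, r w_0\}$) precisely when $m$ is odd, since $s w_0 = w_0 t = w_0 s^*$ uses that conjugation by $w_0$ swaps $s$ and $t$, which happens exactly for $m$ odd.

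The cleanest uniform approach is probably: (1) show $rw = wr^*$ implies either $\ell(rw) < \ell(w)$ (the ``descent'' case) or $w$ has length $< m$ and its unique reduced word both starts with $r$ and ends with $r^*$ after prepending, which by direct inspection of the words $[s,k)$ and $[t,k)$ forces $k \le 1$; (2) in the descent case, $r w r^* = w$ means conjugation by $r$ fixes $w$, and I would analyze this using that $w = r v$ with $v = $ (shorter, and then track the right end), concluding $v r^* $ has length $\ell(v) - 1$ too — iterating, $w$ must be the longest element $w_0$ or $r w_0$ (the only elements both of whose one-sided stabilizer behavior under $r$ and $r^*$ is forced), with the parity of $m$ dictating whether $r w_0 r^* = w_0$; (3) assemble the four combinations of ($m$ parity) $\times$ ($*$ trivial or not) to read off cases (a), (b), (c), and observe the remaining combination ($m$ even, $*$ nontrivial) also lands in case (c)'s conclusion — wait, the lemma only lists (a),(b),(c) and the preamble says ``$m$ is odd or $*$ is trivial,'' so I must also prove that the case $m$ even \emph{and} $*$ nontrivial admits \emph{no} solutions $(r,w)$; this follows because then $w_0$ is central (so $r w_0 = w_0 r \ne w_0 r^*$) and no shorter $w$ works either, by step (1)--(2).

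\textbf{Main obstacle.} The delicate point is step~(1)/(2): carefully ruling out ``intermediate'' elements $w$ with $1 < \ell(w) < m$. The reduced word of such a $w$ is unique, say $[s,k)$; the condition $sw = ws^*$ (or the $t$-analogue) then says the word obtained by prepending $s$ equals the word obtained by appending $s^*$, and both have length $k+1 \le m$ hence are themselves unique reduced words. Comparing the first two letters ($ss$ vs.\ $st$ — only the latter is reduced) immediately forces $s \notin \Des(w)$, i.e.\ we are in the length-increasing case, and then comparing letters forces $k+1 = m$, i.e.\ $\ell(w) = m-1$ and $w \in \{s w_0, t w_0\} = \{r w_0\}$; the length-$m-1$ element $r w_0$ does satisfy $r(r w_0) = w_0$ and $(r w_0) r^* = w_0$ iff $w_0 r^* = r w_0$ iff ($m$ odd and $r^* \ne r$, or $m$ even and $r^* = r$) — exactly the parity/involution bookkeeping of the lemma. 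I expect this word-comparison argument, plus the parallel analysis for the central element $w_0$ and for $w \in \{1, r\}$, to be the bulk of the work, but each piece is elementary once the reduced-word uniqueness below length $m$ is invoked.
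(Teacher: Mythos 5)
Your proposal is correct and follows essentially the same route as the paper: an elementary case analysis on $\ell(w)$ in the dihedral group, using uniqueness of reduced words below length $m$ to rule out the intermediate lengths $0<\ell(w)<m-1$, and the fact that conjugation by $w_0$ swaps the two simple generators exactly when $m$ is odd to settle the lengths $m-1$ and $m$ (including the empty case $m$ even, $*$ nontrivial). The only cosmetic difference is that the paper first reduces to the ascent case $rw>w$ via the substitution $w\mapsto r\act w$, whereas you treat the ascent and descent cases directly by comparing reduced words.
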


\begin{proof} Since $rw=wr^*$ if and only if $rw' = w'r^*$ where $w' = r\act w$, we may assume $rw > w$.
If $\ell(w) = 0$ then $sw=ws^*$ if and only if $s=s^*$. 
If $0 < \ell(w) < m-1$ then it follows from the previous lemma that $rw \neq wr^*$.
It remains only to consider the case when $\ell(w) = m-1$ (since when $\ell(w) = m$ it cannot hold that $rw>w$).
In this situation $rw=wr^*$ if and only if $w_0 = rw = r(rw)r^* = rw_0r^*$. One checks that this holds precisely when $m=\ell(w_0)$ is odd and $*$ is nontrivial or $m$ is even and $*$ is trivial.
\end{proof}

%Recall the definition of the polynomials $\(R^\sigma_{y,w}\)_{y,w \in \I}$ from Section \ref{algo-sect}. These polynomials have a particularly simply form for dihedral Coxeter systems, which will afford a proof of an even simpler formula for the polynomials $\Psig_{y,w}$.
%
%\begin{lemma}
%%Suppose $(W,S)$ is of dihedral type $I_2(m)$, with $m \in \{3,4,\dots\}$ finite. Let $* \in \Aut(W)$ be either of the two $S$-preserving involutions, and 
%Suppose $y,w \in \I$ such that $y<w$, and let $d = \ell(w) - \ell(y)$.
%\ben
%\item[(a)] If $d$ is even then $R^\sigma_{y,w} = q^d - 2q^{d-2} + 2q^{d-4} - \dots \mp 2q^2 \pm 1$.
%\item[(b)] If $d$ is odd then $R^\sigma_{y,w} = q^d - 2q^{d-2} + 2q^{d-4} - \dots \mp 2q \pm 1$.
%\een
%\end{lemma}
%
%\begin{remark} In other words, $R^\sigma_{y,w}$ is the monic polynomial in $q$ whose constant term is $(-1)^{\lceil d/2\rceil}$,
%whose  nonzero terms occur in degrees $\{0 \} \cup \{ d-2i : i =0,1,2,\dots, \lfloor \frac{d}{2}\rfloor\}$, and  whose non-constant non-leading nonzero coefficients (corresponding to $q^{d-2}, q^{d-4},\dots $) are given in order of decreasing degree by $-2,2,-2,2,\dots$.
%For example, if $\d \in \{1,2\}$ then $R^\sigma_{y,w} = q^d- 1$  and if $d = 6$ then $R^\sigma_{y,w} = q^6 - 2q^4 + 2q^2 -1$.
%\end{remark}
%
%\begin{proof}
%The proof is by induction on $\ell(w)$, using the recurrence for $R^\sigma_{y,w}$ in Proposition \ref{rprop}.
%\end{proof}

\begin{lemma}\label{mu1-lem}
Suppose $y,w \in \I$ and $\ell(w) - \ell(y) = 1$. Then  $m$ is odd or $*$ is trivial, such that:
\ben
\item[(a)] If $m$ is odd and $*$ is trivial then $y=1$ and $w \in S$.

\item[(b)] If $m$ is odd and $*$ is nontrivial then $y \in \{sw_0, tw_0\}$ and $w = w_0$.

\item[(c)] If $m$ is even and $*$ is trivial then  $y \in \{sw_0, tw_0\}$ and $w = w_0$, or $y = 1$ and $w \in S$.

%\item[(d)] The case that $m$ is even and $*$ is nontrivial cannot occur.
\een
\end{lemma}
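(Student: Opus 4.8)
The plan is to run a direct case analysis on the parity of $m$ and on whether $*$ is trivial, using the explicit lists of twisted involutions recorded above together with the elementary fact that the dihedral group $W$ has exactly one element of length $0$, exactly one of length $m$ (namely $w_0$), and exactly two of each length strictly between $0$ and $m$.

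The starting observation is that $\ell(w)-\ell(y)=1$ forces exactly one of $y,w$ to have even length and the other to have odd length. First I would treat the case that $*$ is trivial, where the only even-length members of $\I$ are $1$ and, when $m$ is even, $w_0$. If the even-length element among $y,w$ is $1$, it must be $y$ (otherwise $\ell(y)=-1$), whence $\ell(w)=1$ and $w\in S$, and both elements of $S$ indeed lie in $\I$. If instead it is $w_0$, then it must be $w$ (otherwise $\ell(w)=m+1>\ell(w_0)$, impossible), which requires $m$ to be even; then $\ell(y)=m-1$, so $y$ is one of the two length-$(m-1)$ elements, and reading off the two reduced words $ststs\cdots$ and $tstst\cdots$ of $w_0$ identifies these as $sw_0$ and $tw_0$. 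Hence, when $*$ is trivial, only the first alternative can occur if $m$ is odd (giving (a)), while both can occur if $m$ is even (giving (c)).

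Next I would handle the case that $*$ is nontrivial, in which $\I$ consists of $w_0$ together with all elements of $W$ of even length. If $m$ were even, every element of $\I$ would have even length, contradicting the parity observation; so $m$ must be odd, which secures the ``$m$ is odd or $*$ is trivial'' dichotomy in this case. With $m$ odd, $w_0$ is the unique odd-length element of $\I$, so the odd-length one of $y,w$ equals $w_0$; it cannot be $y$, so $w=w_0$ and $\ell(y)=m-1$ is even, forcing $y$ to be one of the length-$(m-1)$ elements $[s,m-1), [t,m-1)$, i.e. $y\in\{sw_0,tw_0\}$. This is (b).

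I do not expect any real obstacle: the whole argument is a finite check governed by the length-parity constraint and the enumerations of $\I$. The only points that deserve a word of justification are the standing facts that $w_0\in\I$ for every choice of $*$ (true since $w_0$ is the unique longest element, hence fixed by the length-preserving, $S$-preserving automorphism $*$, and $w_0=w_0^{-1}$) and the identification of the two length-$(m-1)$ elements with $sw_0$ and $tw_0$, which is immediate from the two reduced expressions for $w_0$.
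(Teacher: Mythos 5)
Your proposal is correct and is essentially the paper's own argument: the paper proves this lemma by inspecting the explicit lists of elements of $\I$ given before Lemma \ref{commute-lem} (using that $[s,i)$ and $[t,i)$ have length $i$), which is exactly the parity-and-enumeration case analysis you carry out, just written in more detail.
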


\begin{proof}
The claims here follow by inspecting the lists of elements in $\I$ given before Lemma \ref{commute-lem}, noting that the elements $[s,i)$ and $[t,i)$ have length $i$ when $i\leq m$.
\end{proof}

We now have the main result of this section.
Despite the simplicity of this statement, we know of no easier proof than the following somewhat lengthy inductive argument using Corollary \ref{main-recurrence}.

\begin{theorem}\label{dihedral-AB} Suppose $(W,S)$ is of dihedral type $I_2(m)$, with $3 \leq m < \infty$. Let $* \in \Aut(W)$ be either  $S$-preserving involution. 
Then $\Psig_{y,w} = 1$ for all $y,w \in \I$ with $y \leq w$. 
\end{theorem}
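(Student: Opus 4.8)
The plan is to induct on $\ell(w)$, using Corollary~\ref{main-recurrence} as the engine, and to handle the few places where $sw=ws^*$ (equivalently $c=1$ in part (b)) by the structural lemmas~\ref{commute-lem} and~\ref{mu1-lem}. The base cases $\ell(w)\le 1$ are immediate: if $\ell(w)=0$ then $y=w=1$ and $\Psig_{1,1}=1$, and if $\ell(w)=1$ then $w\in S$ and the only $y\le w$ in $\I$ with $y\neq w$ is $y=1$, in which case Theorem~\ref{mult-thm} (the $s\notin\Des(w)$, $sw\neq ws^*$ branch applied to $w=1$ — or a direct appeal to property (b) of Theorem-Definition~\ref{lv-thmdef3}) gives $\Psig_{1,w}=1$. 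For the inductive step fix $y<w$ with $\ell(w)\ge 2$, choose $s\in\Des(w)$ (which is nonempty), and split into the two cases $s\notin\Des(y)$ and $s\in\Des(y)$.

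If $s\notin\Des(y)$, then Corollary~\ref{main-recurrence}(a) gives $\Psig_{y,w}=\Psig_{s\act y,w}$, and since $s\in\Des(s\act y)$ and $s\act y\le w$ (the latter because $y<w$ and $s\in\Des(w)$ — a standard Bruhat-order fact, or one may invoke that $s\act y\le w$ whenever $y\le w$ and $s\in\Des(w)$, analogous to the ordinary KL situation) we reduce to the second case. So assume $s\in\Des(y)$ and apply Corollary~\ref{main-recurrence}(b). Write $w'=s\act w$, so $\ell(w')<\ell(w)$, and $y\le w'$ again; set $c=\delta_{sw,ws^*}$ and $d=\delta_{sy,ys^*}$. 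The key observation is that in the dihedral group, Lemma~\ref{commute-lem} shows that $c=1$ forces $w$ to have length $m-1$ or $m$ and to lie in $\{w_0,sw_0\}$ up to the $s\act$ action, and similarly $d=1$ forces strong constraints on $y$; in particular, whenever $\ell(w)<m-1$ we have $c=d=0$, and then \eqref{partb} reads
\[
\Psig_{y,w} = P^\sigma_{s\act y,w'} + q\, P^\sigma_{y,w'} - \sum_{\substack{z\in\I;\ sz<z\\ y\le z<w}} v^{\ell(w)-\ell(z)}\,\msig(z\xrightarrow{s}w')\,P^\sigma_{y,z}.
\]
By the inductive hypothesis every $P^\sigma$ appearing on the right with second index of length $<\ell(w)$ equals $1$ (when the first index is $\le$ the second) or $0$ (otherwise). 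The $\msig$ terms are controlled by Lemma~\ref{mu1-lem}: an odd length gap gives $\mu^\sigma(z,w')(v+v^{-1})$ with $\mu^\sigma(z,w')\in\{0,1\}$ by induction and Lemma~\ref{mu1-lem}, and an even gap gives $\mu^\sigma(z,w';s)$ which one computes from its definition using the already-known $\mu^\sigma$ and $\nu^\sigma$ values (all $0$ since the $P^\sigma$ are constant $1$, hence have no $v^{\ell(w')-\ell(z)-1}$ or $v^{\ell(w')-\ell(z)-2}$ term unless the gap is exactly $1$ or $2$, which are the Lemma~\ref{mu1-lem} cases). Plugging in, the right-hand side telescopes to $1$; this is essentially the same cancellation that proves $P_{y,w}=1$ in the ordinary dihedral case, and I would present it by a short direct computation after pinning down exactly which $z$ contribute.

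The remaining cases are the ``top'' of the poset, $\ell(w)\in\{m-1,m\}$, where $c$ or $d$ can be $1$ and where one may land in the exceptional branch \eqref{thiseq} (the summand $z=y$ with $sw=ws^*$ and $\ell(w)-\ell(y)$ odd). Here I would use Lemmas~\ref{commute-lem} and~\ref{mu1-lem} to enumerate explicitly the finitely many pairs $(y,w)$ that can occur for each parity of $m$ and each choice of $*$, and check the recurrence by hand in each configuration — e.g. when $w=w_0$, $y=sw_0$, $*$ nontrivial, $m$ odd, we have $c=d=1$ and \eqref{thiseq} becomes $(q+1)\Psig_{y,w_0}=f+v^{2}\mu^\sigma(y,w_0)$ with $\mu^\sigma(y,w_0)=1$; solving gives $\Psig_{y,w_0}=1$ once $f=q+1-q=1\cdot(q+1)-q$ is verified from the known lower terms. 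The main obstacle I anticipate is precisely this bookkeeping near $w_0$: making sure the exceptional self-referential term in \eqref{thiseq} is handled correctly and that the value of $\mu^\sigma(y,w)$ used there is consistent with what the induction has already produced (one reads it off as the leading coefficient of the $\Psig_{y,w}$ being computed, so there is a small fixed-point argument). Everything else is routine once the lists of involutions and the Bruhat order (length-only) are in hand.
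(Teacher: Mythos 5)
Your overall strategy (induction via Corollary \ref{main-recurrence}, with Lemmas \ref{commute-lem} and \ref{mu1-lem} controlling the exceptional configurations) is the same as the paper's, but as written the argument has concrete gaps at exactly the points where the work lies. First, after setting $w'=s\act w$ you assert ``$y\le w'$ again''; this is false in genuinely occurring cases --- e.g.\ $m\ge 6$, $*$ trivial, $y=sts$, $w=ststs$, where $w'=s\act w=tst$ and $y\not\le w'$ --- and the paper must treat $y\not\le w'$ (equivalently $\ell(y)=\ell(w')$) as a separate case with different bookkeeping, since then $P^\sigma_{y,w'}=0$ and the only summand is $z=y$. Second, your claim that $\ell(w)<m-1$ forces $c=d=0$ is wrong for $d$: by Lemma \ref{commute-lem}, with $m$ odd and $*$ trivial the element $y=s$ has $sy=ys^*$, so $d=1$ no matter how short $w$ is. (This is repairable, since $(q+1)^d+q(q-d)=q^2+1$ for both values of $d$, but you do not observe this, and your displayed ``$c=d=0$'' recurrence is also miscopied: the coefficient of $P^\sigma_{y,w'}$ should be $q^2$, not $q$.) Third, the assertion that ``the right-hand side telescopes to $1$'' is precisely the content of the proof and is deferred; moreover your parenthetical reasoning that the relevant $\mu^\sigma,\nu^\sigma$ values are ``all $0$'' is not right: the unique $z\in\I$ with $sz<z$, $y\le z<w$ and $\ell(w')-\ell(z)=2$ (namely $z=r'\act w'$ for the other generator $r'$) has $\nu^\sigma(z,w')=1$, and its contribution $q^2$ is exactly what cancels the $q^2$ coming from $q(q-d)P^\sigma_{y,w'}$; drop it and the recurrence yields $q^2+1$ rather than $1$.

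Finally, your plan for the top of the poset is not adequate as stated. When $\ell(w)\ge 2$ and $s\in\Des(w)$, $c=1$ forces $w=w_0$, but the admissible $y$ there range over all twisted involutions below $w_0$ with $s\in\Des(y)$, a set whose size grows with $m$, so ``enumerate the finitely many pairs and check by hand'' is not an argument uniform in $m$. One needs the general identity $(q+1)\Psig_{y,w}=1+q+q^{\,n+1}\mu^\sigma(y,w)$ (with $n=\tfrac{\ell(w)-\ell(y)-1}{2}$) together with the coefficient-matching argument showing that the self-referential equation has the unique solution $\Psig_{y,w}=1$; you gesture at this ``fixed-point argument'' but do not carry it out. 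Your illustrative pair $(y,w)=(sw_0,w_0)$ with $*$ nontrivial and $m$ odd is also not an instance of the exceptional branch: there $sy=w_0>y$, so $s\notin\Des(y)$ and the pair is dispatched trivially by Corollary \ref{main-recurrence}(a). So the skeleton is right, but the case analysis near $w_0$, the $y\not\le w'$ case, and the identification of the contributing summands $z$ all still need to be done, and these constitute essentially the whole proof.
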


\begin{proof}
Let $y,w \in \I$ such that $y \leq w$.
If $w  = 1$ then $y\leq w$ implies $y=w$ so $\Psig_{y,w} =1$ as desired. If $\ell(w) \in \{1,2\}$, then $w = r \act 1$ for some $r \in S$, in which case $y \leq w$ if and only if $y \in \{1,w\}$, whence $\Psig_{y,w}  = \Psig_{r\act y,w}= 1$ by the first part of Corollary \ref{main-recurrence}. 

For the remainder of this proof we assume that $\ell(w) \geq 3$.
We may assume that $y<w$ since $\Psig_{w,w} = 1$, and may take as an inductive hypothesis that
 $\Psig_{y',w'} = 1$ when $ w' < w$ or when $w=w'$ and $y'>y$. Let $r \in \Des(w)$ and set $w' = r\act w$. If $r \notin \Des(y)$ then $\Psig_{y,w} = \Psig_{r\act y,w} = 1$ by   hypothesis, so assume $r \in \Des(y)$. This implies that $y\neq 1$, and that $r \act y \leq w'$.

Suppose $y \not \leq w'$. Then $\ell(y) = \ell(w')$, so
the only element $z \in \I$ with $y \leq z < w$ is $z=y$,
and 
 the second part of Corollary \ref{main-recurrence} becomes 
\[  (q+1)^c \Psig_{y,w} = (q+1)^d - v^{\ell(w)-\ell(y)+c} \cdot \msig(y\xrightarrow{r}w')\] where $c = \delta_{rw,wr^*}$ and $d =\delta_{ry,yr^*}$.
To express $\msig(y\xrightarrow{r}w')$ more simply, we note that since  $\ell(y) = \ell(w')$, we have
\[\nu^\sigma(y,w') =  \mu^\sigma(y,x)\mu^\sigma(x,w') = 0\qquad\text{for all $x \in \I$,}\] and also \[ \delta_{ry,yr^*} \mu^\sigma(ry,w') =\delta_{ry,yr^*}\qquand \delta_{rw',w'r^*} \mu^\sigma(y,rw')  = \delta_{rw,wr^*} \mu^\sigma(y,w).\]
Thus, by the definition \eqref{msig-def}, our previous equation becomes
\[  (q+1)^c \Psig_{y,w} =
 (q+1)^d - q (d - c\cdot \mu^\sigma(y,w)).
\] 
If $c=0$ then this reduces to the formula $\Psig_{y,w} = (q+1)^d - dq $ which is equal to 1 for all $d \in \{0,1\}$. If $c=1$ then $\ell(y) = \ell(w') = \ell(w)-1$ so  $\mu^\sigma(y,w) $ is the constant coefficient of $\Psig_{y,w}$ and therefore equal to 1. In this case we must have $d=0$ since (using Lemma \ref{commute-lem})  the only element $x \in \I$ with $rx=xr^*$ and $\ell(x) = \ell(w) - 1$ is $w'$ which by assumption is distinct from $y$. Thus if $c=1$ then $d=0$ and our 
 equation becomes $(q+1)\Psig_{y,w} = q+1$ so $\Psig_{y,w} = 1$ again as desired.

From now on we assume $y \leq w' \leq w$. Since $r \in \Des(y) \setminus \Des(w')$, we must actually have $y < w'$. Further, since $y \neq 1$ and $w'\neq w_0$, it follows from Lemma \ref{mu1-lem} that $\ell(w') - \ell(y) \geq 2$. 
Continuing, by the second part of Corollary \ref{main-recurrence} and our inductive hypothesis, we have
\[
 (q+1)^c\Psig_{y,w} = q^2+1- \sum_{\substack{ z \in \I; \hs  rz<z \\  y\leq z < w}} v^{\ell(w) - \ell(z)+c} \msig(z\xrightarrow{r}w')
 \]
 where $c = \delta_{rw,wr^*}$. (There are no $d$'s here because $(q+1)^d +q(q-d) = q^2+1$ for all $d \in \{0,1\}$.)
We wish to replace the right  hand side of this equation with a more elementary expression.
To this end, suppose $z \in \I$ such that $rz<z$ and $y\leq z <w$. 
We make the following observations:
\ben
\item[(a)] $\mu^\sigma(z,w') = 0$. This follows because, by hypothesis, $\mu^\sigma(z,w')$ is 1 if $\ell(w') - \ell(z) = 1$ and is 0 otherwise. We cannot have $\ell(w') - \ell(z) = 1$ by Lemma \ref{mu1-lem} since $z\neq 1$ and $w' \neq w_0$.

\item[(b)] By definition and inductive hypothesis, $\nu^\sigma(z,w') = \begin{cases} 1&\text{if }\ell(w') - \ell(z) = 2 \\ 0&\text{otherwise}.\end{cases} $ %This follows by the definition of $\nu^\sigma$ and our inductive hypothesis.

\item[(c)] $\delta_{rz,zr^*} \mu^\sigma(rz,w') =0 $. This follows as $\mu^\sigma(rz,w') =0$ unless $\ell(w') - \ell(rz)  = 1$, which by Lemma \ref{mu1-lem} occurs only if $rz = 1$ and $w' \in S$  (since $w' \neq w_0$). By assumption, however, we have $\ell(w') \geq \ell(y) + 2 \geq 3$.

\item[(d)] $\delta_{rw',w'r^*} \mu^\sigma(z,rw')= c\cdot \mu^\sigma(z,w)$ by definition.

\item[(e)] $\mu^\sigma(z,x)\mu^\sigma(x,w') = 0$ for all $x \in \I$ with $r \in \Des(x)$. This follows as the product can only be nonzero if $z <x < w'$, in which case by hypothesis the product is 1 if and only if $\ell(x) = \ell(z) + 1 = \ell(w') - 1$ and is 0 otherwise. If $\ell(x) = \ell(z) + 1$, however, then $x \neq 1$, so $\ell(x) \neq \ell(w') - 1$ as $w' \neq w_0$, by Lemma \ref{mu1-lem}.

\een
In consequence of (a), we deduce that $\msig(z\xrightarrow{r}w') =0$ if  $\ell(w') - \ell(z)$ is odd, and in consequence of (b)-(e), we deduce that if $\ell(w') - \ell(z)$ is even then
\[ \msig(z\xrightarrow{r}w') = \nu^\sigma(z,w')  - c\cdot \mu^\sigma(z,w).\]
Thus, noting that $\ell(w) + c = \ell(w') + 2$, we have
\be\label{mess} (q+1)^c \Psig_{y,w} = q^2+1 - \(\sum_{z}  v^{\ell(w')-\ell(z) + 2} \cdot  \nu^\sigma(z,w')  \)+ \( \sum_z v^{\ell(w')-\ell(z) + 2} \cdot c\cdot  \mu^\sigma(z,w) \)\ee
where both sums are over $z \in \I$ with $rz<z$ and $y\leq z < w$ and $\ell(w') -\ell(z)$ even.
Recall that $\ell(w') - \ell(y) \geq 2$ and that $\ell(y) \geq 1$. 
From this and  the  description of the elements of $\I$, we  note two additional observations:
\begin{itemize}
\item
There exists exactly one element $z \in\I$ with $y \leq z <w$ and $rz<z$ and  $\ell(w') - \ell(z) $ even and $\nu^\sigma(z,w') \neq 0$. This is the element $z=r' \act w'$ where $r' \in \Des(w')\subset S$ is the generator distinct from $r \in S$, for which $\ell(w') - \ell(z) = 2$ and $\nu^\sigma(z,w') = 1$ by claim (b) above.
It follows that the first parenthesized sum in \eqref{mess} is equal to $q^2$.

\item 
If $c=1$ then by Lemma \ref{commute-lem} we must have $w=w_0$, since $\ell(w) \geq 3$ and $r \in \Des(w)$. In this case there exists exactly one element $z \in \I$ with $y < z <w$ (note that we exclude the case $y=z$) and $rz<z$ and $\ell(w') - \ell(z)$ even and $\mu^\sigma(z,w) \neq 0$. Namely, this element $z$ is given by the unique twisted involution of length $m - 1$ distinct from $w' = rw$. This element has $\ell(w') - \ell(z) = 0$ and $\mu^\sigma(z,w) = 1$, by inductive hypothesis. 
It follows that the second parenthesized sum in \eqref{mess} is equal to \[c\cdot q\ +\ c \cdot v^{\ell(w)-\ell(y) + 1} \cdot \mu^\sigma(y,w).\] 
The second term here corresponds to the summand indexed by $z=y$. Such a summand occurs if and only if $\ell(w') -\ell(y)$ is even, but our expression accounts for this circumstance because if $\ell(w') - \ell(y)$ is odd and $c \neq 0$ then nevertheless  $\mu^\sigma(y,w) = 0$, as $\ell(w) - \ell(y)$ would then not be odd.

\end{itemize}
Substituting these facts into \eqref{mess} gives 
\be\label{ohdear} (q+1)^c \Psig_{y,w} = 1\ +\ c \cdot q\ +\  c \cdot v^{\ell(w)-\ell(y)\ +\ 1} \cdot \mu^\sigma(y,w).\ee
If $c=0$ then it follows immediately that $\Psig_{y,w} = 1$. 
Suppose $c=1$. If $\ell(w) - \ell(y)$ is even then $\mu^\sigma(y,w) = 0$ so the preceding  equation becomes $(q+1) \Psig_{y,w} = q+1$ and we get likewise $\Psig_{y,w} = 1$.
Assume therefore that $\ell(w) - \ell(y)$ is odd. Define 
\[\mu_n = \mu^\sigma(y,w) \qquand n = \tfrac{\ell(w)-\ell(y) - 1}{2}\] so that by definition $\Psig_{y,w} = \mu_n q^n + \mu_{n-1} q^{n-1}+\dots +\mu_0$ for some integers $\mu_0,\dots,\mu_{n-1}$. In this notation, our  equation \eqref{ohdear} becomes
\[ (q+1)(\mu_n q^n + \mu_{n-1} q^{n-1}+\dots +\mu_0) = 1 + q + q^{n+1}\mu_n.\]
As the left hand side  is equal to $ \mu_nq^{n+1} +\sum_{i=1}^n (\mu_i + \mu_{i-1} )q^n + \mu_0$, equating coefficients of $q^i$ gives $\mu_0 = 1$ and $\mu_0 + \mu_1 = 1$ and $\mu_i + \mu_{i-1} = 0$ for $i=2,3,\dots,n$. The only solution to this system of equations is to set $\mu_0 = 1$ and $\mu_1=\mu_2=\dots=\mu_n = 0$; hence even in this final case we get $\Psig_{y,w} =1$ as desired.
\end{proof}

It follows that when $(W,S)$ is a finite dihedral Coxeter system, the polynomials  $P^-_{y,w} $ are all zero for $y,w \in \I$, while the polynomials  $P^+_{y,w}$ are 0 or 1 according to whether $y \not\leq w$ or $y \leq w$.  
We thus are left with the following corollary.

\begin{corollary} Properties \descref{A$'$} and \descref{B$'$} hold for all Coxeter systems of rank two.
\end{corollary}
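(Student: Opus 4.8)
The plan is to observe that in rank two the polynomials $P^\pm_{y,w}$ can be written down completely, so that Properties \descref{A$'$} and \descref{B$'$} collapse to a one-line check once Theorem \ref{dihedral-AB} is in hand. First I would dispose of the reducible case: a rank-two Coxeter system that is not irreducible is of type $I_2(2)$, i.e.\ a direct product $(W',S')\times(W'',S'')$ of two rank-one systems, each of type $A_1$. For $A_1$ the group has two elements, the involution is forced to be trivial, and every relevant polynomial is $\delta_{y,w}$ or $1$, so Properties \descref{A}, \descref{B}, \descref{A$'$}, \descref{B$'$} all hold trivially. Theorem \ref{x-thm} then yields Properties \descref{A$'$} and \descref{B$'$} for $(W,S)$ with respect to every $S$-preserving involution; the one case needing a word of care is an involution that interchanges the two generators, which is the situation of Proposition \ref{mod-prop} and is absorbed into Theorem \ref{x-thm} through Lemma \ref{recovery-thm2} (as in the proof of Proposition \ref{reductionprime-prop}).

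It remains to treat the irreducible case, $(W,S)$ of type $I_2(m)$ with $3\le m\le\infty$. Here I would feed in two facts: $P_{y,w}=1$ for all $y,w\in W$ with $y\le w$ (and $P_{y,w}=\delta_{y,w}$ otherwise), which holds for every dihedral system by \cite[\S4.2]{Fokko}; and $\Psig_{y,w}=1$ for all $y,w\in\I$ with $y\le w$ (and $\Psig_{y,w}=\delta_{y,w}$ otherwise), which is precisely Theorem \ref{dihedral-AB} when $m<\infty$ and \cite[Proposition 3.8]{EM1} when $m=\infty$. Combining these with the definition $P^\pm_{y,w}=\tfrac12(P_{y,w}\pm\Psig_{y,w})$ gives $P^-_{y,w}=0$ for all $y,w\in\I$, while $P^+_{y,w}$ equals $1$ when $y\le w$ and $0$ otherwise (note that $y\not\le w$ excludes $y=w$, so $\delta_{y,w}=0$ in that case).

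Property \descref{A$'$} is then immediate, since every $P^+_{y,w}$ and $P^-_{y,w}$ lies in $\NN\subset\NN[q]$. For Property \descref{B$'$}, the differences $P^-_{y,w}-P^-_{z,w}$ vanish identically, and for $y\le z$ in $\I$ a difference $P^+_{y,w}-P^+_{z,w}$ could be negative only if $P^+_{z,w}=1$ and $P^+_{y,w}=0$, i.e.\ $z\le w$ but $y\not\le w$; this is impossible because $y\le z\le w$ forces $y\le w$. The main obstacle is not really in this corollary at all: all the genuine work is already contained in Theorem \ref{dihedral-AB} (the delicate length-induction via Corollary \ref{main-recurrence}) and its infinite counterpart in \cite{EM1}. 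What is left is bookkeeping, and the only points requiring attention are the correct enumeration of the rank-two systems — in particular including the reducible system $I_2(2)$ together with the possibility of an involution swapping its two generators — and reading off the values of $P^\pm_{y,w}$ from the fact that both $P$ and $P^\sigma$ are identically $1$ on the Bruhat order.
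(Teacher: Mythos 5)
Your proof is correct and follows essentially the same route as the paper: the finite irreducible dihedral case via Theorem \ref{dihedral-AB} together with the explicit computation $P^-_{y,w}=0$, $P^+_{y,w}\in\{0,1\}$, the infinite dihedral case via the results of \cite{EM1}, and the reducible case $A_1\times A_1$ via Theorem \ref{x-thm}. You simply spell out the bookkeeping (including the generator-swapping involution handled through Proposition \ref{mod-prop}/Lemma \ref{recovery-thm2}) that the paper leaves implicit.
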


\begin{proof}
This follows from  Theorem \ref{dihedral-AB} and the preceding discussion (which covers the finite irreducible dihedral case),   \cite[Theorem 3.13]{EM1} (which covers the infinite dihedral case), and Theorem \ref{x-thm} (which covers type $A_1 \times A_1$).
\end{proof}

\appendix

\begin{table}
\caption{Irreducible finite Coxeter systems with involution; see  Section \ref{reduction-sect}}
\label{types-tbl}
\begin{center}
\begin{tabular}{l  |  l  |  l}
\hline
Name & Coxeter diagram for $(W,S)$ & Involution $* \in \Aut(W)$ \\ 
\hline
$A_n$ ($n\geq 1$) 
& 
$ \xy<0.0cm,-0.0cm> \xymatrix@R=-0.0cm@C=.5cm{
*{s_1} \ar  @{-} [r]   & 
*{s_2} \ar @{-} [r] &
*{\ \cdots\ } \ar @{-} [r]  &
*{s_n}
}\endxy$ 
& 
Identity \\
$^2A_n$ ($n\geq 2$) & & Diagram  $s_i \mapsto s_{n+1-i}$
\\ & \\
\hline
$BC_n$ ($n\geq 3$) 
&  
$\xy<0.0cm,-0.0cm> \xymatrix@R=-0.0cm@C=.5cm{
*{s_1} \ar  @{-} [r]^{4}   & 
*{s_2} \ar @{-} [r] &
*{\ \cdots\ } \ar @{-} [r]  &
%*{s_{n-1}} \ar @{-} [r]  &
*{s_n}
}\endxy$
&
Identity 
\\ &\\
\hline
$D_n$ ($n\geq 4$) 
& 
$\xy<0.0cm,-0.0cm> \xymatrix@R=0.2cm@C=.5cm{
*{s_{1}}\ar  @{-} [dr]  && \\ 
  & 
*{s_3} \ar @{-} [r]  &
*{\ \cdots\ } \ar @{-} [r]  &
*{s_n}
\\
*{s_2} \ar  @{-} [ur] 
}\endxy$ 
&
Identity
\\
$^2D_n$ ($n\geq 4$) & & Diagram  $\begin{cases} s_1 \leftrightarrow s_{2} \\  s_i \mapsto s_i \text{ ($i\geq 3$)} \end{cases}$
\\ &\\
\hline
$E_6$ 
& 
$ \xy<0.0cm,-0.0cm> \xymatrix@R=0.4cm@C=.4cm{
&&*{s_{2}}&\\
*{s_1} \ar  @{-} [r]   & 
*{s_3} \ar @{-} [r] &
*{s_{4}} \ar @{-} [u]  \ar @{-} [r]  &
*{s_5} \ar @{-} [r]  &
*{s_6} 
}\endxy
$
& Identity  \\
$^2E_6$ & & Diagram  $\begin{cases} s_1 \leftrightarrow s_6 \\ s_3 \leftrightarrow s_5 \\ s_i \mapsto s_i \text{ ($i=2,4$)}\end{cases}$
\\ &\\
\hline 
$E_7$ 
&
$ \xy<0.0cm,-0.0cm> \xymatrix@R=0.3cm@C=.3cm{
&&*{s_{2}}&\\
*{s_1} \ar  @{-} [r]   & 
*{s_3} \ar @{-} [r] &
*{s_{4}} \ar @{-} [u]  \ar @{-} [r]  &
*{s_5} \ar @{-} [r]  &
*{s_6} \ar @{-} [r]  &
*{s_7} 
}\endxy
$ 
& Identity  
\\ &\\
\hline 
$E_8$ 
&
$ \xy<0.0cm,-0.0cm> \xymatrix@R=0.3cm@C=.3cm{
&&*{s_{2}}&\\
*{s_1} \ar  @{-} [r]   & 
*{s_3} \ar @{-} [r] &
*{s_{4}} \ar @{-} [u]  \ar @{-} [r]  &
*{s_5} \ar @{-} [r]  &
*{s_6} \ar @{-} [r]  &
*{s_7} \ar @{-} [r]  &
*{s_8}
}\endxy
$ 
& Identity  
\\ &\\
\hline 
$F_4$ 
& 
$ \xy<0.0cm,-0.0cm> \xymatrix@R=-0.0cm@C=.5cm{
*{s_1} \ar  @{-} [r]   & 
*{s_2} \ar  @{-}^{4} [r]   & 
*{s_3} \ar  @{-} [r]   & 
*{s_4}
}\endxy
$
& Identity  
\\
$^2F_4$ & & Diagram  $s_i \mapsto s_{5-i}$ 
\\ &\\
\hline
$H_3$ &
$\xy<0.0cm,-0.0cm> \xymatrix@R=-0.0cm@C=.5cm{
*{s_1} \ar  @{-} [r]^{5}   & 
*{s_2} \ar @{-} [r] &
*{s_3}
}\endxy$ 
& Identity  
\\ &\\
\hline
$H_4$ & 
$\xy<0.0cm,-0.0cm> \xymatrix@R=-0.0cm@C=.5cm{
*{s_1} \ar  @{-} [r]^{5}   & 
*{s_2} \ar @{-} [r] &
*{s_3} \ar @{-} [r] &
*{s_4}
}\endxy$ 
& Identity  
\\ &\\
\hline
$I_2(m)$ ($m\geq 4$) 
& 
$\xy<0.0cm,-0.0cm> \xymatrix@R=-0.0cm@C=.5cm{
*{s_1} \ar  @{-} [r]^{m}   & 
*{s_2}
}\endxy$
& Identity  \\
$^2I_2(m)$ ($m\geq4$) & & Diagram  $s_i \mapsto s_{3-i}$
\\ &\\
\hline
\end{tabular}
\end{center}
All Coxeter diagrams  are labeled to coincide with the indexing conventions in {\tt Coxeter} \cite{Coxeter}.
The  types $BC_2$, $^2BC_2$, $G_2$, $^2G_2$ are omitted since they coincide with types  $I_2(m)$, $^2I_2(m)$ for $m=4,6$.
\end{table}

%\begin{table}
%\caption{Minimum nonzero coefficients in KL-type polynomials; see  Section \ref{compute-sect}}
%\label{mincoeff-P-tbl}
%\begin{center}
%{\small
%\begin{tabular}{r|rrrr}
%\hline
%Type & $P_{y,w}$ ($y,w \in \I$)  & $\Psig_{y,w}$ & $P^+_{y,w}$ & $P^-_{y,w}$ \\
%\hline
%$A_3$  & 1 & 1 & 1 & (all polynomials are zero)  \\
%$A_4$  & 1 & 1 & 1 &   (all polynomials are zero)  \\
%$A_5$ & 1 &1 &1 &1  \\
%$A_6$ & 1 &1 &1 &1  \\
%\hline
%$^2A_3$   & 1 & $-$1 & 1 &  1 \\
%$^2A_4$  & 1 & $-$1 & 1 & 1  \\
%$^2A_5$ & 1 & $-$1 & 1 & 1 \\
%$^2A_6$ & 1 & $-$2 & 1 & 1 \\
%\hline
%$BC_3$  & 1 & $-$1 & 1 &  1 \\
%$BC_4$   & 1 & $-$1 & 1 &  1 \\
%$BC_5$ &  1 & $-$3  & 1 & 1 \\
%$BC_6$ & 1& $-8$ & 1 &1  \\
%\hline
%$D_4$  & 1 & $-$2 & 1 &  1 \\
%$D_5$ & 1  & $-$3 & 1 & 1 \\
%$D_6$ & 1& $-12$ & 1&  1\\
%\hline
%$^2D_4$  & 1 & $-$1 & 1 &  1 \\
%$^2D_5$ & 1 & $-$2 & 1 & 1 \\
%$^2D_6$ &1& $-5$& 1& 1 \\
%\hline
%$E_6$ & 1 & $-10$ & 1 & 1 \\
%\hline
%$^2E_6$ & 1 & $-3$ & 1 & 1 \\
%\hline
%$F_4$  & 1 & $-$2 & 1 &  1  \\
%\hline
%$^2F_4$ & 1 & $-$1 & 1 &  1 \\
%\hline
%$H_3$  & 1 & $-$1 & 1 &  1 \\
%$H_4$  & 1 & $-$9 & 1 &  1 \\
%\hline
%$I_2(m)$, $m\geq 3$  & 1 & 1 & 1 & (all polynomials zero) \\
%\hline
%$^2I_2(m)$, $m\geq 3$   & 1 & 1 & 1 & (all polynomials zero) \\
%\hline
%\end{tabular}
%}
%\end{center}
%\end{table}

\begin{table}
\caption{Maximum nonzero coefficients in KL-type polynomials; see  Section \ref{conj-sect}}
\label{maxcoeff-P-tbl}
\begin{center}
{\small
\begin{tabular}{r|rrrrr}
\hline
Type & $P_{y,w}$ ($y,w \in \I$)  & $\Psig_{y,w}$ & $-\Psig_{y,w}$ & $P^+_{y,w}$ & $P^-_{y,w}$ \\
\hline
$A_1$, $A_2$, $A_3$  & 1 & 1 & $-1$	& 1 &  (all polynomials zero)  \\
$A_4$  & 2 & 2& $-1$	& 2 &   (all polynomials zero)  \\
$A_5$ & 4 &4 & $-1$	&4 & 1 \\
$A_6$ & 15 & 7& $-1$	& 11 & 4 \\
$A_7$ & 73 & 25 & $-1$ & 49 & 25 \\
$A_8$ & 362 & 82 & $-1$ & 222 & 140 \\
\hline
$^2A_2$   & 1 & 1& 	$-1$	& 1 &  1 \\
$^2A_3$   & 1 & 1& 	1	& 1 &  1 \\
$^2A_4$  & 2 & 1& 	1	& 1 &  1  \\
$^2A_5$ & 4 & 2 &	1	& 3 & 2 \\
$^2A_6$ & 15 & 3& 	2	& 8 & 7 \\
$^2A_7$ & 73 & 5& 	3	& 38 & 35 \\
$^2A_8$ & 460 & 12& 	6	& 232 & 228 \\
\hline
$BC_3$  & 1 & 1& 	1	& 1 &  1 \\
$BC_4$   & 5 & 3& 	1	&	 4 &  1  \\
$BC_5$ &  35 & 10& 3	& 21 & 14 \\
$BC_6$ &  454 & 48 &8	& 246 & 208 \\
\hline
$D_4$  & 4 & 3& 	2	& 3 &  2 \\
$D_5$ &  17 &8& 	3	& 11 & 6 \\
$D_6$ &  217 & 25& 12	& 121 & 96 \\
\hline
$^2D_4$  & 10 & 8 &	1& 7 &  2 \\
$^2D_5$ & 17 & 4&		2&10 & 7 \\
$^2D_6$ & 217 & 18& 	5&116 & 101 \\
\hline
$E_6$ & 581 & 54 &	10	& 293 & 288 \\
\hline
$^2E_6$ & 748 & 18& 3	& 374 & 374 \\
\hline
$F_4$  & 12 & 8& 	2	& 9 &  5 \\
\hline
$^2F_4$ & 12 & 2 &	1	& 6 &  6 \\
\hline
$H_3$  & 3 & 1 &	1	& 2 &  1 \\
$H_4$  & 5,116 & 213& 9	& 2,651 &  2,465 \\
\hline
%$I_2(m)$ ($ m\geq 4)$  & 1 & 1& $-1$	& 1 & (all polynomials zero) \\
%\hline
%$^2I_2(m)$ $(m \geq 4)$   & 1 & 1& $-1$	& 1 & (all polynomials zero) \\
%\hline
\end{tabular}
}
\end{center}
We obtained this data by running our extended version of  {\tt Coxeter}  \cite{MyCode} for the triples $(W,S,*)$ of the types listed. 
Our computations verify 
Properties \descref{A$'$} and \descref{B$'$}  in each of these types.
\end{table}

%\clearpage
\newpage

\begin{table}
\caption{Maximum nonzero coefficients in KL-type structure constants; see  Section \ref{conj-sect}}
\label{maxcoeff-h-tbl}
\begin{center}
{\footnotesize
\begin{tabular}{r|rrrrr}
\hline
Type & $\wt h_{x,y;z}$ ($x \in W$; $y,z \in \I$) & $h^\sigma_{x,y;z}$ & $-h^\sigma_{x,y;z}$   & $h^+_{x,y;z}$ & $h^-_{x,y;z}$ \\
\hline
$A_1$ & 2 & 1 & $-1$ & 1 & 1 \\
$A_2$ & 10 & 2 & $-1$ & 5 & 5\\
$A_3$  & 132 & 10 &$-1$ & 66 & 66\\
$A_4$  & 3,748 & 61 & $-1$  & 1,892 & 1,856\\
$A_5$ & 922,740 &912 & $-1$  & 461,826 & 460,914\\
$A_6$ & 179,487,027 & 20,367 & $-1$ & 89,753,697 & 89,733,330 \\
\hline
$^2A_2$ & 10 & 2 & 1 & 5 & 5 \\
$^2A_3$  & 132 & 7 &3 & 66 & 66\\
$^2A_4$ & 4,698 & 36 & 10  & 2,358 & 2,340\\
$^2A_5$ & 922,740 & 506 & 162 & 461,404 &461,336 \\
$^2A_6$ & 186,996,750 & 4,080 & 1,994 & 93,499,109 & 93,497,641 \\
\hline
$BC_2$ & 14 & 2 & 1 & 8 & 6\\ 
$BC_3$  & 905 & 28 &8 & 451 & 454 \\
$BC_4$  & 397,846 & 767 & 156 & 199,042 & 198,804 \\
$BC_5$ & 1,319,190,596 &42,248 &9,924 &659,608,306 &659,582,290  \\
\hline
$^2BC_2$ & 14 & 2 & 1 & 8 & 6 \\
\hline
$D_4$ & 42,384 & 246 & 85  & 21,226 & 21,225  \\
$D_5$ & 89,307,651 & 11,123 &3,319 & 44,652,166 & 44,655,485 \\
\hline
$^2D_4$ & 42,384 & 116 & 30 &  21,225 & 21,159 \\
$^2D_5$ & 89,307,651 &4,748 &1,538 & 44,655,112 & 44,652,539 \\
\hline
$F_4$ & 108,380,588 & 8,995 & 2,007 & 54,192,072 & 54,188,516  \\
\hline
$^2F_4$ & 108,380,588 & 2,600 & 86 & 54,191,594 & 54,188,994 \\
\hline 
$G_2$ & 22 & 2 & 2 & 12 & 10 \\
\hline
$^2G_2$ & 22 & 2 & 1 & 12 & 10 \\
\hline
$H_3$  & 15,676 & 106 &49& 7,870 & 7,806 \\
$H_4$ &  59,133,414,193,112,056 & 467,325,554 & 60,353,800 & 29,566,707,126,594,414 & 29,566,707,066,517,642\\
\hline
\end{tabular}

%\bigskip
%
%
%\begin{tabular}{r|rr}
%\hline
%Type & $h^+_{x,y;z}$  ($x \in W$; $y,z \in \I$) & $h^-_{x,y;z}$ \\
%\hline
%$A_1$ & 1 & 1 \\
%$A_2$ & 5 & 5 \\
%$A_3$  &66 & 66\\
%$A_4$  & 1,892 & 1,856 \\
%$A_5$ &461,826 &460,914 \\
%$A_6$ & 89,753,697 & 89,733,330\\
%\hline
%$^2A_2$ & 5 & 5 \\
%$^2A_3$  &66 & 66\\
%$^2A_4$ & 2,358 & 2,340 \\
%$^2A_5$ & 461,404 &461,336 \\
%$^2A_6$ & 93,499,109 & 93,497,641\\
%\hline
%$BC_2$ & 8 & 6\\
%$BC_3$  &451 & 454\\
%$BC_4$  & 199,042 & 198,804\\
%$BC_5$ &659,608,306 &659,582,290 \\
%\hline
%$D_4$ & 21,226 & 21,225 \\
%$D_5$ &44,652,166 & 44,655,485 \\
%\hline
%$^2D_4$  & 21,225 & 21,159 \\
%$^2D_5$ &44,655,112 & 44,652,539 \\
%\hline
%$F_4$ & 54,192,072 & 54,188,516 \\
%\hline
%$^2F_4$ & 54,191,594 & 54,188,994 \\
%\hline
%$H_3$  & 7,870 & 7,806\\
%$H_4$ & 29,566,707,126,594,414 & 29,566,707,066,517,642\\
%\hline
%\end{tabular}
}
\end{center}
We obtained this data by running our extended version of  {\tt Coxeter}  \cite{MyCode} for the triples $(W,S,*)$ of the types listed. 
Our computations verify 
Properties \descref{C$'$} and \descref{D$'$}  in each of these types.
\end{table}

\end{document}